\newif\ifArXiV
\newtheorem{theorem}{Theorem}
\newtheorem{lemma}[theorem]{Lemma}
\newtheorem{corollary}[theorem]{Corollary}
\newtheorem{observation}[theorem]{Observation}
\theoremstyle{remark}
\newtheorem{examplex}[theorem]{Example}
\let\oldnl\nl
\newcommand{\nonl}{\renewcommand{\nl}{\let\nl\oldnl}}
\newcommand{\R}{\mathbb{R}}
\DeclareMathOperator{\st}{s.t.}
\newcommand{\obj}{z}
\newcommand{\loc}{J}
\newcommand{\cus}{I}
\newcommand{\pts}{N}
\newcommand{\feasSol}{P}
\newcommand{\mytag}[1]{(\hypertarget{#1}{\mathrm{#1}})}
\newcommand{\myref}[1]{\textnormal{(\hyperlink{#1}{#1})}}
\newcommand{\APCP}{d-$\alpha$-$p$CP\xspace}
\newcommand{\GAPCP}{$\alpha$-$p$CP\xspace}
\newcommand{\PCP}{d-$p$CP\xspace}
\newcommand{\GPCP}{$p$CP\xspace}
\newcommand{\BC}{B\&C\xspace}
\newcommand{\newOpt}{^\circ}
\newcommand{\yNewOpt }{y^{\circ}}
\newcommand{\pNewOpt }{p^{\circ}}
\newcommand{\yNewOpti}{y^{\circ,i}}
\newcommand{\blue}[1]{{\color{black}#1}}
\newcommand{\elli}[1]{{\color{black}#1}}
\newcommand{\green}[1]{{\color{black}#1}}
\newcommand{\orange}[1]{{\color{black}#1}}
\newenvironment{frontmatter}{}{}
\newenvironment{keyword}{\small \textbf{Keywords:}}{}
\let\address\affil
\begin{document}
		\begin{frontmatter}
			\title{
				Exact solution approaches for the discrete
				\texorpdfstring{$\alpha$}{alpha}-neighbor
				\texorpdfstring{$p$}{p}-center problem}
			%
			%
			
			\ifArXiV
			\author[1]{Elisabeth Gaar\thanks{elisabeth.gaar@jku.at}}
			\author[1,2]{Markus Sinnl\thanks{markus.sinnl@jku.at}}
			\affil[1]{Institute of Production and Logistics Management, 
			Johannes Kepler University Linz, Linz, Austria}
			\affil[2]{JKU Business School, Johannes Kepler University 
			Linz, Linz, Austria}			
			\date{}
			\maketitle
			
			\else
			
			\author[jku]{Elisabeth Gaar}
			\ead{elisabeth.gaar@jku.at}
			\author[jku,jkubus]{Markus Sinnl}
			\ead{markus.sinnl@jku.at}
			\address[jku]{Institute of Production and Logistics Management, 
			Johannes Kepler University Linz, Linz, Austria}
			\address[jkubus]{JKU Business School, Johannes Kepler University 
				Linz, Linz, Austria}	
					
			\fi

\begin{abstract}
The discrete $\alpha$-neighbor $p$-center problem (\APCP) is an emerging 
variant of 
the  
classical $p$-center problem which recently got attention in 
literature. In this problem, we are given a discrete set of points and we need 
to locate 
$p$ facilities on these points in such a way that the maximum distance 
between each point where no facility is located and its $\alpha$-closest 
facility is minimized. The 
only existing algorithms in literature for solving the \APCP are 
approximation algorithms and two recently proposed heuristics.

In this work, we present two integer programming formulations for the \APCP,  
together with lifting of inequalities, valid inequalities, 
inequalities that do not 
change the optimal objective function value and variable fixing procedures. We 
provide 
theoretical results on the strength of the formulations and convergence results 
for the lower bounds obtained after applying the lifting procedures or the 
variable fixing procedures in an iterative fashion. Based on our 
formulations 
and theoretical results, we develop branch-and-cut (\BC) algorithms, which are 
further enhanced with a starting heuristic and a primal 
heuristic. 

We evaluate the effectiveness of our \BC algorithms using instances from 
literature. Our algorithms are able to solve 116 out of 194 instances from  
literature to proven optimality, with a runtime of under a minute for most of 
them. By doing so, we also provide improved solution values for 116 instances. 

\begin{keyword}
location science; $p$-center problem; integer programming formulation; min-max 
objective
\end{keyword}
\end{abstract}
\end{frontmatter}


\section{Introduction}

The \emph{$\alpha$-neighbor $p$-center problem} (\GAPCP), proposed by 
\citet{krumke1995generalization}, 
is an emerging variant 
of the classical $p$-center problem (\GPCP) \citep{hakimi1965} which recently 
got 
attention in literature 
\citep{chen2013optimal,callaghan2019optimal,sanchez2022grasp}. In this problem, 
we are given a set of points and we need to locate $p$ facilities. The goal is 
to locate the facilities in such a way that the maximum distance between each 
point and its $\alpha$-closest facility is minimized. We note that both 
a continuous and discrete version of the \GAPCP exist. In the continuous 
version, the facilities can be located anywhere on the plane, while in the 
discrete version the given points are also the potential facility locations.  
In the discrete version all the points where a facility gets located are not 
considered in the objective function. 
The \GAPCP 
can be seen as a robust variant of the \GPCP, where customers do not need to go 
to their closest facility, but also have additional $\alpha-1$ facilities 
nearby. Thus, the \GAPCP can be \blue{a} useful modeling approach for 
applications 
which are traditionally modeled as \GPCP, such as emergency service locations 
and relief actions in humanitarian crisis 
\citep{calik2013double,lu2013robust,jia2007modeling}, where robust solutions 
are highly relevant.

A formal definition of the discrete \GAPCP (\APCP) is as follows 
\citep{krumke1995generalization,sanchez2022grasp, mousavi2023exploiting}: We 
are given a set of points 
$\pts$, a positive integer $p < |\pts|$, and a positive 
integer $\alpha \leq p$. For each pair of points $i,j\in 
\pts$ we are given a distance $d_{ij}\geq 0$. A feasible solution consists of a 
subset $\feasSol \subseteq \pts$ of $|\feasSol|=p$ facilities, indicating which 
facilities are 
opened. Given a feasible solution
$\feasSol$, i.e., a set of open facilities $\feasSol$, the set of demand points 
is defined as 
$\pts \setminus \feasSol$, i.e., the set of demand points depends on the 
chosen feasible solution and consists of all points that 
are not opened. The 
$\alpha$-distance $d_\alpha(\feasSol,i)$ for a feasible solution $\feasSol$ and 
a given 
demand point $i \in 
\pts \setminus \feasSol$ 
is defined as
\begin{equation*}
 d_\alpha(\feasSol,i) =\min_{A\subseteq \feasSol, |A|=\alpha}\max_{j \in 
 A}\{d_{ij}\}, 
\end{equation*}
so the $\alpha$-distance $d_\alpha(\feasSol,i)$ gives the distance 
of 
$i$ to 
the 
$\alpha$-nearest open facility for the feasible solution $\feasSol$. The 
objective 
function 
value $f_\alpha(\feasSol)$ 
of a feasible solution $\feasSol$ is defined as 
\begin{equation*}
f_\alpha(\feasSol)=\max_{i \in \pts \setminus \feasSol}  d_\alpha(\feasSol,i). 
\end{equation*}
Using these definitions, the \APCP can be formulated as
\begin{equation*}
\min_{\feasSol\subseteq \pts,|\feasSol|=p} f_\alpha(\feasSol).
\end{equation*}

We note that the discrete \GPCP (\PCP, also known as \emph{vertex} \GPCP) is 
obtained by setting $\alpha=1$, if we assume that the 
distances $d_{ii}=0$ for all $i \in \pts$, i.e., if we assume that each demand 
point $i$ is covered 
if the same facility $i$ is opened. Moreover, 
instead of 
assuming that distances between all pairs of points $i,j$ are given, the 
problem can also be defined on a (non-complete) graph and the distances are 
defined as the shortest-path distances on this graph.  With respect to this, 
\cite{kariv1979algorithmic} show that the \PCP is NP-hard in general, 
but there 
are some classes of graphs such as trees, where the problem can be solved in 
polynomial time \citep{jeger1985algorithms}.

In this work, we present exact solution approaches for solving 
the \APCP. So far, solution approaches for the \GAPCP focused mostly on 
the continuous version of the problem. For this version, an iterative exact 
algorithm based on the connection to a version of the set cover problem is 
proposed in \cite{chen2013optimal}. We note that for the classical \GPCP such 
set cover-based approaches are well established (for both the continuous and 
discrete version of the problem), going back to the seminal work of 
\citet{minieka1970}. Recent set-cover based approaches for the classical 
\GPCP include \citet{chen2009,contardo2019scalable}. 

In \cite{callaghan2019optimal} such a set cover-based 
approach is used for the continuous version of both the \GPCP and the \GAPCP. 
For the 
\APCP the only existing solution approaches in 
literature are 
approximation algorithms 
\citep{chaudhuri1998p,khuller2000fault,krumke1995generalization} and heuristics 
\citep{sanchez2022grasp, 
mousavi2023exploiting}. More details on these approaches and on the 
\GPCP and other related problems are given in 
Section~\ref{sec:litreview}.

\subsection{Contribution and outline}

In this work, we present two different integer programming formulations for 
the \APCP. We also present valid inequalities, (iterative) lifting procedures 
for some of the inequalities, inequalities that do not change the optimal 
objective function value and 
(iterative) variable fixing procedures. We denote the inequalities that do not 
change the optimal objective function value as \emph{optimality-preserving} 
inequalities. The  
lifting procedures are based on lower bounds to the problem and can be viewed 
as extension of previous results for the \PCP in 
\citet{gaar2022scaleable}. We 
show that the lower bounds converge to a certain fractional set cover solution 
when 
applying the lifting procedure or the variable fixing procedure in an 
iterative fashion. We also show that we can obtain the optimal objective 
function value of the 
semi-relaxation (in this semi-relaxation, one set of binary variables of our 
formulation is kept binary and the other set of binary variables is relaxed) of 
our second formulation in 
polynomial time using iterative 
variable fixing. This can be seen as an extension 
of a result obtained by \citet{elloumi2004} for the \PCP and a 
fault-tolerant 
version of the \GPCP. Moreover, we provide 
polyhedral comparisons between the formulations. 

Based on these formulations and our theoretical results, we develop 
branch-and-cut (\BC) algorithms to solve the \APCP. These algorithms 
also contain a starting heuristic and a primal heuristic. We evaluate the  
effectiveness of our \BC algorithms using instances also used in 
\citet{sanchez2022grasp} and \citet{mousavi2023exploiting}. Our algorithms are 
able to solve 116 out of 194 instances from literature to proven optimality. We 
also provide improved solution values for 116 out of these 194 instances. Note 
that these instances are not all the same as the instances for 
which we manage to prove \blue{optimality}, as for some instances, the 
heuristics 
from literature already found the optimal solution (but of course could not 
prove optimality, as they are heuristics).

The paper is structured as follows:
In the remainder of this section, we discuss previous and related work in more 
detail. 
Section \ref{sec:formulationD} presents our first integer programming 
formulation 
together with valid inequalities, lifted versions of inequalities, 
optimality-preserving inequalities and variable fixings. 
Section~\ref{sec:formulationE} contains the same for our second formulation. In 
Section 
\ref{sec:polyhedral}, we provide a polyhedral comparison of the formulations 
and convergence results for the lower bounds after applying the lifting 
procedure or the variable fixing procedure in an iterative fashion. In 
Section \ref{sec:implementation} we describe the implementation details of our 
\BC algorithm, including the starting heuristic and the primal heuristic and 
separation routines. In Section~\ref{sec:results} the computational study is 
presented. Finally, Section \ref{sec:conclusion} concludes the paper.

\subsection{Previous and related work}
\label{sec:litreview}

The \GPCP is a fundamental problem in location science, dating 
back to 1965 \citep{hakimi1965}, which has spawned many variations over the 
years, see, e.g., the book-chapter by \cite{calik2019p}. 

The seminal work of \citet{minieka1970} presented the first exact approach for 
the \GPCP and also created a blueprint of a solution algorithm which over the 
years many other algorithms for either the \GPCP or also variants of it 
including the continuous \GAPCP, used as a starting point. \citet{minieka1970} 
showed that the question whether there exists a feasible solution to the \GPCP 
with a given objective function value can be posed as a certain set 
cover problem. As a consequence the \GPCP can be solved by iteratively solving 
such set cover problems. Over the years, many authors 
\citep{garfinkel1977,ilhan2001,ilhan2002,alKhedhairi2005,caruso2003,chen2009,contardo2019scalable}
 have expanded on this 
idea to present algorithms to solve the \GPCP.

Aside from these set cover-based approaches, there also exist 
several integer programming formulations for solving the \PCP to proven 
optimality. The classical textbook 
formulation of the 
problem 
(see e.g., \citep{daskin2013network}) uses facility opening variables and 
assignment variables and is known to have a bad linear programming relaxation 
(see, e.g., \citep{snyder2011fundamentals}). In \citet{elloumi2004} an 
alternative integer programming formulation is presented and the authors show 
that there are instances where the linear relaxation bounds are provably better 
than the bounds obtained by the classical textbook formulation. In 
\citet{ales2018} a modification of this formulation is presented. Regarding our 
second formulation, which we present in Section \ref{sec:formulationE}, we note 
that there exists a variant of the 
\APCP, in which 
every point $i$ must be covered $\alpha$-times, even if there is a facility 
opened at $i$. This variant is sometimes called \emph{fault-tolerant \GPCP} 
(see, e.g., Section 6 of \citet{elloumi2004}), although this name 
	has 
also been used for other 
problems in literature, including \APCP. In Section 6 of 
\citet{elloumi2004} a 
formulation for the fault-tolerant \GPCP extending their 
formulation for the \PCP is sketched. Our second formulation,  
can be seen as an adaption 
of this formulation, taking also into account the modification proposed in 
\citet{ales2018}. In \citet{elloumi2004} it is also shown that a so-called 
semi-relaxation of their formulation, where one of the two sets of binary 
variables is relaxed, can be solved in polynomial time. They also briefly 
discuss such a result for their formulation of the fault-tolerant \GPCP. We 
\blue{prove} a similar result for our second formulation for the \APCP in 
Section 
\ref{sec:comparison:e}. In \citet{calik2013double} another formulation for the  
\PCP is presented and the authors show that the linear programming 
relaxation 
of it has the same strength as the relaxation of the formulation of 
\citet{elloumi2004}.

In \citet{gaar2022scaleable} the classical textbook 
formulation was used as starting point for a projection-based approach, which 
projected out the assignment variables to obtain a new integer programming 
formulation for the 
 \PCP. Moreover, an iterative lifting scheme for the inequalities in 
the new formulation was presented. This lifting scheme is based on the lower 
bound obtained from solving the linear programming relaxation, in which then 
the lifted inequalities are included and everything is resolved in an iterative 
fashion. \citet{gaar2022scaleable} showed that this procedure converges and the 
lower bound at convergence is the same lower bound as the one of the 
semi-relaxation considered in \citet{elloumi2004}. Furthermore, 
\citet{gaar2022scaleable} also showed that the solution at convergence solves a 
certain fractional set cover problem. Our first formulation for the \APCP, 
which we present in Section \ref{sec:formulationD}, is 
based on the classical textbook formulation for the \PCP and is also suitable 
for the ideas of \citet{gaar2022scaleable} regarding lifting.

For \APCP the only existing algorithms with computational 
results
are the GRASP proposed by \citet{sanchez2022grasp} and the local search by 
\citet{mousavi2023exploiting}. Aside from these heuristics, there are also 
works on approximation algorithms 
\citep{chaudhuri1998p,khuller2000fault,krumke1995generalization} which do not 
contain computations. The best possible approximation factor of two is obtained 
by the algorithms presented in \citet{chaudhuri1998p, khuller2000fault} under 
the condition that the distances fulfill the triangle \elli{inequalities}. We 
note that 
in 
principle set cover-based approaches such as the one of \citet{chen2013optimal} 
also work for the \APCP, but \citep{chen2013optimal} focuses on the 
continuous \GAPCP and presents no computations for the \APCP.

\section{Our first formulation \label{sec:formulationD}}

In this section we present our first integer programming formulation for 
the 
\APCP. First, we describe the formulation in 
Section~\ref{sec:formulationD:formulation}. Then we derive valid 
inequalities, valid inequalities that are based on lower bounds, and 
optimality-preserving  inequalities in Section~\ref{sec:formulationD:cuts}. 
Next, we detail conditions which allow to fix some of the variables in the 
linear relaxation in 
Section~\ref{sec:formulationD:fixing}.
Finally, we provide some insight on what happens if we relax one set of binary 
variables of our formulation in 
Section~\ref{sec:formulationD:integrality}.

\subsection{Formulation}\label{sec:formulationD:formulation}

Our first integer programming formulation of the \APCP can be viewed as 
extension 
of a 
classical formulation of the \PCP (see, e.g., \citep{daskin2000} and  
\citep{gaar2022scaleable}). We refer to this classical formulation of the 
 \PCP as $\mytag{PC1}$ following the notation of 
\citet{gaar2022scaleable}. \blue{The formulation $\mytag{PC1}$ as well as any 
other formulations of the \PCP which are mentioned in the remainder of this 
work can be found in Appendix~\ref{sec:app1}.}

Let the binary 
variables $y_j$ for all $j\in \pts$ indicate whether a facility 
is 
opened at point $j$. Let the binary variables $x_{ij}$ for all $i,j \in \pts$ 
with $i\neq j$ indicate whether the 
point $i$ 
is assigned to the open facility $j$. Let the continuous variables $z$ measure 
the 
distance in the objective function. Then the \APCP can be 
formulated as 
\begin{subequations}
	\begin{alignat}{3}
	\mytag{APC1} \qquad
	& \min & \obj \phantom{iiiii} \label{pc1:z}  \\ 
	& \st~  & \sum_{j \in \pts} y_j &= p \label{pc1:sumy} \\       
	&& \sum_{j \in \pts\setminus \{i\}} x_{ij} & =  \alpha(1-y_i) \qquad && 
	\forall i \in \pts \label{pc1:sumx} \\
	&& x_{ij} &\leq y_j && \forall i,j \in \pts, i \neq j \label{pc1:xy}\\   
	&& d_{ij} x_{ij} & \leq  \obj && \forall i, j \in \pts, i \neq j 
	\label{pc1:dx}\\
	&& x_{ij} &\in  \{0,1\} \qquad&& \forall i , j \in \pts\label{pc1:xbin}, i 
	\neq j  	
	\\
	&& y_{j} &\in  \{0,1\} && \forall j \in \pts \label{pc1:ybin}	
	\\
	&& \obj & \in \mathbb{R}.	\label{pc1:zVar}
	\end{alignat}
\end{subequations}

The constraints \eqref{pc1:sumy} ensure that exactly $p$ facilities are opened. 
The constraints \eqref{pc1:sumx} make sure that for each point $i \in \pts$, 
the 
point is either used for opening a facility, or it is assigned to   
$\alpha$ other open facilities. The constraints \eqref{pc1:xy} ensure that if a 
point 
$i$ 
is assigned to a facility at point $j$, then the facility at point $j$ is 
opened. 
The constraints \eqref{pc1:dx} ensure that $z$ takes at least the value of the 
distance from $i$ to $j$ if $i$ is assigned to $j$. Thus, $z$ will take at 
least the maximum distance for assigning $i$ to $\alpha$ facilities, since 
constraints \eqref{pc1:sumx} ensure the assignment of $i$ to $\alpha$ 
facilities in case it is not opened. The 
objective function \eqref{pc1:z} minimizes~$z$, i.e., it minimizes the maximum 
assignment distance. 
The formulation \myref{APC1} has $O(|\pts|^2)$ variables and $O(|\pts|^2)$ 
constraints.

Note that in the formulation \myref{PC1} for the classical formulation of the 
 \PCP, the constraint 
\eqref{pc1:dx} is included in an aggregated fashion as $ \sum_{j \in 
\pts} 
d_{ij} x_{ij}  \leq  \obj$ for all $i \in 
\pts$. Furthermore, in the classical  \PCP also open facilities are 
included in 
the demand points. Thus, in \myref{PC1} the 
variables $x_{ij}$ are required also for $i = j$, and the right hand-side is 
$\alpha$ and not 
$\alpha(1-y_i)$ in~\eqref{pc1:sumx}. 

\subsection{Strengthening inequalities}\label{sec:formulationD:cuts}
Due to the fact that \myref{PC1} is typically considered to have bad linear 
programming bounds (see, e.g., \citep{snyder2011fundamentals}) for the  
\PCP, it 
could be expected that also \myref{APC1} has a linear relaxation that provides 
a poor bound. In fact, we confirmed this in preliminary computations, see 
also Section \ref{sec:ingredients}. In Section \ref{sec:comparison:d} we 
present some theoretical results on the effect of adding the inequalities 
described in this section to \myref{APC1}.

\subsubsection{Valid inequalities}

The next theorem presents two sets of valid inequalities for \myref{APC1}.

\begin{theorem}\label{thm:validInequDaskin}
The inequalities
\begin{subequations}
\begin{alignat}{3}
	\sum_{j \in \pts\setminus \{i\}} d_{ij} x_{ij} & \leq  \alpha \obj \qquad 
	&&  
	\forall i \in \pts
	\label{cut:sumdijxij_alphaz}\\
		y_i + x_{ij} & \leq  1 &&  
	\forall i,j \in \pts, i \neq j
	\label{cut:yixij}
\end{alignat}
\end{subequations}
are valid inequalities for the formulation \myref{APC1} for the \APCP, i.e., 
when adding \eqref{cut:sumdijxij_alphaz} and  
\eqref{cut:yixij} to \myref{APC1}, the set of feasible solutions 
does not change.
\end{theorem}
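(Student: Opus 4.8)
The plan is to verify both families of inequalities directly on an arbitrary integer-feasible point of \myref{APC1}, since the claim only concerns the set of feasible solutions, so I may freely use that $x$ and $y$ are binary. Throughout I will rely on the observation that $\obj \geq 0$ in every feasible solution: indeed \eqref{pc1:dx} gives $\obj \geq d_{ij}x_{ij} \geq 0$ because all distances are nonnegative and $x_{ij}\geq 0$. This matters because $\obj\ge 0$ is not imposed as a variable bound in \eqref{pc1:zVar} and will be needed for \eqref{cut:sumdijxij_alphaz}.

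For \eqref{cut:yixij}, I would split on the value of $y_i$. If $y_i = 1$, then the right-hand side of \eqref{pc1:sumx} equals $\alpha(1-y_i)=0$, and since the $x_{ij}$ are nonnegative this forces $x_{ij}=0$ for every $j\in\pts\setminus\{i\}$; hence $y_i + x_{ij} = 1$. If $y_i = 0$, then $y_i + x_{ij} = x_{ij} \leq 1$ by \eqref{pc1:xbin}. In either case the inequality holds.

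For \eqref{cut:sumdijxij_alphaz}, fix $i\in\pts$. The key step is that, because $x_{ij}\in\{0,1\}$, constraint \eqref{pc1:dx} implies the stronger-looking per-term bound $d_{ij}x_{ij}\leq \obj\, x_{ij}$ for each $j\neq i$: it is the trivial $0\le 0$ when $x_{ij}=0$ and exactly \eqref{pc1:dx} when $x_{ij}=1$. Summing over $j\in\pts\setminus\{i\}$ and then applying \eqref{pc1:sumx} yields
\[
\sum_{j\in\pts\setminus\{i\}} d_{ij}x_{ij} \;\leq\; \obj \sum_{j\in\pts\setminus\{i\}} x_{ij} \;=\; \obj\,\alpha(1-y_i) \;\leq\; \alpha\obj,
\]
where the final inequality uses $\obj\ge 0$ and $y_i\ge 0$. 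Alternatively one can argue by cases: if $y_i=1$ the left-hand side is $0\le\alpha\obj$ as above, while if $y_i=0$ then exactly $\alpha$ of the $x_{ij}$ equal $1$, and summing \eqref{pc1:dx} over those $\alpha$ indices gives $\sum_{j\in\pts\setminus\{i\}} d_{ij}x_{ij}\le\alpha\obj$ directly.

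I do not expect a genuine obstacle: the only two points needing care are extracting $\obj\ge0$ from \eqref{pc1:dx} rather than from the variable declaration, and the use of the integrality of $x$ to pass from \eqref{pc1:dx} to the per-term product inequality — which is legitimate precisely because the theorem asserts equality of the integer feasible sets, not of the LP relaxations.
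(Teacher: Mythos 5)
Your proof is correct and follows essentially the same route as the paper: both arguments verify the inequalities directly on integer-feasible points, using \eqref{pc1:sumx} to force $x_{ij}=0$ when $y_i=1$ for \eqref{cut:yixij}, and bounding each of the $\alpha$ assigned terms by $\obj$ via \eqref{pc1:dx} for \eqref{cut:sumdijxij_alphaz}. Your explicit derivation of $\obj\ge 0$ from \eqref{pc1:dx} is a small point of care that the paper leaves implicit, but it does not change the substance of the argument.
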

\begin{proof}
	Clearly \eqref{cut:sumdijxij_alphaz} holds for any feasible 
	solution for \myref{APC1}, as in this case 
	$\sum_{j \in \pts\setminus \{i\}} d_{ij} x_{ij}$ is either zero (in case 
	$i$ is opened) or the sum of the 
	distances of 
	the closest, second-closest, \dots, $\alpha$-closest facility to point $i$, 
	which is at most $\alpha$ times the distances of the $\alpha$-closest 
	facility measured as $z$.
	
	Furthermore, it is obvious that \eqref{cut:yixij} holds for any  
	feasible solution for \myref{APC1}, as $i$ can not be assigned to any point 
	$j 
	\in \pts\setminus \{i\}$ if $i$ is opened.
\end{proof}

\subsubsection{Valid inequalities based on lower bounds}
Given a lower bound on the optimal objective function value of the \APCP, the 
inequalities 
\eqref{pc1:dx} and \eqref{cut:sumdijxij_alphaz} can be lifted, as we show next. 
The lifting is based on a similar idea recently proposed in 
\cite{gaar2022scaleable} for the  \PCP.

\begin{theorem}\label{thm:liftingValid}
	Let $LB$ \blue{be} a lower bound on the optimal objective function value 
	of the \APCP. 
	Then
	\begin{subequations}
		\begin{alignat}{3}
		LB y_i +\max\{LB,d_{ij}\} x_{ij} &\leq z &&  \forall i,j \in \pts, i 
		\neq j 
		\label{pc1:dxlifted}\\
		\max\{LB,d_{ij}\} x_{ij} &\leq z &&  \forall i,j \in \pts, i \neq j 
		\label{pc1:dxliftedRelax} \\
		\alpha LB y_i +\sum_{j \in \pts \setminus \{i\}} \max\{LB,d_{ij}\} 
		x_{ij} & 
		\leq  \alpha \obj \qquad &&  \forall i \in \pts
		\label{cut:sumdijxij_alphazlifted}
		\end{alignat}
	\end{subequations}
	are valid inequalities for the formulation \myref{APC1} for the \APCP, 
	i.e., when adding 
	\eqref{cut:sumdijxij_alphazlifted}, \eqref{pc1:dxlifted} and 
	\eqref{pc1:dxliftedRelax} to 
	\myref{APC1}, the set of feasible solutions does not change.
\end{theorem}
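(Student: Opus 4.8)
My plan is to reduce the whole statement to one structural observation about \myref{APC1}: in every feasible solution $(x,y,z)$ of \myref{APC1} one has $z\ge LB$ (and trivially $z\ge 0$, since \eqref{pc1:dx} forces $z\ge d_{ij}x_{ij}\ge 0$). I would establish this observation first. Let $\feasSol=\{j\in\pts: y_j=1\}$; by \eqref{pc1:sumy} this is a set of exactly $p$ facilities, hence a feasible solution of the \APCP, and since $p<|\pts|$ the set of demand points $\pts\setminus\feasSol$ is nonempty. Fix any demand point $i\in\pts\setminus\feasSol$. Since $y_i=0$, constraint \eqref{pc1:sumx} together with $x_{ij}\in\{0,1\}$ gives that exactly $\alpha$ of the variables $x_{ij}$, $j\ne i$, equal $1$, and by \eqref{pc1:xy} every such $j$ lies in $\feasSol$; call this $\alpha$-element set $A_i\subseteq\feasSol$. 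By \eqref{pc1:dx} we have $z\ge d_{ij}$ for every $j\in A_i$, hence $z\ge\max_{j\in A_i}d_{ij}\ge\min_{A\subseteq\feasSol,|A|=\alpha}\max_{j\in A}d_{ij}=d_\alpha(\feasSol,i)$. Taking the maximum over all demand points yields $z\ge f_\alpha(\feasSol)\ge\min_{\feasSol'\subseteq\pts,|\feasSol'|=p}f_\alpha(\feasSol')$, and since $LB$ is a lower bound on this optimum, $z\ge LB$.

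With the observation in hand, I would verify the three inequality families by a case split on the binary value of $y_i$. If $y_i=1$, then \eqref{pc1:sumx} and nonnegativity of the $x_{ij}$ force $x_{ij}=0$ for all $j\ne i$; hence the left-hand side of \eqref{pc1:dxlifted} equals $LB\le z$, that of \eqref{pc1:dxliftedRelax} equals $0\le z$, and that of \eqref{cut:sumdijxij_alphazlifted} equals $\alpha LB\le\alpha z$. If $y_i=0$, then for each of the (exactly $\alpha$) indices $j$ with $x_{ij}=1$ we have both $d_{ij}\le z$ by \eqref{pc1:dx} and $LB\le z$ by the observation, so $\max\{LB,d_{ij}\}\le z$; the $y_i$-terms vanish, so \eqref{pc1:dxlifted} and \eqref{pc1:dxliftedRelax} reduce to $\max\{LB,d_{ij}\}x_{ij}\le z$ (immediate since $x_{ij}\in\{0,1\}$), and summing the $\alpha$ active terms gives $\sum_{j\in\pts\setminus\{i\}}\max\{LB,d_{ij}\}x_{ij}\le\alpha z$, which is \eqref{cut:sumdijxij_alphazlifted}.

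The only non-routine part is the observation $z\ge LB$; the remaining case analysis is just arithmetic. The point I would stress is that, unlike \eqref{cut:yixij} in Theorem~\ref{thm:validInequDaskin}, these inequalities are not "obviously" valid: $z$ is a free continuous variable, so the coefficient $LB$ on $y_i$ can be charged to $z$ only after arguing that any feasible point of \myref{APC1} encodes an \APCP solution whose objective value lower-bounds $z$, and this is precisely the place where the hypothesis $LB\le\mathrm{OPT}$ enters. I would also remark in passing that these inequalities dominate \eqref{pc1:dx} and \eqref{cut:sumdijxij_alphaz} (recover the latter two by setting $LB=0$), which explains calling them "lifted" versions.
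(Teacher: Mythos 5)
Your proof is correct and follows essentially the same route as the paper's: a case split on the binary value of $y_i$ (using that \eqref{pc1:sumx} forces at most one of $y_i$ and $x_{ij}$ to be one), combined with the fact that any feasible $(x,y,z)$ of \myref{APC1} satisfies $z\ge LB$. The only difference is that you spell out in full the step the paper dismisses as ``clearly a valid lower bound for $z$'', namely that $z\ge f_\alpha(\feasSol)\ge LB$ for the solution $\feasSol$ encoded by $y$ --- a worthwhile clarification, but not a different argument.
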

\begin{proof}	
	For the inequalities \eqref{pc1:dxlifted}, we note that due to constraints 
	\eqref{pc1:sumx} at most one of $y_i$ and $x_{ij}$ can take the value one 
	in 
	any 
	feasible solution of \myref{APC1}. Thus, the left hand-side can be at most 
	$\max\{LB,d_{ij}\}$, which is clearly a valid lower bound for $z$.
	
	Clearly the inequalities~\eqref{pc1:dxliftedRelax} are just a relaxation of 
	\eqref{pc1:dxlifted} and therefore also valid.
	The validity of the inequalities \eqref{cut:sumdijxij_alphazlifted} follows 
	from combining the arguments from the proof of the validity of inequalities 
	\eqref{cut:sumdijxij_alphaz} with the proof for the validity of the 
	inequalities \eqref{pc1:dxlifted}.
\end{proof}
Theorem~\ref{thm:liftingValid} allows us to add new valid inequalities to the 
linear relaxation of \myref{APC1}, as soon as we have a lower bound $LB$.
We present an iterative scheme exploiting this fact in 
	Section~\ref{sec:comparison:d}, where we also analyze the convergence 
	behavior of this scheme.

\subsubsection{Optimality-preserving inequalities}

Next we consider optimality-preserving inequalities. These inequalities may cut 
off some feasible solutions of \myref{APC1}, but do not change the optimal 
objective function value. In other words, there exists at least one optimal 
solution to \myref{APC1} which fulfills all these inequalities.

To present the inequalities, 
let 
$S_{ij} = 
\{j' \in \pts: (d_{ij'} < d_{ij}) \text{ or } (d_{ij'} = 
d_{ij} 
\text{ and } j' < j)\}$, i.e., $S_{ij}$ is the set of points $j'$ such 
that $j'$ is closer to $i$ than $j$, or such that $j'$ and $j$ are  
at the same distance to $i$ and $j'$ has a smaller index than~$j$. Thus, for 
any point $i$, the 
sets $S_{ij}$ induce an ordering of all points according to their distance to 
$i$ and their index. We denote this order with $\sigma_i$. 
\begin{theorem}\label{thm:optCutDaskin}
	The inequalities
	\begin{subequations}
		\begin{alignat}{3}
		\sum_{j \in \pts_\alpha} y_{j} + \sum_{j \in \pts \setminus  
			\bigcup_{j' \in \pts_\alpha} (S_{ij'} \cup \{i,j'\} ) } x_{ij} 
		& \leq \alpha \qquad  && \forall i \in \pts, \forall \pts_\alpha 
		\subseteq 
		\pts, |\pts_\alpha| = \alpha \label{cut:assignedToLaterSet}\\
		\intertext{and, if $UB$ is the objective function value of a feasible 
		solution of the \APCP, the inequalities }
		\sum_{j \in \pts \setminus \{i\}: d_{ij}\leq UB} y_{j} &\geq 
		\alpha(1-y_i) \qquad &&\forall 
		i \in \pts \label{eq:optcuts}
		\end{alignat}
	\end{subequations}
are optimality-preserving inequalities for the formulation \myref{APC1} for the 
\APCP, i.e., when adding \eqref{cut:assignedToLaterSet} and \eqref{eq:optcuts} 
to \myref{APC1}, the optimal objective function value does not change.
\end{theorem}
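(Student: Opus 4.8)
The plan is to treat the two families of inequalities separately, in each case exhibiting an optimal solution of \myref{APC1} that satisfies the new constraints. For \eqref{cut:assignedToLaterSet}, fix $i \in \pts$ and a set $\pts_\alpha \subseteq \pts$ with $|\pts_\alpha| = \alpha$. Take any optimal solution $(x,y,z)$. If fewer than $\alpha$ facilities among $\pts_\alpha$ are open, or if $i$ itself is open, then the first sum is at most $\alpha-1$ (respectively $0$), and since the second sum is bounded by the total assignment $\sum_{j \neq i} x_{ij} = \alpha(1-y_i) \leq \alpha$, a short case check shows the inequality holds. The interesting case is when all $\alpha$ facilities of $\pts_\alpha$ are open and $i$ is not; then the first sum equals $\alpha$, and I must argue that the second sum is $0$. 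The idea: if $i$ is assigned to some $j \notin \bigcup_{j' \in \pts_\alpha}(S_{ij'} \cup \{i,j'\})$, then $j$ is strictly farther from $i$ than every $j' \in \pts_\alpha$ (in the $\sigma_i$ order). But then reassigning $i$ from $j$ to one of the open facilities in $\pts_\alpha$ (all of which are closer to $i$) yields another feasible solution with objective no larger, since the only constraint affected is \eqref{pc1:dx}, and $d_{ij'} \le d_{ij}$. Iterating this reassignment over all such "too-far" assignments of $i$, and doing it for every $i$, produces an optimal solution satisfying \eqref{cut:assignedToLaterSet}.

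For \eqref{eq:optcuts}, the argument is cleaner. Let $UB$ be the objective value of a feasible solution, and let $(x,y,z)$ be any optimal solution; then $z \le UB$. Fix $i \in \pts$. If $y_i = 1$ the right-hand side is $0$ and there is nothing to prove. If $y_i = 0$, then by \eqref{pc1:sumx} point $i$ is assigned to $\alpha$ facilities $j$, each with $x_{ij} = 1$, hence (by \eqref{pc1:xy}) $y_j = 1$, and (by \eqref{pc1:dx}) $d_{ij} \le z \le UB$. So these $\alpha$ open facilities all lie in $\{j \neq i : d_{ij} \le UB\}$, giving $\sum_{j \neq i : d_{ij} \le UB} y_j \ge \alpha = \alpha(1-y_i)$. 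Note this argument in fact shows \eqref{eq:optcuts} is valid for every optimal solution, which is stronger than optimality-preserving.

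The main obstacle is the reassignment argument for \eqref{cut:assignedToLaterSet}: I need to be careful that reassigning $i$ does not disturb feasibility elsewhere. The key observations that make it work are that changing $x_{i\cdot}$ affects only the constraints \eqref{pc1:sumx} for point $i$ (which continues to hold since the number of facilities $i$ is assigned to is unchanged), the constraints \eqref{pc1:xy} (which hold because we reassign only to already-open facilities), and the constraints \eqref{pc1:dx} (which improve, or at worst stay equal, since we move to a closer facility and $z$ is unchanged). The constraint \eqref{pc1:sumy} does not involve $x$ at all. One subtlety to handle carefully: after reassigning, the set to which $i$ is assigned must remain of size exactly $\alpha$ with distinct elements — this is fine as long as the chosen open facilities of $\pts_\alpha$ that $i$ is not yet assigned to suffice in number, which they do because before reassignment $i$ is assigned to at most $\alpha - |\{j' \in \pts_\alpha : x_{ij'} = 1\}|$ "too-far" facilities and exactly that many slots among $\pts_\alpha$ are free. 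I would spell out this counting explicitly to close the proof.
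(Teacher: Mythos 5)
Your treatment of \eqref{eq:optcuts} is correct and is essentially the paper's argument. For \eqref{cut:assignedToLaterSet}, however, the case analysis has a genuine gap: the claim that the inequality already holds whenever fewer than $\alpha$ facilities of $\pts_\alpha$ are open does not follow from the bounds you state. If $0<\beta<\alpha$ facilities of $\pts_\alpha$ are open and $y_i=0$, you only obtain (first sum) $\le \alpha-1$ and (second sum) $\le \alpha$, i.e., a total of up to $2\alpha-1$, and the inequality can genuinely fail for an arbitrary optimal solution in this case: with $\alpha=2$, take $\pts_\alpha=\{j_1,j_2\}$ with $j_1$ closed and $j_2$ open, and an optimal solution in which $i$ is assigned to two open facilities both strictly farther (in $\sigma_i$) than $j_2$; the left-hand side is then $1+2=3>\alpha$. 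Since you perform the reassignment only in the case where all of $\pts_\alpha$ is open, your written proof never repairs such violations, even though the solution you would eventually construct does in fact satisfy them.

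The fix is essentially what the paper does: do not case-split on $\pts_\alpha$ at all, but pass once to the canonical optimal solution in which every non-opened point $i$ is assigned to its $\alpha$ closest opened facilities with respect to $\sigma_i$ (your reassignment, applied with $\pts_\alpha$ equal to that set of closest open facilities, produces exactly this solution, and your feasibility and objective checks for the reassignment are the right ones). For this canonical solution, every instance of \eqref{cut:assignedToLaterSet} follows from a single counting argument: if $\beta$ facilities of $\pts_\alpha$ are open, then at least $\beta$ opened facilities other than $i$ precede or equal the farthest element of $\pts_\alpha$ in the order $\sigma_i$, so at most $\alpha-\beta$ of the $\alpha$ closest opened facilities lie strictly beyond it, and the left-hand side is at most $\beta+(\alpha-\beta)=\alpha$. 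This uniform argument covers the partially-open case that your proposal leaves open.
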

\begin{proof}
Note that the set $\pts 
\setminus 
\bigcup_{j' \in \pts_\alpha} (S_{ij'} \cup \{i,j'\})$  
that appears in~\eqref{cut:assignedToLaterSet} can 
alternatively be 
described as the set  
$\{j \in \pts \setminus \{i\}: (d_{ij} > \max_{j' \in \pts_\alpha} \{d_{ij'}\}) 
\text{ or } (d_{ij} =  \max_{j' \in \pts_\alpha} \{d_{ij'}\} \text{ and } j 
> \max\{j' \in \pts_\alpha: d_{ij} = d_{ij'} \} )\}$ and is the set of 
facilities $j$ that 
are further away to 
$i$ than the furthest facility in $\pts_\alpha$ according to $\sigma_i$. 

The inequality~\eqref{cut:assignedToLaterSet} ensures that
if a certain number $\beta$ of facilities, that are at most as far away from 
$i$ 
than the furthest facility in $\pts_\alpha$, \blue{are} opened (and thus, $i$ 
can be 
assigned to these $\beta$ facilities), the point $i$ is assigned at most 
$\alpha - 
\beta$ times to facilities that are further away from $i$ than the furthest 
facility in $\pts_\alpha$. 
Clearly this is fulfilled for any optimal solution of \myref{APC1}, where every 
facility $i$ is 
assigned to those $\alpha$ opened facilities, that are the $\alpha$ closest 
facilities to $i$ according to $\sigma_i$. Thus, 
adding~\eqref{cut:assignedToLaterSet} to \myref{APC1} does not change the 
optimal objective function value.

Next consider the inequalities~\eqref{eq:optcuts}.
If $y_i$ is one in an optimal solution of \myref{APC1}, the 
inequality~\eqref{eq:optcuts} is clearly satisfied and thus it does not cut 
off any optimal solution. Now suppose $y_i$ is zero, so location 
$i$ is not opened. As we know that a feasible solution with objective function 
value 
$UB$ exists, it follows that $i$ must be assigned to $\alpha$ facilities at
distance at most $UB$ to location $i$ and these $\alpha$ facilities must be 
opened. Therefore, also in this case~\eqref{eq:optcuts} is fulfilled.
\end{proof}

Note that the inequalities~\eqref{cut:assignedToLaterSet} from 
Theorem~\ref{thm:optCutDaskin} force an assignment of any location $i$ to those 
$\alpha$ opened facilities, that are the $\alpha$ closest opened
facilities to $i$ according to $\sigma_i$. They do so,  
even when also other assignments would not change the objective function value. 
Thus, in a sense~\eqref{cut:assignedToLaterSet} are symmetry breaking 
constraints that forbid certain similar solutions.

\subsection{Variable fixing}\label{sec:formulationD:fixing}

Next, we present a variable fixing condition which can be utilized whenever a 
feasible 
solution to 
the \APCP is known. This fixing of variables cuts off feasible solutions, but 
it does not cut off any optimal 
solution, i.e., it is optimality-preserving.

\begin{theorem}\label{thm:optcuts}
	Let $UB$ be the objective function value of a feasible solution of 
the \APCP. Then when adding the constraints
	\begin{subequations}
		\begin{alignat}{3}
		x_{ij} & = 0 \qquad && \forall i,j \in \pts, i \neq j, d_{ij} > UB 
		\label{eq:optcuts_xij}
		\end{alignat}
	\end{subequations}
	to \myref{APC1}, no optimal solution is cut off.
\end{theorem}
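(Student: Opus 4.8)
The plan is to show that fixing $x_{ij}=0$ for all pairs with $d_{ij} > UB$ preserves at least one optimal solution of \myref{APC1}. The argument is essentially the same as the one used for the inequalities \eqref{eq:optcuts} in Theorem~\ref{thm:optCutDaskin}: given an optimal solution $(x,y,z)$ of \myref{APC1}, I would argue that it can be modified (if necessary) so that no point $i$ is assigned to a facility at distance greater than $UB$, without increasing the objective value.

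First I would take an arbitrary optimal solution $(x^*,y^*,z^*)$ of \myref{APC1}, with optimal objective value $z^* = f_\alpha(P^*)$ for the corresponding facility set $P^* = \{j : y^*_j = 1\}$. Since $UB$ is the objective value of \emph{some} feasible solution, we have $z^* \le UB$ by optimality. Now consider any point $i$ with $y^*_i = 0$. By constraint \eqref{pc1:sumx}, $i$ is assigned to exactly $\alpha$ open facilities, and by \eqref{pc1:dx} each such facility $j$ satisfies $d_{ij} x^*_{ij} \le z^* \le UB$, hence $d_{ij} \le UB$ whenever $x^*_{ij} = 1$. Therefore $x^*_{ij} = 0$ automatically for every pair with $d_{ij} > UB$, so $(x^*,y^*,z^*)$ already satisfies \eqref{eq:optcuts_xij}, and adding these equations cuts off no optimal solution. (For points $i$ with $y^*_i = 1$, constraint \eqref{pc1:sumx} forces $x^*_{ij} = 0$ for all $j \ne i$ anyway.)

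I expect there to be essentially no obstacle here: the key observation is simply that $z^* \le UB$ combined with \eqref{pc1:dx} already implies every \emph{used} assignment has distance at most $UB$, so the fixing is satisfied by any optimal solution, not merely by one of them. The one point worth stating carefully is the chain $d_{ij} x^*_{ij} \le z^* \le UB$, which is where the assumption that $UB$ comes from a feasible solution (hence bounds $z^*$ from above) is used; this mirrors exactly the reasoning already given for \eqref{eq:optcuts}. A short remark could note that, in fact, this shows the fixing is valid for \emph{every} optimal solution, strengthening slightly the "no optimal solution is cut off" claim, though that is not needed for the statement.
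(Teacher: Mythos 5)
Your argument is correct and is essentially the paper's own proof, just spelled out in more detail: the paper likewise observes that in an optimal solution no point can be assigned to a facility farther than $UB$, which is exactly your chain $d_{ij}x^*_{ij}\leq z^*\leq UB$ combined with the integrality of $x^*_{ij}$. Nothing to add.
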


\begin{proof}
	Clearly in an optimal solution no point $i$ can be assigned to a point $j$ 
	that 
	is further away than $UB$, thus~\eqref{eq:optcuts_xij} is satisfied for any 
	optimal solution.
\end{proof}	


\subsection{Relaxing the assignment 
variables}\label{sec:formulationD:integrality}

We now turn our attention to an interesting aspect of \myref{APC1}. 
The 
classical formulation \myref{PC1} of the  \PCP has the following 
nice property:
When relaxing the $x$-variables in \myref{PC1}, i.e., 
replacing $x_{ij} \in \{0,1\}$ with $0 \leq x_{ij}$ 
for all $i,j \in \pts$, then the optimal objective function value does not 
change. Hence, it is not necessary to force the $x$-variables to be 
binary in order to obtain the optimal objective function value of \myref{PC1}. 
This is for example exploited by \citet{gaar2022scaleable}. 
Interestingly, this is not the case anymore for the \APCP.
To investigate this in detail, let $\mytag{APC1-Rx}$ be the formulation 
\myref{APC1} with relaxed $x$-variables, i.e., \myref{APC1-Rx} is
\myref{APC1} without~\eqref{pc1:xbin} and with the constraints $0 \leq x_{ij}$ 
for all $i,j \in \pts$ with $i\neq j$. 

We first consider an example to get some insight. The example is illustrated in 
Figure \ref{fig:ex:DaskinBasic}.

\tikzstyle{vertex}=[circle,draw=black,thick,fill=black!0,minimum 
size=20pt,inner 
sep=0pt]
\tikzstyle{edge} = [draw,thick]
\tikzstyle{weight} = [font=\small]

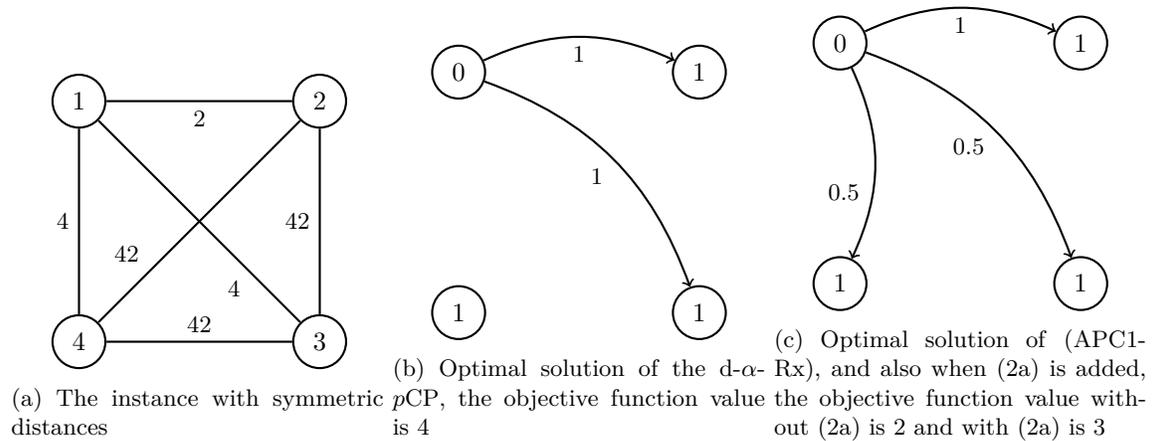
\begin{figure}[tbh]
	\begin{center}

	\begin{subfigure}{0.3\textwidth}
		\centering
		\begin{tikzpicture}[scale=1.6, auto,swap]
		\foreach \pos/\name in {{(1,1)/1}, {(1,-1)/4}, {(3,1)/2}, 
			{(3,-1)/3}}
		\node[vertex] (\name) at \pos {$\name$};
		\foreach \source/ \dest /\weight in {
			1/2/2,1/4/4,2/3/42,3/4/42}
		\path[edge] (\source) -- node[weight] {$\weight$} (\dest);
		\foreach \source/ \dest /\weight in {
			1/3/4,2/4/42}
		\path[edge] (\source) -- node[near end,weight] {$\weight$} (\dest);
		\end{tikzpicture}
		\caption{The instance with symmetric distances\label{ex:daskinLRxa}}
	\end{subfigure}
	\begin{subfigure}{0.3\textwidth}
		\centering
	\begin{tikzpicture}[scale=1.6, auto,swap]
	\foreach \pos/\name/\value in {{(1,1)/1/0}, {(1,-1)/4/1}, {(3,1)/2/1}, 
		{(3,-1)/3/1}}
	\node[vertex] (\name) at \pos {$\value$};
	\foreach \source/ \dest /\weight in {
		1/2/1,1/3/1}
	\path[edge] (\source) edge[bend left=25,draw,thick,->] node[weight] 
	{$\weight$} (\dest);
	\end{tikzpicture}
	\caption{Optimal solution of the \APCP, the objective function value is $4$ 
	\label{ex:daskinLRxb}}
\end{subfigure}
\begin{subfigure}{0.3\textwidth}
	\centering
	\begin{tikzpicture}[scale=1.6, auto,swap]
	\foreach \pos/\name/\value in {{(1,1)/1/0}, {(1,-1)/4/1}, {(3,1)/2/1}, 
		{(3,-1)/3/1}}
	\node[vertex] (\name) at \pos {$\value$};
	\foreach \source/ \dest /\weight in {
		1/2/1,1/3/0.5}
	\path (\source) edge[bend left=25,draw,thick,->] node[weight] {$\weight$} 
	(\dest);
	\foreach \source/ \dest /\weight in {
		1/4/0.5}
	\path (\source) edge[bend left=25,draw,thick,->] node[near end,weight] 
	{$\weight$} (\dest);
	\end{tikzpicture}
	\caption{Optimal solution of \myref{APC1-Rx}, and also when 
	\eqref{cut:sumdijxij_alphaz} is 
	added, the objective function value without \eqref{cut:sumdijxij_alphaz} is 
	$2$ and with 
	\eqref{cut:sumdijxij_alphaz} is $3$ \label{ex:daskinLRxc}}
\end{subfigure}
\caption{Illustration of Example 
\ref{ex:DaskinBasic}, \blue{in which} $p=3$ and 
$\alpha=2$. \label{fig:ex:DaskinBasic}
The value in the nodes in 
Figure~\ref{ex:daskinLRxa} is the index of the node and 
the values near the arcs are 
the distances.
The values in the nodes in 
Figures \ref{ex:daskinLRxb} and \ref{ex:daskinLRxc} 
are the values of the $y$-variables in the optimal 
solution, and the values near the arcs are the values of 
the $x$-variables 
in the optimal solution. If an 
arc is not drawn in a solution, this means the 
corresponding $x$-variable takes value zero.}
\end{center}
\end{figure}

\begin{examplex}
	\label{ex:DaskinBasic}
	Let $\pts=\{1,2,3,4\}$, $p = 3$, $\alpha = 2$, 
	$d_{1,2} = 2$, 
	$d_{1,3} = d_{1,4} = 4$,
	$d_{2,3} = d_{2,4} = d_{3,4} = 42$ and $d_{ij} = d_{ji}$ for all $i,j\in 
	\pts$ with $i \neq j$.
	
	In this example, it is easy to see that one optimal solution 
	$(x^*,y^*,z^*)$ for the 
	formulation \myref{APC1} of the \APCP is given as $y^*_1 = 
	0$, $y^*_2 = y^*_3 = y^*_4 = 1$, $x^*_{1,2} = x^*_{1,3} = 1$, all other 
	values of 
	$x^*_{ij}$ are equal to $0$, and $z^*=4$. Thus, the optimal objective 
	function 
	value of \myref{APC1} is $z^*=4$.
	
	Next consider the solution $(x',y',z')$, where $y' = y^*$, $x'_{1,2} =1$, 
	$x'_{1,3} = x'_{1,4} = 0.5$, all other values of 
	$x'_{ij}$ are equal to $0$, and $z'=2$. 
	Clearly, $(x',y',z')$ is 
	feasible for \myref{APC1-Rx} and therefore 
	the optimal 
	objective function 
	value of \myref{APC1-Rx} is at most $2$. Indeed, the optimal objective 
	function 
	value of \myref{APC1-Rx} is $2$ and thus not equal to the 
	optimal objective 
	function value of \myref{APC1}.
	
	It is easy to see, that the solution $(x',y',z')$ is not feasible 
	anymore for 
	\myref{APC1-Rx} when 
	the 
	inequalities~\eqref{cut:sumdijxij_alphaz} are added, as $(x',y',z')$ 
	does not fulfill~\eqref{cut:sumdijxij_alphaz} for $i=1$.
	However, the solution $(x'',y'',z'')$ with $x'' = 
	x'$, $y'' = y'$ and $z'' = 3$ is feasible. So the 
	optimal objective function value of \myref{APC1-Rx} 
	with~\eqref{cut:sumdijxij_alphaz} is at most $3$, and indeed it is exactly 
	$3$. Thus, it is again not  equal 
	to the optimal objective 
	function value of \myref{APC1}.
	
	Finally, $(x'',y'',z'')$ is not feasible for \myref{APC1-Rx} 
	with~\eqref{cut:assignedToLaterSet}, as  
	$(x'',y'',z'')$ does not fulfill the inequality $y''_2 + y''_3 + 
	x''_{1,4} \leq 
	\alpha$, 
	which is~\eqref{cut:assignedToLaterSet} for $i=1$ and $\pts_\alpha = 
	\{2,3\}$. Indeed, the optimal objective function value of \myref{APC1-Rx} 
	with~\eqref{cut:assignedToLaterSet} 
	coincides with the optimal objective function value of \myref{APC1}.
\end{examplex}

Example~\ref{ex:DaskinBasic} \blue{shows that} 
\myref{APC1-Rx} does not necessarily 
give the same \elli{optimal} objective function \elli{value} as \myref{APC1}, 
but \elli{there exist instances where} \blue{after} adding 
\eqref{cut:assignedToLaterSet}, the optimal 
objective function values coincide. The next result shows that this behavior is 
not a 
coincidence. 

\begin{theorem}\label{thm:relaxingxAPC1}
	\myref{APC1-Rx} 
	with~\eqref{cut:assignedToLaterSet} has 
	the same optimal objective function value as \myref{APC1}. 
\end{theorem}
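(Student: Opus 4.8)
The plan is to prove the two inequalities between the optimal values of \myref{APC1-Rx} with~\eqref{cut:assignedToLaterSet} and of \myref{APC1} separately. The ``$\le$'' direction is essentially free: \myref{APC1-Rx} with~\eqref{cut:assignedToLaterSet} arises from \myref{APC1} with~\eqref{cut:assignedToLaterSet} by dropping the integrality requirement~\eqref{pc1:xbin} on the $x$-variables, so it is a relaxation and its optimal value is no larger than that of \myref{APC1} with~\eqref{cut:assignedToLaterSet}; and by Theorem~\ref{thm:optCutDaskin} adding~\eqref{cut:assignedToLaterSet} to \myref{APC1} does not change the optimal value. Chaining these gives the inequality.

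For the ``$\ge$'' direction I would prove the stronger statement that \emph{every} feasible solution $(x,y,z)$ of \myref{APC1-Rx} with~\eqref{cut:assignedToLaterSet} already has integral $x$, hence is feasible for \myref{APC1}; this makes the feasible region of the former contained in that of the latter, and the inequality between the minima follows. Since $y$ remains binary in \myref{APC1-Rx}, I would fix $i \in \pts$ and split on $y_i$. If $y_i = 1$, constraint~\eqref{pc1:sumx} forces $\sum_{j \in \pts \setminus \{i\}} x_{ij} = 0$, so with $x \ge 0$ all these variables vanish. If $y_i = 0$, let $\feasSol = \{j \in \pts : y_j = 1\}$ (so $|\feasSol| = p \ge \alpha$ by~\eqref{pc1:sumy}, and $i \notin \feasSol$) and choose $\pts_\alpha \subseteq \feasSol$ to consist of the $\alpha$ open facilities that come first in the order $\sigma_i$ --- this choice is the heart of the argument. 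Then $\sum_{j \in \pts_\alpha} y_j = \alpha$, so the instance of~\eqref{cut:assignedToLaterSet} for this $i$ and this $\pts_\alpha$ collapses to $\sum_{j \in T} x_{ij} \le 0$ with $T := \pts \setminus \bigcup_{j' \in \pts_\alpha}(S_{ij'} \cup \{i,j'\})$, forcing $x_{ij} = 0$ for all $j \in T$. Using~\eqref{pc1:xy} (so $x_{ij} > 0$ implies $y_j = 1$) together with the fact that every open facility outside $\pts_\alpha$ belongs to $T$, I would conclude that $x_{i\cdot}$ is supported on $\pts_\alpha$; then~\eqref{pc1:sumx} gives $\sum_{j \in \pts_\alpha} x_{ij} = \alpha$, and since $x_{ij} \le y_j = 1$ for each of the $\alpha$ indices $j \in \pts_\alpha$, this forces $x_{ij} = 1$ on $\pts_\alpha$ and $0$ elsewhere. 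In all cases $x_{i\cdot}$ is integral, so $x$ is integral and $(x,y,z)$ is feasible for \myref{APC1}.

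The step I expect to be the main obstacle is verifying the claim that the only open facilities lying outside $T$ are exactly those of $\pts_\alpha$, which is the link between the combinatorial cut and the forced integrality. This rests on the alternative description of $\pts \setminus \bigcup_{j' \in \pts_\alpha}(S_{ij'} \cup \{i,j'\})$ already established in the proof of Theorem~\ref{thm:optCutDaskin} --- namely that $T$ is the set of facilities lying strictly after every element of $\pts_\alpha$ in the order $\sigma_i$ --- so that an open facility is outside $T$ precisely when it comes at or before the $\alpha$-th closest open facility in $\sigma_i$, i.e.\ precisely when it is in $\pts_\alpha$; the tie-breaking by index is absorbed uniformly into $\sigma_i$ and needs a careful but routine check. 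The remaining bookkeeping is minor: $\pts_\alpha$ is well defined because $|\feasSol| = p \ge \alpha$, and $i \notin \feasSol$ guarantees that all $p$ open facilities are distinct from $i$. As a sanity check, the example preceding the theorem fits the pattern: the fractional point $(x'',y'',z'')$ there violates~\eqref{cut:assignedToLaterSet} exactly for $i = 1$ and $\pts_\alpha = \{2,3\}$, which are the two closest open facilities to $1$, matching the choice of $\pts_\alpha$ above.
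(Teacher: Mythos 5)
Your proof is correct, but it takes a genuinely different --- and in fact stronger --- route than the paper's. The paper only proves the inequality between the two optima: it takes an optimal solution $(x^*,y^*,z^*)$ of \myref{APC1-Rx} with~\eqref{cut:assignedToLaterSet} and builds from it a feasible solution of \myref{APC1} with the same value, choosing for each $i$ with $y^*_i=0$ a set $\pts^i_\alpha$ of ``crossing points'' $j_{i,k}$ at which the cumulative sum of $x^*_{ij}$ along $\sigma_i$ passes the thresholds $0,1,\dots,\alpha-1$, and then replacing $x^*_{i\cdot}$ by the indicator of $\pts^i_\alpha$. You instead show that no rounding is needed: with $\pts_\alpha$ equal to the first $\alpha$ open facilities in $\sigma_i$ one gets $\sum_{j\in\pts_\alpha}y_j=\alpha$, so \eqref{cut:assignedToLaterSet} annihilates $x_{ij}$ on $T=\pts\setminus\bigcup_{j'\in\pts_\alpha}(S_{ij'}\cup\{i,j'\})$, \eqref{pc1:xy} annihilates it on closed facilities, and \eqref{pc1:sumx} together with $x_{ij}\le y_j\le 1$ forces $x_{ij}=1$ on the $\alpha$ elements of $\pts_\alpha$; hence every feasible point of \myref{APC1-Rx} with~\eqref{cut:assignedToLaterSet} already has binary $x$ (uniquely determined by $y$) and is feasible for \myref{APC1}. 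The two arguments share the same engine --- instantiating \eqref{cut:assignedToLaterSet} at a well-chosen $\pts_\alpha$ to kill the tail of $x_{i\cdot}$ --- and in light of your integrality observation the paper's crossing points collapse to your first-$\alpha$ choice, so its constructed $x\newOpt$ is just $x^*$; what your version buys is a cleaner statement (feasible-region containment rather than a rounding construction whose feasibility must be re-verified constraint by constraint), at the mild cost of arguing about all feasible points rather than one optimum. The step you flagged as the main obstacle does go through: $T$ is exactly the set of points strictly after the $\sigma_i$-maximum of $\pts_\alpha$, and an open facility not among the first $\alpha$ open ones is necessarily strictly later in the total order $\sigma_i$, hence lies in $T$. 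The ``$\le$'' direction is handled exactly as in the paper, via Theorem~\ref{thm:optCutDaskin}.
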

\begin{proof}
	Let $(x^*,y^*,z^*)$ be an optimal solution of \myref{APC1-Rx} 
	with~\eqref{cut:assignedToLaterSet}.
	Because of Theorem~\ref{thm:optCutDaskin}, the optimal objective function 
	value
	of \myref{APC1} is at least $z^*$, so it is enough to show that $z^*$ is at 
	least the optimal objective function 
	of \myref{APC1}. 
	
	To do so, we construct a solution $(x\newOpt,y\newOpt,z\newOpt)$ 
	that is feasible for \myref{APC1} with $z^* \geq z\newOpt$.
	Towards this end consider some $i \in \pts$. 
	If $y^*_i = 1$, then let $\pts^i_\alpha = \emptyset$.
	Otherwise, so if $y^*_i = 0$, 
	let $j_{i,k}$ be such that 
	$\sum_{j \in S_{ij_{ik}}\setminus \{i\}} x^*_{ij} \leq k-1$ and such that
	$\sum_{j \in (S_{ij_{ik}} \cup \{j_{ik}\})\setminus \{i\}} x^*_{ij} > 
	k-1$ for all $k \in \{1, 2, \dots, \alpha\}$. Clearly such $j_{i,k}$ exist 
	because of~\eqref{pc1:sumx} and $y^*_i = 0$. 
	Let $\pts^i_\alpha = \{j_{i,k}: k \in \{1, \dots, \alpha\}\}$. 
	Due to the fact that $x^*_{ij} \leq 1$ for all $j \in \pts\setminus \{i\}$, 
	all $j_{i,k}$ are distinct for different values of $k$ by construction, so 
	$|\pts^i_\alpha| = \alpha$. Furthermore, 
	\begin{align}\label{yj1forNia}	
	y^*_{j} = 1 \quad \forall  
	j \in \pts^i_\alpha,
	\end{align}
		 because for such $j$ by construction $x^*_{ij} > 
	0$ and 
	$y_{j}^* \geq x^*_{ij}$ because of~\eqref{pc1:xy}.
	As a consequence, \eqref{cut:assignedToLaterSet} for $\pts_\alpha = 
	\pts^i_\alpha$ implies that 
	$\sum_{j \in \pts \setminus  
		\bigcup_{j' \in \pts^i_\alpha} (S_{ij'} \cup \{i,j'\} ) } x^*_{ij} \leq 
		0$ and hence $x^*_{ij} = 0$ for all $j \in \pts \setminus  
		\bigcup_{j' \in \pts^i_\alpha} (S_{ij'} \cup \{i,j'\} )$.
		This, together with~\eqref{pc1:sumx} and the fact that $j_{i,\alpha}$ 
		is the facility in $\pts^i_\alpha$ furthest away from $i$ according to 
		$\sigma_i$, implies 
		that 
		$\sum_{j \in (S_{ij_{i,\alpha}} \cup \{j_{i,\alpha}\})\setminus
		\{i\}} x^*_{ij} 
			= \alpha$. Due to the definition of $j_{i,\alpha}$ this implies 
			that $x^*_{ij_{i,\alpha}} = 1$. Thus 
			\begin{align}\label{zgeqdij}
			z^* \geq d_{ij_{i,\alpha}} 
			= \max_{j \in \pts^i_\alpha} \{d_{ij}\}
			\end{align}
			because 
			of~\eqref{pc1:dx}.
			
	Finally, let $y\newOpt = y^*$, $z\newOpt = z^*$ and let $x\newOpt_{ij} = 1$ 
	if $j \in \pts^i_\alpha$ and $x\newOpt_{ij} = 0$ otherwise. Clearly,  
	$x\newOpt$ and $y\newOpt$ are binary and $y\newOpt$ 
	satisfies~\eqref{pc1:sumy}. Furthermore, by construction of 
	$\pts^i_\alpha$, also~\eqref{pc1:sumx} and, in particular because 
	of~\eqref{yj1forNia}, \eqref{pc1:xy} are satisfied. Furthermore, 
	\eqref{pc1:dx} is fulfilled because of~\eqref{zgeqdij}. 
	As a consequence, $(x\newOpt,y\newOpt,z\newOpt)$ 
	is feasible for \myref{APC1} with $z^* = z\newOpt$, which finishes the 
	proof.
\end{proof}
As a consequence of Theorem~\ref{thm:relaxingxAPC1}, relaxing the $x$-variables 
in \myref{APC1} without changing the optimal objective function is possible, 
\blue{whenever} 
the inequalities~\eqref{cut:assignedToLaterSet} are added. 
Example~\ref{ex:DaskinBasic} shows that sometimes these inequalities are 
indeed necessary to preserve the optimal objective function value.

Note that also additionally including \eqref{cut:sumdijxij_alphaz} and 
\eqref{cut:yixij} into \myref{APC1-Rx} 
with~\eqref{cut:assignedToLaterSet} does not change the optimal objective 
function value, as these inequalities are valid for \myref{APC1}.

\section{Our second formulation \label{sec:formulationE}}

In this section we detail our second integer programming formulation of the 
\APCP. First, we present the formulation in 
Section~\ref{sec:formulationE:formulation}. Then we derive a set of valid 
inequalities in Section~\ref{sec:formulationE:cuts}. 
Finally, we present conditions which allow to fix some of the variables in the 
linear relaxation in 
Section~\ref{sec:formulationE:fixing}.

\subsection{Formulation}\label{sec:formulationE:formulation}
Our second formulation can be viewed as an extension of the formulation for the 
 \PCP proposed by \cite{ales2018}, which in turn is a 
refinement 
of a formulation of \cite{elloumi2004} with less constraints and the same 
linear relaxation bound. We denote the formulation 
of the  \PCP by \cite{elloumi2004} as $\mytag{PCE}$ in the same fashion 
as  
\cite{gaar2022scaleable}. Moreover, we denote the formulation of the  
\PCP 
by \cite{ales2018} as $\mytag{PCA}$. 
\elli{Both \myref{PCE} and \myref{PCA} can be found in Appendix~\ref{sec:app1}.}

Let $D=\{d_{ij}: i,j \in 
\pts, i \neq j\}$ denote the set of all possible distances
and let $d_1$, $\ldots$, $d_{K}$ be the values in~$D$, i.e., $D=\{d_1, \ldots, 
d_{K}\}$. 
It is easy to see that the optimal objective function value of the \APCP 
is in $D$ 
and there are at most $(|\pts|-1)|\pts|$ potential optimal values. 
Furthermore, let $D_i = \left(\bigcup_{j \in \pts\setminus \{i\}} 
\{d_{ij}\} \right)\setminus \{d_1\}$, so $D_i$ is the set of all distances that 
are relevant for point $i$, except for the smallest overall distance.

In this formulation, we have a binary variable $u_k$ for each $k=2,\ldots, K$. 
This 
variable indicates whether the optimal objective function value of 
the \APCP is 
greater than or equal to 
$d_k$, i.e., $u_k$ is one if and only if the optimal objective function value 
of the \APCP is at least $d_k$. 
Aside from 
the $u$-variables, we also have the binary variables $y_j$ for all $j\in \pts$ 
to 
indicate whether a facility is opened at point $j$ similar to the previous 
formulation. The formulation is denoted as \myref{APC2} and reads as
\begin{subequations}
	\begin{alignat}{3}
	\mytag{APC2} \qquad
	& \min & d_1 + \sum_{k=2}^{K} (d_k &- d_{k-1})u_k  \label{pc3:ojb}  \\ 
	& \st~ &  \sum_{j \in \pts} y_j &= p \label{pc3:sumy} \\       
	&& u_{k-1} &\geq u_{k} && \forall k \in \{3, \dots, K\} \label{pc3:ulink} 
	\\  
	&& \alpha u_k + \sum_{j \in \pts \setminus \{i\} :  d_{ij} < d_k} 
	y_{j} &\geq  \alpha(1- 
	y_i) \qquad && \forall i \in \pts, 
	\forall d_k \in D_i \label{pc3:sumyu} \\
	&& u_{k} &\in  \{0,1\} \qquad&& \forall  k \in \{2, \dots, K\} 
	\label{pcE:ubin}	\\
	&& y_{j} &\in  \{0,1\} && \forall j \in \pts.
	\label{pcE:ybin}
	\end{alignat}
\end{subequations}

The constraints \eqref{pc3:sumy} ensure that exactly $p$ facilities are opened. 
The constraints \eqref{pc3:ulink} make sure that if the variable $u_k$ is one, 
indicating that the optimal objective function value is at least $d_k$, then 
also all 
variables with smaller index are one. These constraints ensure that the 
objective function \eqref{pc3:ojb} measures the objective function value 
correctly: \blue{in}~\eqref{pc3:ojb}, the coefficient of $u_k$ 
is always the 
distance-increment from $d_{k-1}$ to $d_k$. Thus, we need that all 
$u_{k'}$ with $k'\leq k$ are set to one in order to get a value of $d_k$ in 
the objective function. Finally, constraints \eqref{pc3:sumyu} model that for 
each $i \in N$, the $u$-variables are set in such a way that $u_k$ is one, if 
$i$ is not opened and 
the $\alpha$-nearest open facility to $i$ has distance at least $d_k$: In case 
a facility is opened at point $i$, 
i.e., $y_i$ is one, the constraints are trivially fulfilled. In case no 
facility is opened at point~$i$, i.e., $y_i$ is zero, the constraints force 
$u_k$ to be one, or that at least $\alpha$ facilities closer than distance 
$d_k$ to $i$ are opened. The formulation \myref{APC2} has $O(|\pts|^2)$ 
variables and $O(|\pts|^2)$ 
constraints.

In comparison to the formulation \myref{PCA} for the  \PCP, we have 
several modifications in \myref{APC2} for the \APCP. First, we have 
the right hand-side $1-y_i$ instead of just $1$ and the sum over all $j \in 
\pts \setminus \{i\}$ instead of over all $j \in \pts$ in \eqref{pc3:sumyu} as 
a consequence of the fact that in the \APCP opened facilities do not serve as 
demand 
points. Furthermore, we have a coefficient $\alpha$ for $u_k$ and $1-y_i$ in 
\eqref{pc3:sumyu}. Finally, we do not include $K$ into the set $D_i$, 
independent from whether there is a facility $j$ with distance $d_{ij} = d_K$ 
or not. 
This does not influence the correctness of the model, as in the case that there 
is no facility $j$ with $d_{ij} = d_K$ for some $i$, then for \eqref{pc3:sumyu} 
for~$i$ and $k=K$, the sum $\sum_{j \in \pts \setminus \{i\} :  d_{ij} < d_k} 
y_{j}$ is equal to $p-y_i$. 
\elli{This implies that the constraint becomes $\alpha u_K \geq \alpha(1-y_i) - 
(p-y_i)$, which is always satisfied because $1 \leq \alpha \leq p$ holds.} 
Therefore the constraint does not impose a 
restriction on $u_K$, \elli{and $K$ can be omitted when defining the set $D_i$.}

\subsection{Strengthening inequalities}\label{sec:formulationE:cuts}

We have the following valid inequalities.

\begin{theorem}\label{thm:validInequE}
	The inequalities
		\begin{alignat}{3}
		u_k + y_i & \geq  1 \qquad &&  
		\forall i \in \pts, d_k \in D_i, |\{j \in \pts\setminus \{i\}: d_{ij} < 
		d_k\}| < \alpha \label{cut:ukyj}
		\end{alignat}
	are valid inequalities for the formulation \myref{APC2} for the \APCP, 
	i.e., when adding \eqref{cut:ukyj} to \myref{APC2}, the 
	set of feasible solutions does not change.
\end{theorem}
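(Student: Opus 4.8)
The plan is to show that \eqref{cut:ukyj} is implied (in the integer feasible region) by the model constraint \eqref{pc3:sumyu} together with the bound $u_k \le 1$. Fix $i \in \pts$ and $d_k \in D_i$ with $|\{j \in \pts\setminus\{i\}: d_{ij} < d_k\}| < \alpha$. I consider a feasible solution $(u,y)$ of \myref{APC2} and distinguish two cases. If $y_i = 1$, then $u_k + y_i \ge 1$ holds trivially, so nothing to prove.

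The main work is the case $y_i = 0$. Here I invoke \eqref{pc3:sumyu} for this $i$ and $d_k$, which gives $\alpha u_k + \sum_{j \in \pts\setminus\{i\}: d_{ij} < d_k} y_j \ge \alpha$. Since $y_j \le 1$ for all $j$, the sum is at most $|\{j \in \pts\setminus\{i\}: d_{ij} < d_k\}| \le \alpha - 1$ (using the assumption that this cardinality is strictly less than $\alpha$ and that it is an integer). Hence $\alpha u_k \ge \alpha - (\alpha-1) = 1$, so $u_k \ge 1/\alpha > 0$. Because $u_k$ is binary this forces $u_k = 1$, and therefore $u_k + y_i = 1 + 0 = 1$, establishing \eqref{cut:ukyj}.

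I would then note that this argument shows every feasible solution of \myref{APC2} already satisfies \eqref{cut:ukyj}, so adding these inequalities does not remove any feasible solution — which is exactly the claim. I expect essentially no obstacle here: the only subtlety is remembering that the strict inequality on the cardinality, combined with integrality of the count, yields the bound $\alpha - 1$, and that the conclusion $u_k \ge 1/\alpha$ must be upgraded to $u_k = 1$ using that $u_k$ is a binary (hence integral) variable. If one wanted the inequality to be valid for the linear relaxation as well, it would \emph{not} be, since there $u_k = 1/\alpha$ is allowed; so the statement is genuinely about the integer feasible set, and the proof should make the use of integrality explicit.
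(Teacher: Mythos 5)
Your proof is correct. It reaches the same conclusion as the paper but by a somewhat different route: the paper's proof is a one-sentence semantic argument appealing to the meaning of the variables (if $i$ has fewer than $\alpha$ locations closer than $d_k$, then either the objective is at least $d_k$, forcing $u_k=1$, or $i$ is opened), whereas you derive \eqref{cut:ukyj} purely algebraically from the model constraint \eqref{pc3:sumyu} together with $y_j\le 1$, the integrality of the cardinality count, and the integrality of $u_k$. Your version has two advantages: it pinpoints exactly where integrality of $u_k$ is used (the step upgrading $u_k\ge 1/\alpha$ to $u_k=1$), and your closing remark that the inequality is \emph{not} valid for the linear relaxation is both correct and consonant with the paper: Example~\ref{ex:daskinLRbetter} exhibits an instance where adding \eqref{cut:ukyj} to \myref{APC2-R} strictly improves the bound, and your derivation is the natural converse of Observation~\ref{obs:elConstraintsReplacement}, which states that \eqref{cut:ukyj} dominates \eqref{pc3:sumyu} for these indices; together the two directions show the constraints are equivalent on the integer feasible set but that \eqref{cut:ukyj} is strictly stronger fractionally. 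One tiny presentational point: in the case $y_i=1$ you should say $u_k+y_i\ge y_i=1$ using $u_k\ge 0$, which you implicitly do; no gap there.
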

\begin{proof}
	In any optimal solution of \myref{APC2}, if a point $i$ is such that it 
	does not 
	have $\alpha$ locations at distance smaller than $d_k$, then any feasible 
	solution either has objective function value at least $d_k$ (so $u_k = 1$) 
	or $i$ is opened (so $y_i = 1$). 
\end{proof}

We observe the following for the inequalities of Theorem~\ref{thm:validInequE}.

\begin{observation}\label{obs:elConstraintsReplacement}
	For any $i$ and $d_k \in D_i$ such that 
	$|\{j \in \pts\setminus \{i\}: d_{ij} < 	d_k\}| < \alpha$, 
	the inequalities 
	\eqref{pc3:sumyu} are dominated by 
	the inequalities~\eqref{cut:ukyj}, because the former are the latter 
	multiplied by $\alpha$ with additional non-negative terms in the sum 
	on the left hand-side.
	Thus, it is not necessary to include \eqref{pc3:sumyu} for any such~$i$ and 
	$d_k$, if \eqref{cut:ukyj} is included.
\end{observation}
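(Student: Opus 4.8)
The plan is to prove Observation~\ref{obs:elConstraintsReplacement} by a direct algebraic comparison of the two inequality families, term by term. Fix a point $i \in \pts$ and a distance $d_k \in D_i$ with the property that $|\{j \in \pts\setminus \{i\}: d_{ij} < d_k\}| < \alpha$. I would write down the relevant instance of \eqref{pc3:sumyu}, namely $\alpha u_k + \sum_{j \in \pts \setminus \{i\} :  d_{ij} < d_k} y_{j} \geq  \alpha(1- y_i)$, and the relevant instance of \eqref{cut:ukyj}, namely $u_k + y_i \geq 1$, and show that the former is implied by the latter together with the nonnegativity of the $y$-variables.

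The key step is to multiply \eqref{cut:ukyj} by $\alpha$, obtaining $\alpha u_k + \alpha y_i \geq \alpha$, which rearranges to $\alpha u_k \geq \alpha(1-y_i)$. Then I would add the nonnegative quantity $\sum_{j \in \pts \setminus \{i\} :  d_{ij} < d_k} y_{j} \geq 0$ (each $y_j \in \{0,1\}$, or $y_j \geq 0$ in the linear relaxation) to the left-hand side, yielding exactly $\alpha u_k + \sum_{j \in \pts \setminus \{i\} :  d_{ij} < d_k} y_{j} \geq \alpha(1-y_i)$, which is \eqref{pc3:sumyu} for this $i$ and $d_k$. This shows \eqref{pc3:sumyu} is dominated (even at the level of the LP relaxation), so it is redundant once \eqref{cut:ukyj} is present, and can be omitted. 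I would also note explicitly that the hypothesis $|\{j \in \pts\setminus \{i\}: d_{ij} < d_k\}| < \alpha$ is exactly the condition under which \eqref{cut:ukyj} is stated to be valid in Theorem~\ref{thm:validInequE}, so the dominating inequality is indeed available precisely in the cases where we want to drop \eqref{pc3:sumyu}.

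There is essentially no obstacle here — the argument is a one-line inequality manipulation. The only thing worth being careful about is the direction of the domination claim and the role of nonnegativity: the standard notion of one valid inequality dominating another (in the sense that the feasible region of the relaxation with the dominating inequality is contained in that with the dominated one) requires that the extra terms $\sum_{j} y_j$ on the left be nonnegative, which holds since these are $0/1$ variables (and remain $\geq 0$ in any sensible relaxation). I would phrase the conclusion as: for every such $i$ and $d_k$, constraint \eqref{pc3:sumyu} is a nonnegative combination of \eqref{cut:ukyj} (scaled by $\alpha$) and the bound constraints $y_j \geq 0$, hence it may be removed from \myref{APC2} without enlarging the feasible region whenever \eqref{cut:ukyj} is included.

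\begin{proof}
	Fix $i \in \pts$ and $d_k \in D_i$ with $|\{j \in \pts\setminus \{i\}: d_{ij} < d_k\}| < \alpha$, so that \eqref{cut:ukyj} for this $i$ and $d_k$ is a valid inequality by Theorem~\ref{thm:validInequE}. Multiplying \eqref{cut:ukyj} by $\alpha \geq 0$ gives $\alpha u_k + \alpha y_i \geq \alpha$, i.e.\ $\alpha u_k \geq \alpha(1-y_i)$. Adding the nonnegative term $\sum_{j \in \pts \setminus \{i\} :  d_{ij} < d_k} y_{j} \geq 0$ to the left-hand side yields $\alpha u_k + \sum_{j \in \pts \setminus \{i\} :  d_{ij} < d_k} y_{j} \geq \alpha(1-y_i)$, which is exactly \eqref{pc3:sumyu} for $i$ and $d_k$. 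Hence \eqref{pc3:sumyu} for such $i$ and $d_k$ is a nonnegative combination of \eqref{cut:ukyj} and the bound constraints $y_j \geq 0$, and is therefore implied by them. Consequently, \eqref{pc3:sumyu} need not be included for any such $i$ and $d_k$ whenever \eqref{cut:ukyj} is included.
\end{proof}
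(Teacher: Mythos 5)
Your proposal is correct and is exactly the argument the paper gives inline in the observation itself: \eqref{pc3:sumyu} is \eqref{cut:ukyj} scaled by $\alpha$ plus the nonnegative sum $\sum_{j \in \pts \setminus \{i\} : d_{ij} < d_k} y_j$ on the left-hand side. Your write-up simply makes the one-line manipulation and the role of the bound constraints $y_j \geq 0$ explicit, which matches the paper's reasoning.
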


\subsection{Variable fixing}\label{sec:formulationE:fixing}


Next we present some conditions which allow the fixing of variables. In 
contrast to \myref{APC1}, for which we only have a condition based on an upper 
bound on the optimal objective function value, for \myref{APC2} we also have a 
condition 
which can be utilized with any known lower bound on the optimal objective 
function value of the \APCP.

\begin{theorem}\label{thm:feascutsElloumi}
	Let $LB$ \blue{be} a lower bound on the optimal objective function value 
	of the \APCP. Then 
	\begin{alignat}{3}
	u_k & = 1 \qquad && \forall k \in \{2, \dots, K\}, d_k \leq LB 
	\label{eq:feascuts_uk_LB}
	\end{alignat}
	are valid equalities for the formulation \myref{APC2} for the \APCP, i.e., 
	when adding \eqref{eq:feascuts_uk_LB} to 
	\myref{APC2}, the set of feasible solutions does not change.
\end{theorem}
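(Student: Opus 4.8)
The plan is to recall the semantics of the variable $u_k$ and argue that in any feasible solution of \myref{APC2}, the objective function value is at least the optimal objective function value, which is at least $LB$; and then to deduce that $u_k$ must equal one whenever $d_k \leq LB$. First I would observe that by the definition of the objective \eqref{pc3:ojb}, together with the linking constraints \eqref{pc3:ulink}, the value of the objective in any feasible solution $(u,y)$ is exactly $d_{k^*}$, where $k^*$ is the largest index with $u_{k^*}=1$ (or $d_1$ if no $u_k$ is one), since the telescoping sum $d_1 + \sum_{k=2}^{k^*}(d_k - d_{k-1})$ collapses to $d_{k^*}$. Hence if some $u_k = 0$ with $d_k \leq LB$, then — using \eqref{pc3:ulink} again, which forces $u_{k'}=0$ for all $k' \geq k$ — the objective value of that solution is at most $d_{k-1} < d_k \leq LB$, contradicting that $LB$ is a lower bound on the optimal objective function value of the \APCP (and that \myref{APC2} correctly models the \APCP, so every feasible solution of \myref{APC2} has objective value at least the optimum).

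More carefully, I would phrase it without reference to a hypothetical violating solution: take any feasible solution $(u,y)$ of \myref{APC2} and any $k$ with $d_k \leq LB$. Its objective value equals $d_{k^*}$ as above; since the objective value is at least the optimal value of the \APCP, which is at least $LB \geq d_k$, we get $d_{k^*} \geq d_k$, hence $k^* \geq k$ (the values $d_1 < d_2 < \dots < d_K$ are distinct and increasing), and therefore $u_k = 1$ by the definition of $k^*$ and the chain \eqref{pc3:ulink}. This shows \eqref{eq:feascuts_uk_LB} holds for every feasible solution, so adding these equalities does not remove any feasible solution; conversely they obviously do not add feasible solutions, so the feasible set is unchanged.

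There is no real obstacle here — the argument is essentially the observation that \myref{APC2} is a correct model and its objective is a monotone ``threshold'' encoding. The only point requiring a sentence of care is making explicit that the telescoping objective together with \eqref{pc3:ulink} means ``objective value $\geq d_k$'' is equivalent to ``$u_k = 1$''; once that equivalence is stated, the claim is immediate from $LB$ being a valid lower bound. I would keep the proof to three or four sentences along these lines.
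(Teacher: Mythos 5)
Your proposal is correct and follows essentially the same route as the paper's proof: $LB$ being a valid lower bound forces the objective value of any solution of \myref{APC2} to be at least $d_k$ whenever $d_k \leq LB$, and the threshold structure of the objective together with \eqref{pc3:ulink} then forces $u_k = 1$. You are in fact slightly more careful than the paper, which phrases the argument only for optimal solutions, whereas you make explicit that every feasible solution of \myref{APC2} has objective value at least the optimum of the \APCP and hence satisfies the equalities.
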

\begin{proof}
	Consider any optimal solution for \myref{APC2}.
	If $LB$ is a lower bound on the optimal objective function value 
	of the \APCP, then this optimal value is at least $d_k$ for any $k$ 
	such that $d_k \leq LB$. Therefore, $u_k = 1$ in this case.
\end{proof}

In Section \ref{sec:comparison:e} we present an iterative scheme for variable 
fixing based on the optimal solution of the linear programming relaxation of 
\myref{APC2} which can be seen as extension of Theorem 
\ref{thm:feascutsElloumi}.

\begin{theorem}\label{thm:optcutsElloumi}
	Let $UB$ be the objective function value of a feasible solution of 
	the \APCP. Then when adding
		\begin{alignat}{3}
		u_k & = 0 \qquad && \forall k \in \{2, \dots, K\}, d_k > UB 
		\label{eq:optcuts_uk_UB}
		\end{alignat}
	to \myref{APC2}, no optimal solution is cut off.
\end{theorem}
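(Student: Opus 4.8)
The plan is to establish the slightly stronger claim that \emph{every} optimal solution of \myref{APC2} already satisfies the equalities~\eqref{eq:optcuts_uk_UB}; then adding them clearly cannot remove any optimal solution. So let $(u^*,y^*)$ be an arbitrary optimal solution of \myref{APC2}, and recall that the distances are indexed so that $d_1 < d_2 < \dots < d_K$.

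First I would analyze the structure of $u^*$. By the linking constraints~\eqref{pc3:ulink}, the set $\{k \in \{2,\dots,K\} : u^*_k = 1\}$ is of the form $\{2, \dots, k^*\}$ for some $k^* \in \{1, \dots, K\}$, where $k^* = 1$ is understood to mean that all $u^*_k$ are zero. Substituting this into the objective~\eqref{pc3:ojb} and telescoping, the objective function value of $(u^*,y^*)$ equals $d_1 + \sum_{k=2}^{k^*}(d_k - d_{k-1}) = d_{k^*}$.

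Next I would invoke that \myref{APC2} is a valid formulation of the \APCP (established when the formulation was introduced), so its optimal value equals the optimal objective function value of the \APCP, which is at most $UB$ since a feasible solution with value $UB$ exists. Hence $d_{k^*} \leq UB$. Now take any $k \in \{2, \dots, K\}$ with $d_k > UB$. Then $d_k > UB \geq d_{k^*}$, and since the $d_k$ are strictly increasing in $k$ this forces $k > k^*$, so $u^*_k = 0$ by the choice of $k^*$. Thus $(u^*,y^*)$ satisfies~\eqref{eq:optcuts_uk_UB}, which finishes the argument.

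There is essentially no hard step here: the proof reduces to a short structural observation about the support of $u^*$ (immediate from~\eqref{pc3:ulink}), the telescoping of the objective, and the validity of \myref{APC2}. The only point meriting a word of care is ensuring the telescoping identity is applied to a genuinely down-closed prefix of indices; everything else is routine and mirrors the proof of Theorem~\ref{thm:feascutsElloumi}.
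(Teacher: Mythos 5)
Your proof is correct and follows essentially the same route as the paper's: the paper also argues that in any optimal solution the objective value is at most $UB$, hence cannot be greater than or equal to any $d_k > UB$, forcing $u_k = 0$. You merely make explicit (via the prefix structure from~\eqref{pc3:ulink} and the telescoping of~\eqref{pc3:ojb}) what the paper leaves implicit in the semantic reading of the $u$-variables.
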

\begin{proof}
	Consider any optimal solution for \myref{APC2}.
	If $UB$ is an upper bound on the optimal objective function 
	value of the \APCP, then this optimal value is at most $d_k$ for any 
	$k$ such that $d_k \geq UB$. As a consequence, this optimal value it not 
	greater or equal to any $d_k > UB$ and hence $u_k = 0$ in this case.
\end{proof}

\section{Polyhedral study \label{sec:polyhedral}}

In this section we provide a polyhedral study of our two integer 
programming formulations for the \APCP. 
We start by comparing the basic linear relaxations of the two formulations in 
Section~\ref{sec:comparison:basic}. Next, we detail how to obtain the best 
lower 
bound based on \myref{APC1}, which can be computed in polynomial time,
in Section~\ref{sec:comparison:d} and also give a combinatorial interpretation 
of this best bound. In Section~\ref{sec:comparison:e} we do the same for \myref{APC2}. Finally, we compare the two best lower bounds in 
Section~\ref{sec:comparison:comparede}.

\subsection{Comparison of basic linear relaxations}\label{sec:comparison:basic}

Whenever several integer programming formulations of a problem are available, 
it is an interesting question to compare the corresponding linear relaxations.
We note that for the  \PCP, 
\cite{ales2018} proved that the objective 
function values of the linear relaxations of 
\myref{PCE} and 
\myref{PCA} coincide. Furthermore, \cite{elloumi2004} showed that the objective 
function value of the linear relaxation of \myref{PCE} is always as least as 
good as the one of the linear relaxation of \myref{PC1}, and they demonstrated 
that the dominance 
might be strict by providing an instance where this is the case. Thus, in case 
of the  \PCP, both \myref{PCA} and \myref{PCE} dominate \myref{PC1}.

Let 
	$\mytag{APC1-R}$ be the linear relaxation of \myref{APC1}, i.e., \myref{APC1-R} 
	is
	\myref{APC1} without~\eqref{pc1:xbin} and~\eqref{pc1:ybin} and with the 
	constraints $0 \leq x_{ij}$ 
	for all $i,j \in \pts$ with $i\neq j$ and $0 \leq y_j \leq 1$ for all $j \in 
	\pts$.
	Let $\mytag{APC2-R}$ be 
	the linear relaxation of
	\myref{APC2}, i.e., \myref{APC2-R} is
	\myref{APC2} without~\eqref{pcE:ubin} and~\eqref{pcE:ybin} and with the 
	constraints $0 \leq u_k \leq 
	1$ 
	for all $k \in \{2, \dots, K\}$ and $0 \leq y_j \leq 
	1$ 
	for all $j \in \pts$.
To study \myref{APC1-R} and \myref{APC2-R}, we start by considering the 
following examples, which are illustrated in 
Figures \ref{fig:ex:daskinLRbetter} and \ref{fig:ex:elloumiLRbetter}.

\tikzstyle{vertex}=[circle,draw=black,thick,fill=black!0,minimum 
size=20pt,inner 
sep=0pt]
\tikzstyle{edge} = [bend left,draw,thick]
\tikzstyle{weight} = [font=\small]

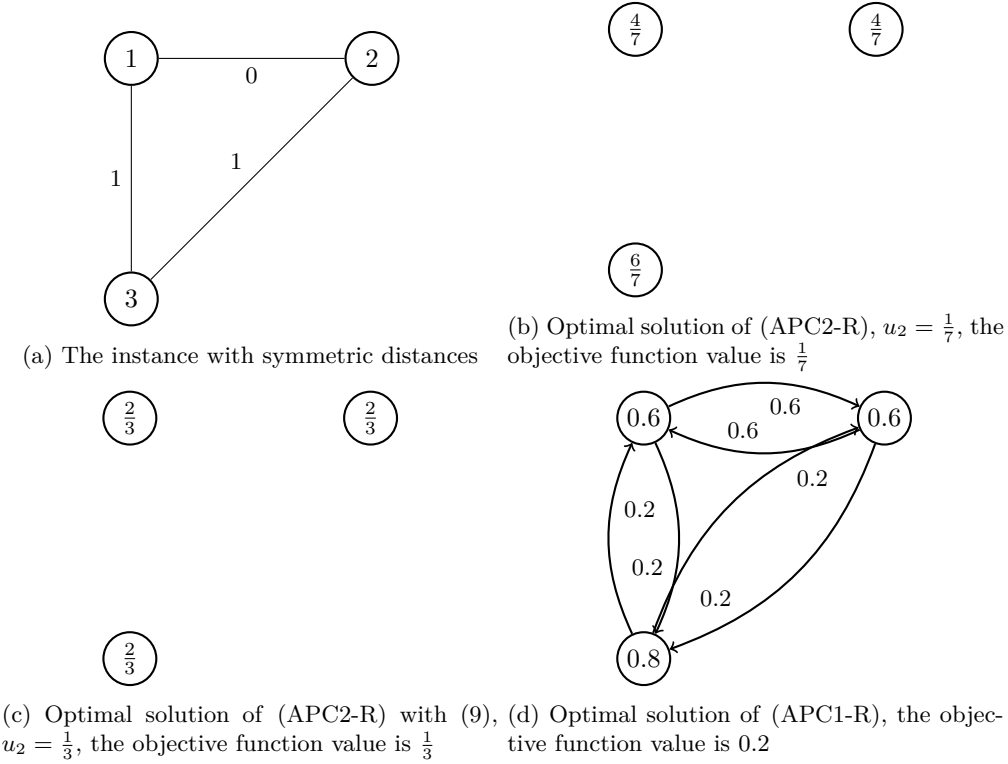
\begin{figure}[tbh]
	\begin{center}
	\begin{subfigure}{0.4\textwidth}
		\centering
		\begin{tikzpicture}[scale=1.6, auto,swap]
		\foreach \pos/\name in {{(1,1)/1}, {(1,-1)/3}, {(3,1)/2}}
		\node[vertex] (\name) at \pos {$\name$};
		\foreach \source/ \dest /\weight in {
			1/2/0,1/3/1,2/3/1}
		\path (\source) edge node[weight] 
		{$\weight$} 
		(\dest);
		\end{tikzpicture}
		\caption{The instance with symmetric 
		distances\label{ex:daskinLRbetter1}}
	\end{subfigure}
	\begin{subfigure}{0.4\textwidth}
		\centering
		\begin{tikzpicture}[scale=1.6, auto,swap]
		\foreach \pos/\name/\value in {{(1,1)/1/\frac{4}{7}}, 
		{(1,-1)/3/\elli{\frac{6}{7}}}, {(3,1)/2/\elli{\frac{4}{7}}}}
		\node[vertex] (\name) at \pos {$\value$};
		\end{tikzpicture}
		\caption{Optimal solution of \myref{APC2-R}, 
		$u_2=\frac{1}{7}$, the objective function value is \blue{$\frac{1}{7}$} 
		\label{ex:daskinLRbetter2}}
	\end{subfigure}

	\begin{subfigure}{0.4\textwidth}
		\centering
		\begin{tikzpicture}[scale=1.6, auto,swap]
		\foreach \pos/\name/\value in {{(1,1)/1/\frac{2}{3}}, 
		{(1,-1)/3/\frac{2}{3}}, {(3,1)/2/\frac{2}{3}}}
		\node[vertex] (\name) at \pos {$\value$};
		\end{tikzpicture}
		\caption{Optimal solution of \myref{APC2-R} with \eqref{cut:ukyj}, 
		$u_2=\frac{1}{3}$, the objective function value is $\frac{1}{3}$ 
		\label{ex:daskinLRbetter3}}
	\end{subfigure}
		\begin{subfigure}{0.4\textwidth}
		\centering
		\begin{tikzpicture}[scale=1.6, auto,swap]
		\foreach \pos/\name/\value in {{(1,1)/1/0.6}, 
			{(1,-1)/3/\elli{0.8}}, {(3,1)/2/\elli{0.6}}}
		\node[vertex] (\name) at \pos {$\value$};
		\foreach \source/ \dest /\weight in {
			1/2/0.6,1/3/0.2,2/3/0.2,2/1/0.6,3/1/0.2,3/2/0.2}
		\path (\source)  edge[bend left=25,draw,thick,->] node[near end, 
		weight] 
		{$\weight$} (\dest);
		\end{tikzpicture}
		\caption{Optimal solution of 
		\myref{APC1-R}, the objective function value is $0.2$ 
		\label{ex:daskinLRbetter4}}
	\end{subfigure}
	\caption{Illustration of Example 
		\ref{ex:daskinLRbetter}, \elli{in which} $p=2$ and 
		$\alpha=2$.\label{fig:ex:daskinLRbetter} The value in the nodes in 
		Figure \ref{ex:daskinLRbetter1} is the index of the node and 
		the values near the arcs are 
		the distances. The values in the nodes in 
		Figures \ref{ex:daskinLRbetter2}, \ref{ex:daskinLRbetter3} and 
		\ref{ex:daskinLRbetter4} 
		are the values of the $y$-variables in the optimal 
		solution, and the values near the arcs in Figure 
		\ref{ex:daskinLRbetter4} are the values of 
		the $x$-variables 
		in the optimal solution.}
		\end{center}
\end{figure}

\begin{examplex}\label{ex:daskinLRbetter}
	Let $\pts=\{1,2,3\}$, $p = 2$, $\alpha = 2$, 
	$d_{1,2} = 0$, 
	$d_{1,3} = d_{2,3} = 1$ 
	and $d_{ij} = d_{ji}$ for all $i,j\in 
	\pts$ with $i \neq j$.
	
	In the formulation \myref{APC2-R} we have $D = \{0,1\}$ and $D_{1} = D_{2} 
	= D_{3} = \{1\}$, so $K = 2$. An optimal solution of \myref{APC2-R} is  
	$y_1 = y_2 = \frac{4}{7}$, $y_3 = \frac{6}{7}$ and $u_2 = \frac{1}{7}$. 
	Thus, the optimal objective function value of \myref{APC2-R} is equal to 
	$\frac{1}{7}\approx 0.143$. 
	
	This solution is not feasible anymore when adding~\eqref{cut:ukyj} to 
	\myref{APC2-R}, as $u_2 + y_1 = \frac{5}{7} < 1$, which 
	\blue{violates}~\eqref{cut:ukyj} for $i=1$ and $d_k = 1$. 
	An optimal solution of \myref{APC2-R} with~\eqref{cut:ukyj} is given as 
	$y_1 = y_2 = y_3 = \frac{2}{3}$ and $u_2 = \frac{1}{3}$. Therefore, the 
	optimal objective function value of \myref{APC2-R} with~\eqref{cut:ukyj} is 
	equal to $\frac{1}{3}\approx 0.333$, which is larger than the optimal 
	objective function value of \myref{APC2-R}.

	An optimal solution for \myref{APC1-R} is $y_1 = y_2 = 0.6$, $y_3 = 0.8$, 
	$x_{1,2} = x_{2,1} = 0.6$, $x_{1,3} = x_{2,3} = x_{3,1} = x_{3,2} = 0.2$ 
	and $z=0.2$, so the optimal objective function value of \myref{APC1-R} is 
	equal to $0.2$. As a consequence, for this instance \myref{APC1-R} gives a 
	better bound for the \APCP than \myref{APC2-R}.
\end{examplex}

\begin{figure}[tbh]
	\begin{center}
	\begin{subfigure}{0.4\textwidth}
		\centering
		\begin{tikzpicture}[scale=1.6, auto,swap]
		\foreach \pos/\name in {{(1,1)/1}, {(1,-1)/3}, {(3,1)/2}}
		\node[vertex] (\name) at \pos {$\name$};
		\foreach \source/ \dest /\weight in {
			1/2/\elli{1},1/3/\elli{2},2/3/\elli{3}}
		\path (\source) edge node[weight] 
		{$\weight$} 
		(\dest);
		\end{tikzpicture}
		\caption{The instance with symmetric distances 
		\label{ex:elloumiLRbetter1}}
	\end{subfigure}
	\begin{subfigure}{0.4\textwidth}
	\centering
	\begin{tikzpicture}[scale=1.6, auto,swap]
	\foreach \pos/\name/\value in {{(1,1)/1/\frac{13}{22}}, 
		{(1,-1)/3/\blue{\frac{17}{22}}}, {(3,1)/2/\blue{\frac{14}{22}}}}
	\node[vertex] (\name) at \pos {$\value$};
	\foreach \source/ \dest /\weight in {
		2/1/\frac{6}{11},3/1/\frac{3}{11},3/2/\frac{2}{11},1/2/\frac{6}{11},1/3/\frac{3}{11},2/3/\frac{2}{11}}
	\path (\source)  edge[bend left=25,draw,thick,->] node[near end, 
	weight] 
	{$\weight$} (\dest);
	\end{tikzpicture}
	\caption{Optimal solution of 
		\myref{APC1-R}, even if \eqref{cut:sumdijxij_alphaz} is 
		added, the objective function value is $\frac{6}{11}$ 
		\label{ex:elloumiLRbetter2}}
\end{subfigure}

	\begin{subfigure}{0.4\textwidth}
		\centering
		\begin{tikzpicture}[scale=1.6, auto,swap]
		\foreach \pos/\name/\value in {{(1,1)/1/\frac{4}{7}}, 
		{(1,-1)/3/\frac{5}{7}}, {(3,1)/2/\frac{5}{7}}}
	\node[vertex] (\name) at \pos {$\value$};
	\foreach \source/ \dest /\weight in {
		1/2/\frac{3}{7},1/3/\frac{3}{7},2/3/\frac{2}{7},2/1/\frac{2}{7},3/1/\frac{2}{7},3/2/\frac{2}{7}}
	\path (\source)  edge[bend left=25,draw,thick,->] node[near end, 
	weight] 
	{$\weight$} (\dest);
		
		\end{tikzpicture}
		\caption{Optimal solution of \myref{APC1-R} with~\eqref{cut:yixij}, the 
		objective function value is $\frac{6}{7}$  
		\label{ex:elloumiLRbetter3}}
	\end{subfigure}
	\begin{subfigure}{0.4\textwidth}
		\centering
		\begin{tikzpicture}[scale=1.6, auto,swap]
		\foreach \pos/\name/\value in {{(1,1)/1/0.4}, 
			{(1,-1)/3/0.8}, {(3,1)/2/0.8}}
		\node[vertex] (\name) at \pos {$\value$};
		\end{tikzpicture}
			\caption{Optimal solution of \myref{APC2-R}, $u_2=0.2$, the 
			objective function value is $1.2$  
			\label{ex:elloumiLRbetter4}}
	\end{subfigure}
	\caption{Illustration of Example 
		\ref{ex:elloumiLRbetter}, \elli{in which} $p=2$ and 
		$\alpha=2$.\label{fig:ex:elloumiLRbetter} The value in the nodes in 
		Figure \ref{ex:elloumiLRbetter1} is the index of the node and 
		the values near the arcs are 
		the distances. The values in the nodes in 
		Figures \ref{ex:elloumiLRbetter2}, \ref{ex:elloumiLRbetter3} and 
		\ref{ex:elloumiLRbetter4} 
		are the values of the $y$-variables in the optimal 
		solution, and the values near the arcs in Figure 
		\ref{ex:elloumiLRbetter2} 
		and
		\ref{ex:elloumiLRbetter3} are the values of 
		the $x$-variables 
		in the optimal solution.}
			\end{center}
\end{figure}

\begin{examplex}\label{ex:elloumiLRbetter}
	Let $\pts=\{1,2,3\}$, $p = 2$, $\alpha = 2$, 
	$d_{1,2} = 1$, 
	$d_{1,3} = 2$, $d_{2,3} = 3$ 
	and $d_{ij} = d_{ji}$ for all $i,j\in 
	\pts$ with $i \neq j$.
	
	For \myref{APC1-R} an optimal solution is given as $x_{12} = x_{21} = 
	\frac{6}{11}$, $x_{13} = x_{31} = \frac{3}{11}$, $x_{23} = x_{32} = 
	\frac{2}{11}$, $y_1 = \frac{13}{22}$, $y_2 = \frac{14}{22}$, $y_3 = 
	\frac{17}{22}$ and the optimal objective function value of  \myref{APC1-R} 
	is $z = \frac{6}{11} \approx 0.545$. 
	This solution remains feasible when 
	adding~\eqref{cut:sumdijxij_alphaz}, so also for \myref{APC1-R} 
	with~\eqref{cut:sumdijxij_alphaz} the optimal 
	objective function value is $z = \frac{6}{11} \approx 0.545$.
	
	However, this solution is not feasible anymore when 
	adding~\eqref{cut:yixij} to 
	\myref{APC1-R}, as $y_1 + x_{12} = \frac{25}{22} > 1$, which 
	\elli{violates}~\eqref{cut:yixij} for $i=1$ and $j = 2$. 
	An optimal solution of \myref{APC2-R} with~\eqref{cut:yixij} is given as
	$x_{12} = x_{13} = \frac{3}{7}$, 
	$x_{21} = x_{23} = x_{31} = x_{32} = \frac{2}{7}$, 
	$y_1 = \frac{4}{7}$, $y_2 = y_3 = \frac{5}{7}$, 
	and $z = \frac{6}{7} \approx 0.857$. 
	This solution is feasible also when adding~\eqref{cut:assignedToLaterSet}.
	Therefore, the 
	optimal objective function value of \myref{APC1-R} 
	with~\eqref{cut:sumdijxij_alphaz} and~\eqref{cut:yixij}, and also of 
	\myref{APC1-R} 
	with~\eqref{cut:sumdijxij_alphaz},~\eqref{cut:yixij} and 
	\eqref{cut:assignedToLaterSet}
	is equal to $\frac{6}{7}\approx 0.857$.
	
	In the formulation \myref{APC2-R} we have $D = \{1,2,3\}$, $D_{1} = 
	\{2\}$, $D_{2} = \{3\}$,  
	$D_{3} = \{2,3\}$ and $K = 3$. An optimal solution of \myref{APC2-R} is  
	$y_1 = 0.4$, $y_2 = y_3 = 0.8$, $u_2 = 0.2$ and $u_3 = 0$. 
	Thus, the optimal objective function value of \myref{APC2-R} is equal to 
	$1.2$, which is larger than the optimal 
	objective function value of \myref{APC1-R}, even when adding the  
	inequalities~\eqref{cut:sumdijxij_alphaz},~\eqref{cut:yixij} and 
	\eqref{cut:assignedToLaterSet} to \myref{APC1-R}.
	As a consequence, for this instance \myref{APC2-R} gives a 
	better bound for the \APCP than \myref{APC1-R}.
\end{examplex}

Example~\ref{ex:daskinLRbetter} shows that the 
linear relaxation of \myref{APC1} might give strictly better bounds than the 
linear 
relaxation of \myref{APC2}.
In turn, Example~\ref{ex:elloumiLRbetter} shows that the 
linear relaxation of \myref{APC2} might give strictly better bounds than the 
linear 
relaxation of \myref{APC1}.
Thus, in the case of the \APCP the linear relaxations of \myref{APC1} and 
\myref{APC2} are not comparable.

Furthermore,
Example~\ref{ex:daskinLRbetter} also demonstrates the existence of an instance 
of the \APCP, where including~\eqref{cut:ukyj} into \myref{APC2-R} 
yields a strictly better bound than the one of \myref{APC2-R}.
Moreover, 
Example~\ref{ex:elloumiLRbetter} also shows that there exists an instance 
where adding~\eqref{cut:yixij} to \myref{APC1-R} improves the linear relaxation 
bound. 

\subsection{Best lower bound based on (APC1)}
\label{sec:comparison:d}

The aim of this section is to derive the best possible bound for \myref{APC1} 
when utilizing all inequalities derived in Section~\ref{sec:formulationD:cuts}.
To do so, we investigate
Theorem~\ref{thm:liftingValid} in more detail. In particular, it allows us to 
add new valid inequalities to the 
linear relaxation of \myref{APC1}, as soon as we have a lower bound $LB$.
Our  
hope is that including the new valid inequalities for the lower bound $LB$ will 
give us a new, even better lower bound, with which we can include new, stronger 
valid 
inequalities. This leads to an iterative approach to improve the lower bound on 
the optimal objective function value of the \APCP, 
which is analogous to the approach   
 \cite{gaar2022scaleable} have developed for the  \PCP. They 
proved that their approach for the  \PCP converges (i.e., including the 
valid 
inequalities for a given lower bound $LB$ does not give a better lower bound, 
but only $LB$ again) if and only if there is a fractional set cover solution 
with 
radius $LB$ that uses at most $p$ sets. 

In the following, we investigate a similar iterative approach for the \APCP by  
iteratively adding the inequalities from 
Theorem~\ref{thm:liftingValid}. 
Let $LB$ \elli{be} a lower bound on the optimal objective function 
value 
of the \APCP
and let 
\begin{subequations}
	\begin{alignat}{3}
	\mytag{APCLB} \qquad
	\mathcal{L}_\alpha(LB) = & \min & \obj \phantom{iiiii}  \\ 
	& \st~ &  \eqref{pc1:sumy}, &\eqref{pc1:sumx}, \eqref{pc1:xy} 
	\\
	&&\eqref{cut:yixij}, & 
	\eqref{pc1:dxlifted},
	\eqref{cut:sumdijxij_alphazlifted}\\
	&& 0 &\leq x_{ij} \qquad&& \forall i , j \in \pts, i 
	\neq j  	\label{apclb:x}
	\\
	&& 0 & \leq y_{j} \leq 1 && \forall j \in \pts \label{apclb:y}	
	\\
	&& z & \geq LB. \label{apclb:z}
	\end{alignat}
\end{subequations}
It follows from Theorem~\ref{thm:validInequDaskin},~\ref{thm:liftingValid} 
and~\ref{thm:optCutDaskin} 
that $\mathcal{L}_\alpha(LB)$ is again a lower bound on the optimal objective 
function value 
of the \APCP. Furthermore, it is easy to see that 
$\mathcal{L}_\alpha(LB) \geq LB$ holds.
We now want to establish a condition for the case that adding inequalities 
\blue{from Theorem \ref{thm:liftingValid}} for 
a lower bound $LB$ \blue{to \myref{APCLB}} does not improve the obtained bound 
$\mathcal{L}_\alpha(LB)$ 
anymore, i.e., a convergence-condition. It turns out that the following holds.
\begin{theorem}
	\label{thm:convergenceDaskin}
	Let $LB$ \elli{be} a lower bound on the optimal objective function value 
	of the \APCP. 
	
	Then $\mathcal{L}_\alpha(LB) = LB$ holds if and only if 
	there 
	is a feasible solution for 
	\begin{subequations} \label{eq:setcoverDaskin}
		\begin{alignat}{3}
		&\min &  \sum_{j \in \pts} y_j \phantom{iiiii}&\label{eq:afscoD} 
		\\       
		&\st~&\sum_{j \in \pts \setminus \{i\}:  d_{ij}  \leq LB}y_{j} 
		&\geq 
		\alpha(1-y_i) \qquad &&
		\forall i \in \pts \label{eq:afsc1aD} \\
		&&\sum_{j \in \pts \setminus (\pts_\beta \cup \{i\}):  d_{ij}  \leq 
			LB}y_{j} 
		&\geq 
		(\alpha-\beta)(1-y_i) \qquad &&
		\forall i \in \pts, \forall \beta \in \{1, \dots, \alpha\}, \forall 
		\pts_\beta 
		\subseteq 
		\pts\setminus \{i\}, |\pts_\beta| = \beta
		\label{eq:afsc1bD} \\
		&& 0 &\leq y_{j} \leq 1 \qquad && \forall j \in \pts \label{eq:afsc2D}
		\end{alignat}
	\end{subequations}
	with objective function value at most $p$.
\end{theorem}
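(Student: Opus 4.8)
The plan is to prove the two directions of the equivalence separately, using LP duality / optimality reasoning to characterize when $\mathcal{L}_\alpha(LB) = LB$.

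First, I would unpack what $\mathcal{L}_\alpha(LB) = LB$ means. Since we always have $\mathcal{L}_\alpha(LB) \geq LB$ and the constraint $z \geq LB$ forces $z \geq LB$ in any feasible point of \myref{APCLB}, the equality $\mathcal{L}_\alpha(LB) = LB$ holds precisely when \myref{APCLB} admits a feasible point with $z = LB$. So the whole theorem reduces to: \emph{there is a feasible $(x,y,z)$ for \myref{APCLB} with $z = LB$ if and only if \eqref{eq:setcoverDaskin} has a feasible $y$ with $\sum_j y_j \le p$.} I would then substitute $z = LB$ into the defining constraints of \myref{APCLB} and simplify. In \eqref{pc1:dxlifted}, setting $z = LB$ gives $LB\, y_i + \max\{LB,d_{ij}\} x_{ij} \leq LB$, i.e. $\max\{LB,d_{ij}\} x_{ij} \leq LB(1-y_i)$; when $d_{ij} \le LB$ this is $x_{ij} \le 1 - y_i$ (weaker than \eqref{cut:yixij}), and when $d_{ij} > LB$ this forces $x_{ij} \leq \tfrac{LB}{d_{ij}}(1-y_i) < 1-y_i$. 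Similarly \eqref{cut:sumdijxij_alphazlifted} with $z = LB$ becomes $\alpha LB\, y_i + \sum_{j} \max\{LB,d_{ij}\} x_{ij} \leq \alpha LB$. The key structural observation I would extract is that in any such point the "useful" assignment mass of $i$ can only go to facilities within distance $LB$: combined with \eqref{pc1:sumx} ($\sum_j x_{ij} = \alpha(1-y_i)$) and the per-edge bounds, any positive weight on an edge with $d_{ij} > LB$ is "too expensive" to let the $\alpha LB(1-y_i)$ budget be met, so it must be zero. That gives $x_{ij} = 0$ whenever $d_{ij} > LB$, hence $\sum_{j: d_{ij}\le LB} x_{ij} = \alpha(1-y_i)$ with $x_{ij} \le y_j$, which projected onto $y$ is exactly \eqref{eq:afsc1aD}.

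For the direction "$\mathcal{L}_\alpha(LB) = LB \Rightarrow$ \eqref{eq:setcoverDaskin} feasible with value $\le p$": given a feasible $(x^*,y^*,LB)$ of \myref{APCLB}, I would show $y^*$ is feasible for \eqref{eq:setcoverDaskin}. Constraint \eqref{eq:afsc1aD} follows from the previous paragraph. For \eqref{eq:afsc1bD}, fix $i$, $\beta$, and $\pts_\beta \subseteq \pts\setminus\{i\}$ with $|\pts_\beta| = \beta$. Here I would use \eqref{pc1:xy} and the fact that $x^*_{ij} = 0$ for $d_{ij} > LB$: the total assignment mass of $i$ to facilities \emph{not} in $\pts_\beta$ and within distance $LB$ is at least $\alpha(1-y^*_i) - \sum_{j\in\pts_\beta} x^*_{ij} \ge \alpha(1-y^*_i) - \sum_{j\in\pts_\beta} y^*_j$; but I actually want the cleaner bound $\ge (\alpha-\beta)(1-y^*_i)$, which uses $x^*_{ij} \le 1 - y^*_i$ for each $j \in \pts_\beta$ (from \eqref{pc1:dxlifted} with $z=LB$ as noted above) so that $\sum_{j\in\pts_\beta} x^*_{ij} \le \beta(1-y^*_i)$, and then $\sum_{j\notin\pts_\beta\cup\{i\}, d_{ij}\le LB} y^*_j \ge \sum_{j\notin\pts_\beta\cup\{i\}} x^*_{ij} \ge \alpha(1-y^*_i) - \beta(1-y^*_i) = (\alpha-\beta)(1-y^*_i)$. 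And $\sum_j y^*_j = p$ by \eqref{pc1:sumy}, so $y^*$ is a feasible solution of \eqref{eq:setcoverDaskin} with value $p$. For the converse direction, given $y$ feasible for \eqref{eq:setcoverDaskin} with $\sum_j y_j \le p$, I would first note we may assume $\sum_j y_j = p$ (raise some $y_j$'s; the constraints \eqref{eq:afsc1aD}--\eqref{eq:afsc1bD} have the covering sums on the left, so increasing $y$ preserves them). Then I must build $x$: for each $i$ with $y_i < 1$, I need a fractional assignment $x_{i\cdot}$ supported on $\{j\neq i : d_{ij}\le LB\}$ with $x_{ij} \le y_j$, $x_{ij}\le 1-y_i$, $\sum_j x_{ij} = \alpha(1-y_i)$, and also satisfying the budget \eqref{cut:sumdijxij_alphazlifted} — but since all support edges have $\max\{LB,d_{ij}\} = LB$, that budget reads $\alpha LB\, y_i + LB\sum_j x_{ij} = \alpha LB\, y_i + \alpha LB(1-y_i) = \alpha LB$, so it holds with equality automatically. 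So the real task is the existence of such an $x_{i\cdot}$, which is a transportation-feasibility (Hall-type / max-flow) condition; the conditions \eqref{eq:afsc1bD} for all $\pts_\beta$ are exactly the cut conditions guaranteeing this flow exists — this is where \eqref{eq:afsc1bD} earns its keep.

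The main obstacle I anticipate is this last existence argument: showing that \eqref{eq:afsc1aD} together with \emph{all} the subset-inequalities \eqref{eq:afsc1bD} is equivalent to the feasibility of the per-point transportation problem with the simultaneous upper bounds $x_{ij} \le \min\{y_j, 1 - y_i\}$. I would handle it via a Gale--Hoffman / max-flow min-cut argument on the bipartite graph between $\{i\}$ and its eligible facilities $\{j \ne i : d_{ij} \le LB\}$: a fractional assignment with supplies $\alpha(1-y_i)$, edge capacities $\min\{y_j, 1-y_i\}$ exists iff for every subset $T$ of eligible facilities the capacity of the complementary cut is at least $\alpha(1-y_i)$; writing $\pts_\beta$ for the "saturated at $1-y_i$" part of $T$ and the rest contributing $y_j$, the cut condition becomes precisely \eqref{eq:afsc1bD} with $\beta = |\pts_\beta|$ (and $\beta = 0$ giving \eqref{eq:afsc1aD}). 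A secondary subtlety is making sure that when I build $x$ point-by-point the \emph{same} $y$ works for all $i$ simultaneously — but this is immediate since the constraints \eqref{pc1:xy}, \eqref{cut:yixij}, \eqref{pc1:dxlifted} are all "per edge" or "per point $i$" and never couple the $x_{i\cdot}$ blocks across different $i$, while $y$ was fixed up front; the only global constraint on $y$ is \eqref{pc1:sumy}, already arranged. I would close by noting \eqref{cut:assignedToLaterSet} / Theorem~\ref{thm:optCutDaskin} is not needed here since \myref{APCLB} does not include it, and the lifted inequalities already do the work.
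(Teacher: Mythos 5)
Your proposal is correct and follows essentially the same route as the paper's proof: the forward direction extracts $x_{ij}=0$ for $d_{ij}>LB$ from \eqref{cut:sumdijxij_alphazlifted} combined with \eqref{pc1:sumx} and then projects onto $y$ exactly as the paper does, and the reverse direction normalizes $y$ to sum to $p$ and builds $x_{i\cdot}$ under the capacities $\min\{y_j,1-y_i\}$. Your max-flow/Gale--Hoffman framing of the existence of $x_{i\cdot}$ collapses, for the star graph rooted at $i$, to the single condition $\sum_{j:d_{ij}\le LB}\min\{y_j,1-y_i\}\ge\alpha(1-y_i)$, which is verified by the same three-case analysis on the set of facilities with $y_j>1-y_i$ that the paper uses in Part~2.
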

\begin{proof}
	We prove each of the two sides of the equivalence in a separate part for 
	the sake of clarity.
	
	\textbf{Part 1:}
	Assume $LB$ is such that $\mathcal{L}_\alpha(LB) = LB$ holds. Let 
	$(x^*,y^*,z^*)$ be an optimal solution of \myref{APCLB} in this 
	case, so $\mathcal{L}_\alpha(LB) = z^* = LB$. We will finish this part of 
	the proof by showing that 
	$y^*$ is a 
	feasible solution for \eqref{eq:setcoverDaskin} with objective function 
	value at most $p$.
	
	It is easy to see that \eqref{eq:afsc2D} is satisfied because 
	of~\eqref{apclb:y} and that the objective function value~\eqref{eq:afscoD}  
	of $y^*$ is $p$ because of \eqref{pc1:sumy}.
	
	In order to show that $y^*$ fulfills~\eqref{eq:afsc1aD}, we can exploit 
	\eqref{cut:sumdijxij_alphazlifted} and get that
	\begin{alignat*}{3}
	\alpha LB y^*_i +\sum_{j \in \pts \setminus \{i\}} \max\{LB,d_{ij}\} 
	x^*_{ij} & 
	\leq  \alpha LB \qquad &&  \forall i \in \pts
	\end{alignat*}
	holds, which, when splitting the $x^*_{ij}$ according to their distance 
	$d_{ij}$, is equivalent to 
	\begin{alignat*}{3}
	\alpha LB y^*_i 
	+LB\sum_{j \in \pts \setminus \{i\}:  d_{ij}  \leq LB} x^*_{ij}
	+\sum_{j \in \pts \setminus \{i\}:  d_{ij}  > LB} d_{ij} x^*_{ij}
	 & 
	\leq  \alpha LB \qquad &&  \forall i \in \pts.
	\end{alignat*}
	Now we can use~\eqref{pc1:sumx} for the first sum with $x^*_{ij}$ and obtain
	\begin{alignat*}{3}
	\alpha LB y^*_i 
	+LB
	\left( \alpha(1-y^*_i) -  \sum_{j \in \pts \setminus \{i\}:  d_{ij}  > LB} 
	 x^*_{ij} \right)
	+\sum_{j \in \pts \setminus \{i\}:  d_{ij}  > LB} d_{ij} x^*_{ij}
	& 
	\leq  \alpha LB \qquad &&  \forall i \in \pts,
	\end{alignat*}
	which can be simplified to
		\begin{alignat*}{3}
	\sum_{j \in \pts \setminus \{i\}:  d_{ij}  > LB} (d_{ij} - LB) x^*_{ij}
	& 
	\leq  0 \qquad &&  \forall i \in \pts.
	\end{alignat*}
	On the left hand-side this is a sum of non-negative terms, because  
	$x^*_{ij} \geq 0$ due to~\eqref{apclb:x} and for each term in the sum 
	$(d_{ij} - LB) > 0$ holds. Thus, the only way that this can be satisfied 
	is 
	that $x^*_{ij} = 0$ for all $j\in \pts \setminus \{i\}$ such that $d_{ij} > 
	LB$. 
	This, together with~\eqref{pc1:sumx} and~\eqref{pc1:xy} implies that
	\begin{alignat}{3}\label{eq:proofD1}
	\alpha(1- y^*_i) 
	= \sum_{j \in \pts \setminus \{i\}} x^*_{ij}
	= \sum_{j \in \pts \setminus \{i\}:  d_{ij}  \leq LB} x^*_{ij}
	&\leq \sum_{j \in \pts \setminus \{i\}:  d_{ij}  \leq LB} y^*_{j}
	\qquad &&  \forall i \in \pts,
	\end{alignat}
	so $y^*$ fulfills~\eqref{eq:afsc1aD}.
	
	What is left to show is that $y^*$ satisfies~\eqref{eq:afsc1bD}. Towards 
	this end, we can use~\eqref{pc1:xy}, \eqref{eq:proofD1} 
	and~\eqref{cut:yixij} to obtain
	\begin{alignat*}{3}
	\sum_{j \in \pts \setminus (\pts_\beta \cup \{i\}):  d_{ij}  \leq 
		LB}y^*_{j} 
	&\geq 
	\sum_{j \in \pts \setminus (\pts_\beta \cup \{i\}):  d_{ij}  \leq 
		LB}x^*_{ij}
    \geq 	
    \sum_{j \in \pts \setminus \{i\}:  d_{ij}  \leq 
    	LB}x^*_{ij}
    -
        \sum_{j \in \pts_\beta }x^*_{ij}\\
    &\geq
    \alpha(1- y^*_i)
    - |\pts_\beta|(1-y^*_i)\\
    &= (\alpha - \beta)(1-y^*_i) \qquad \qquad \qquad 
	\forall i \in \pts, \forall \beta \in \{1, \dots, \alpha\}, \forall 
	\pts_\beta 
	\subseteq 
	\pts\setminus \{i\}, |\pts_\beta| = \beta,
	\end{alignat*}	
	so $y^*$ fulfills~\eqref{eq:afsc1bD}. Thus, $y^*$ is a 
	feasible solution for \eqref{eq:setcoverDaskin} with objective function 
	value at most $p$.
	
	\textbf{Part 2:}
	Assume $LB$ is such that there is a feasible solution $\yNewOpt$ 
	for~\eqref{eq:setcoverDaskin} with objective function value \blue{of} at 
	most $p$. We 
	will finish this part of the proof in four steps. 
	In the first step we utilize $\yNewOpt$ to construct $y^*$, which is 
	feasible 
	for~\eqref{eq:setcoverDaskin} and has \blue{an} objective function value 
	$p$. 
	In the second step we use $y^*$ to construct $\yNewOpti$ for 
	each $i \in \pts$ and show that $\yNewOpti$ has a particular property. 
	In the third step we use $\yNewOpti$ to construct $x^*$.
	In the fourth step we show that  
	$(x^*,y^*,z^*)$ with $z^* = LB$ is a feasible solution for 
	\myref{APCLB}, which implies 
	that $\mathcal{L}_\alpha(LB) = LB$ holds.
	
	We start with the first step by constructing $y^*$. Let $\pNewOpt$ be the 
	objective function value~\eqref{eq:afscoD} of $\yNewOpt$, so $\pNewOpt = 
	\sum_{j \in \pts} \yNewOpt_j$. It follows that $\pNewOpt \leq p$, as 
	$\yNewOpt$ has objective function value 
	at most $p$.
	We now construct $y^*$ as $y^*_j = \yNewOpt_j + 
	(1-\yNewOpt_j)\frac{p-\pNewOpt 
	}{|\pts| - \pNewOpt}$ for all $j \in \pts$. 
	We have that $0 \leq \frac{p-\pNewOpt}{|\pts| - \pNewOpt}< 1$ because 
	$\pNewOpt \leq p$ and  $p < |\pts|$. As a consequence, 
	 $y^*_j$ fulfills $0 
	\leq \yNewOpt_j \leq y^*_j \leq 1$ for all $j \in \pts$ because $\yNewOpt$ 
 	fulfills~\eqref{eq:afsc2D}. 
 	Furthermore, it holds that
 	\begin{align*}
 	\sum_{j \in \pts} y^*_j 
 	=\sum_{j \in \pts} \left( \yNewOpt_j + 
 	(1-\yNewOpt_j)\frac{p-\pNewOpt 
 	}{|\pts| - \pNewOpt} \right)
 	= \sum_{j \in \pts}  \yNewOpt_j
 	+ \frac{p-\pNewOpt 
 	}{|\pts| - \pNewOpt} \sum_{j \in \pts} (1-\yNewOpt_j)
 	= \pNewOpt + \frac{p-\pNewOpt 
 	}{|\pts| - \pNewOpt} (|\pts| - \pNewOpt)
 	= p.
 	\end{align*} 
 	Thus, $y^*$ is feasible 
	for~\eqref{eq:setcoverDaskin} and has objective function value $p$.
	
	We proceed with the second step by constructing $\yNewOpti$. 
	For all $i \in \pts$ we define $\yNewOpti$ in such a way that 
	$\yNewOpti_j = 
	\min\{y^*_j,1-y^*_i\}$ for all $j \in \pts$, so in particular $\yNewOpti$ 
	is the component-wise minimum of $y^*$ and $(1-y^*_i)$ and $\yNewOpti \leq 
	y^*$ holds. We will now show 
	that $\yNewOpti$ fulfills
		\begin{alignat}{3}\label{eq:propertyy}
\sum_{j \in \pts \setminus \{i\}:  d_{ij}  \leq LB} \yNewOpti_j
& 
\geq  \alpha(1-y^*_i) \qquad &&  \forall i \in \pts.
\end{alignat}
To do so, let $N\newOpt_i = \{j \in \pts\setminus\{i\}:  d_{ij}  \leq LB \text{ 
and } \yNewOpti_j < y^*_j  \}$, so $N\newOpt_i$ is the set of indices $j$ that 
appear in the sum on the left hand-side of~\eqref{eq:propertyy} and fulfill 
$y^*_j > 1-y^*_i = \yNewOpti_j$. 

If $|N\newOpt_i| = 0$, then $\yNewOpti_j = y^*_j$ for each term in the sum 
of~\eqref{eq:propertyy} and thus~\eqref{eq:propertyy} is satisfied because 
$y^*$ fulfills~\eqref{eq:afsc1aD}.

If $|N\newOpt_i| \geq \alpha$, then $\yNewOpti_j = 1-y^*_i$ for all $j \in  
N\newOpt_i$ implies that
\begin{alignat*}{3}
\sum_{j \in \pts \setminus \{i\}:  d_{ij}  \leq LB} \yNewOpti_j
&\geq 
\sum_{j \in N\newOpt_i} \yNewOpti_j
= \sum_{j \in N\newOpt_i} (1-y^*_i)
= |N\newOpt_i|(1-y^*_i)
\geq \alpha(1-y^*_i),
\end{alignat*}
so also in this case~\eqref{eq:propertyy} is fulfilled.

If $0 < |N\newOpt_i| < \alpha$, then\blue{~\eqref{eq:afsc1bD}} for $\beta = 
|N\newOpt_i|$ and $\pts_\beta = N\newOpt_i$ together with the fact that 
$\yNewOpti_j = 1-y^*_i$ for all $j \in \pts_\beta = N\newOpt_i$ shows that
\begin{alignat*}{3}
\sum_{j \in \pts \setminus \{i\}:  d_{ij}  \leq LB} \yNewOpti_j
&= 
\sum_{j \in \pts \setminus (\pts_\beta \cup \{i\}):  d_{ij}  \leq LB} 
\yNewOpti_j
+ 
\sum_{j \in \pts_\beta} \yNewOpti_j\\
& 
\geq  (\alpha-\beta)(1-y^*_i) +  \beta(1-y^*_i)
=  \alpha(1-y^*_i),
\end{alignat*}
so also in this case~\eqref{eq:propertyy} is fulfilled.
As a consequence, $\yNewOpti$ satisfies~\eqref{eq:propertyy} in all cases, so 
for all $i \in \pts$.  

	We continue with the third step, i.e., we now construct $x^*$.
	To do so, we first fix a point $i\in \pts$. 
	Then let
	$j_i$ be such that 
	$\sum_{j \in S_{ij_{i}}\setminus \{i\}} \yNewOpti_{j} < \alpha(1-y^*_i)$ 
	and such that
	$\sum_{j \in (S_{ij_{i}} \cup \{j_{i}\})\setminus \{i\}} \yNewOpti_{j} 
	\geq 
	\alpha(1-y^*_i)$. Clearly such a $j_{i}$ exists and $d_{ij_i} \leq LB$ 
	because of~\eqref{eq:propertyy}. Then we set 
	$x^*_{ij} = \yNewOpti_{j}$ if $j \in  S_{ij_{i}}\setminus \{i\}$, we 
	set 
	$x^*_{ij} = \alpha(1-y^*_i) - \sum_{j' \in  S_{ij_{i}}\setminus \{i\}} 
	\yNewOpti_{j'}$ if 
	$j = j_i$ and we set 
	$x^*_{ij} = 0$ otherwise.
	Note that this construction implies that $x^*_{ij} = 0$ for all $j$ such 
	that $d_{ij} > LB$. 
	
	Finally, we are able to do the fourth step, i.e., we show that 
	$(x^*,y^*,z^*)$ with $z^* = LB$ is feasible for \myref{APCLB}.
	By construction, $x^*_{ij} \geq 0$, $x^*_{ij} \leq \yNewOpti_{j} \leq 
	y^*_j$ and $x^*_{ij} \leq \yNewOpti_{j} \leq 
	1-y^*_i$ for all $i,j \in \pts$ with $j\neq i$, 
	so $(x^*,y^*,z^*)$ 
	fulfills~\eqref{apclb:x},~\eqref{pc1:xy} and~\eqref{cut:yixij}.
	Also $\sum_{j \in \pts \setminus \{i\}} x^*_{ij} = \alpha(1-y^*_i)$ by 
	construction, so~\eqref{pc1:sumx} holds.
	Moreover, by construction $y^*$ is a feasible solution 
	of~\eqref{eq:setcoverDaskin} and has objective function value $p$, so it 
	fulfills~\eqref{apclb:y} and~\eqref{pc1:sumy}. 
	Furthermore $z^* = LB$, so clearly~\eqref{apclb:z} is satisfied.
	
	The inequality~\eqref{pc1:dxlifted} is fulfilled if $d_{ij} > LB$, because 
	then $x^*_{ij} = 0$ and thus $LBy^*_i \leq LB = z^*$ is satisfied as 
	we have already shown that~\eqref{apclb:y} holds. If $d_{ij} \leq LB$, then 
	the inequality is $LB(y^*_i + 
	x^*_{ij}) \leq LB = z^*$, which is fulfilled because we already know 
	that~\eqref{cut:yixij} is satisfied. Thus, in any case $(x^*,y^*,z^*)$ 
	fulfills~\eqref{pc1:dxlifted}.
	
	Finally, we consider~\eqref{cut:sumdijxij_alphazlifted}. We can utilize 
	$x^*_{ij} = 0$ whenever $d_{ij}> LB$ and the already 
	shown~\eqref{pc1:sumx}  to obtain 
	\begin{align*}
\alpha LB y^*_i +\sum_{j \in \pts \setminus \{i\}} \max\{LB,d_{ij}\} x^*_{ij} 
	&= 
\alpha LB y^*_i +LB\sum_{j \in \pts \setminus \{i\}: d_{ij} \leq LB} x^*_{ij}	
=
\alpha LB y^*_i +LB\sum_{j \in \pts \setminus \{i\}} x^*_{ij} \\
&= 
\alpha LB y^*_i + LB\alpha(1-y^*_i) = LB\alpha = \alpha z^*,
	\end{align*}
	so~\eqref{cut:sumdijxij_alphazlifted} holds for  $(x^*,y^*,z^*)$.
	Therefore, 
	$(x^*,y^*,z^*)$ with $z^* = LB$ is a feasible solution for 
	\myref{APCLB}, which implies 
	that $\mathcal{L}_\alpha(LB) = LB$ holds.
\end{proof}

When comparing Theorem~\ref{thm:convergenceDaskin} to the corresponding 
result for the  \PCP, it becomes obvious that \eqref{eq:setcoverDaskin} 
is closely related to a fractional set 
cover problem, where every set has to be covered $\alpha$ times.

\blue{We note that 
the right 
hand-side of~\eqref{eq:afsc1aD} is $\alpha(1-y_i)$, instead of 
$\alpha$, which 
would be the generalization of the result of \cite{gaar2022scaleable}. This 
is caused by 
the fact that $i$ does not need to be covered if it is opened in the 
\APCP, while in the \PCP each point needs to be covered}.
Moreover, the inequalities~\eqref{eq:afsc1bD} are completely new. They make 
sure 
that a set cover property is fulfilled not only for all points at most $LB$ 
away, but also for subsets of these points when removing at most $\alpha$ 
points. Note that~\eqref{eq:afsc1aD} can be interpreted as~\eqref{eq:afsc1bD} 
for $\beta = 0$.

Interestingly, we can pin point which of the inequalities of  
\myref{APCLB} are responsible for the existence 
of~\eqref{eq:afsc1bD}. 
To do so, let 
\begin{subequations} 
	\begin{alignat}{3}
	\mytag{AFSC}_\delta \qquad  &\min &  \sum_{j \in \pts} 
	y_j 
	\phantom{iiiii}&\label{eq:afsco} \\       
	&\st~&\sum_{j \in \pts \setminus \{i\}:  d_{ij}  \leq \delta}y_{j} 
	&\geq 
	\alpha(1-y_i) \qquad &&
	\forall i \in \pts \label{eq:afsc1a} \\
	&& 0 &\leq y_{j} \leq 1 \qquad && \forall j \in \pts \label{eq:afsc2}
	\end{alignat}
\end{subequations}
denote the fraction set cover problem for the \APCP for a given $\delta \in 
\R$. Note that $\myref{AFSC}_{LB}$  is a relaxation 
of~\eqref{eq:setcoverDaskin}.
Furthermore, let 
$LB$ \elli{be} a lower bound on the optimal objective function 
value 
of the \APCP
and let 
\begin{alignat*}{3}
\mytag{APCLB'} \qquad
\mathcal{L}'_\alpha(LB) = & \min && \obj \phantom{iiiii}  \\ 
& \st~ &&  \eqref{pc1:sumy}, \eqref{pc1:sumx}, \eqref{pc1:xy} 
\\
&&& 
\eqref{pc1:dxliftedRelax},
\eqref{cut:sumdijxij_alphazlifted}\\
&&&\eqref{apclb:x}, 
\eqref{apclb:y}, \eqref{apclb:z}.
\end{alignat*}
Note that when in \myref{APCLB} the constraint~\eqref{pc1:dxlifted} is relaxed 
to \eqref{pc1:dxliftedRelax} and \eqref{cut:yixij} is removed, then one obtains 
\myref{APCLB'}, so \myref{APCLB'} is a relaxation of \myref{APCLB}.
We are also able to give an interpretation of when the new lower bound  
$\mathcal{L}'_\alpha(LB)$ does not improve the previous lower bound $LB$ 
in the 
following theorem.
\begin{theorem}
	\label{thm:convergenceDaskinRelaxation}
	Let $LB$ \elli{be} a lower bound on the optimal objective function value 
	of the \APCP. 
	
	Then $\mathcal{L}'_\alpha(LB) = LB$ holds if and only if 
	there 
	is a feasible solution for $\myref{AFSC}_{LB}$ 
	with objective function value at most $p$.
\end{theorem}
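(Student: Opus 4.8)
The plan is to mirror the two-part argument in the proof of Theorem~\ref{thm:convergenceDaskin}, using throughout that $\mathcal{L}'_\alpha(LB)\ge LB$ holds automatically by~\eqref{apclb:z}, so that it suffices to characterise when $\mathcal{L}'_\alpha(LB)\le LB$. For the ``only if'' direction, suppose $\mathcal{L}'_\alpha(LB)=LB$ and let $(x^*,y^*,z^*)$ be an optimal solution of \myref{APCLB'}, so $z^*=LB$. I would reuse verbatim the computation from Part~1 of the proof of Theorem~\ref{thm:convergenceDaskin}: substituting $z^*=LB$ into~\eqref{cut:sumdijxij_alphazlifted}, splitting the left-hand sum according to whether $d_{ij}\le LB$, and applying~\eqref{pc1:sumx} yields $\sum_{j\in\pts\setminus\{i\}:\,d_{ij}>LB}(d_{ij}-LB)x^*_{ij}\le 0$, which forces $x^*_{ij}=0$ whenever $d_{ij}>LB$ because every term is nonnegative. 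Together with~\eqref{pc1:sumx} and~\eqref{pc1:xy} this gives $\alpha(1-y^*_i)=\sum_{j\in\pts\setminus\{i\}:\,d_{ij}\le LB}x^*_{ij}\le\sum_{j\in\pts\setminus\{i\}:\,d_{ij}\le LB}y^*_j$, i.e., $y^*$ satisfies~\eqref{eq:afsc1a}; moreover~\eqref{apclb:y} gives~\eqref{eq:afsc2} and~\eqref{pc1:sumy} gives objective value $p$, so $y^*$ is feasible for $\myref{AFSC}_{LB}$ with objective value at most $p$. Note that, in contrast to Theorem~\ref{thm:convergenceDaskin}, neither~\eqref{cut:yixij} nor the inequalities~\eqref{eq:afsc1bD} enter here, precisely because \myref{APCLB'} omits~\eqref{cut:yixij}.

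For the ``if'' direction, let $\yNewOpt$ be feasible for $\myref{AFSC}_{LB}$ with objective value at most $p$. First, exactly as in Part~2 of the proof of Theorem~\ref{thm:convergenceDaskin}, I would rescale it to $y^*_j=\yNewOpt_j+(1-\yNewOpt_j)\frac{p-\pNewOpt}{|\pts|-\pNewOpt}$ with $\pNewOpt=\sum_{j\in\pts}\yNewOpt_j\le p$; then $0\le\yNewOpt_j\le y^*_j\le 1$ for all $j$ and $\sum_{j\in\pts}y^*_j=p$, and since $y^*\ge\yNewOpt$ componentwise the left-hand side of~\eqref{eq:afsc1a} does not decrease while its right-hand side does not increase, so $y^*$ still satisfies~\eqref{eq:afsc1a}. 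Now the key simplification: since \myref{APCLB'} no longer contains~\eqref{cut:yixij}, I can build $x^*$ directly from $y^*$, skipping the auxiliary truncation $\yNewOpti$ used in Theorem~\ref{thm:convergenceDaskin}. For $i\in\pts$ with $y^*_i=1$ set $x^*_{ij}=0$ for all $j$; otherwise let $j_i$ be such that $\sum_{j\in S_{ij_i}\setminus\{i\}}y^*_j<\alpha(1-y^*_i)$ and $\sum_{j\in(S_{ij_i}\cup\{j_i\})\setminus\{i\}}y^*_j\ge\alpha(1-y^*_i)$, set $x^*_{ij}=y^*_j$ for $j\in S_{ij_i}\setminus\{i\}$, set $x^*_{ij_i}=\alpha(1-y^*_i)-\sum_{j'\in S_{ij_i}\setminus\{i\}}y^*_{j'}$, and $x^*_{ij}=0$ otherwise. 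Inequality~\eqref{eq:afsc1a} ensures such a $j_i$ exists with $d_{ij_i}\le LB$, so $x^*_{ij}=0$ whenever $d_{ij}>LB$, and by construction $0\le x^*_{ij}\le y^*_j$. With $z^*=LB$, checking feasibility of $(x^*,y^*,z^*)$ for \myref{APCLB'} is then routine: \eqref{apclb:x}, \eqref{pc1:xy}, \eqref{pc1:sumx} and~\eqref{pc1:sumy} hold by construction, \eqref{apclb:y} and~\eqref{apclb:z} are immediate, \eqref{pc1:dxliftedRelax} holds because $\max\{LB,d_{ij}\}x^*_{ij}$ equals $LB\,x^*_{ij}\le LB$ if $d_{ij}\le LB$ and $0$ otherwise, and~\eqref{cut:sumdijxij_alphazlifted} follows from the same one-line identity as in Theorem~\ref{thm:convergenceDaskin}, namely $\alpha LB\,y^*_i+LB\sum_{j\in\pts\setminus\{i\}}x^*_{ij}=\alpha LB\,y^*_i+LB\alpha(1-y^*_i)=\alpha LB=\alpha z^*$. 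Hence $\mathcal{L}'_\alpha(LB)\le LB$, giving equality.

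I do not expect a genuine obstacle here: the ``only if'' direction is literally a sub-argument of the corresponding step in Theorem~\ref{thm:convergenceDaskin}, and the ``if'' direction is a strict simplification of it, since dropping~\eqref{cut:yixij} is exactly what allows the assignment variables to be reconstructed greedily from $y^*$ using only the plain covering inequality~\eqref{eq:afsc1a}, so that the stronger set-cover inequalities~\eqref{eq:afsc1bD} are not required. The only points needing a little care are verifying that the rescaling preserves~\eqref{eq:afsc1a} (by monotonicity in each $y_j$) and that the greedy breakpoint $j_i$ lies within distance $LB$ of $i$ (which uses that $\sigma_i$ orders the points with $d_{ij}\le LB$ before all points with $d_{ij}>LB$).
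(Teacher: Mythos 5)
Your proposal is correct and matches the paper's intent exactly: the paper's own proof is a one-line remark that one should repeat the argument of Theorem~\ref{thm:convergenceDaskin} with $\yNewOpti = y^*$ for all $i$, which is precisely your construction of $x^*$ directly from $y^*$, and your Part~1 is the verbatim sub-argument the paper refers to. You have in effect written out the details the paper omits "for the sake of brevity," including the correct observation that only \eqref{pc1:dxliftedRelax} (not \eqref{pc1:dxlifted} or \eqref{cut:yixij}) needs to be checked, so the stronger inequalities \eqref{eq:afsc1bD} never arise.
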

\begin{proof}
	The proof of Theorem~\ref{thm:convergenceDaskinRelaxation} is a 
	straight-forward simplified 
	version of the proof of Theorem~\ref{thm:convergenceDaskin}, where 
	the construction of $\yNewOpti$ in the second part is replaced by using 
	$\yNewOpti = y^*$ for all $i \in \pts$.
	Thus, we omit 
	the proof for the sake of brevity.
\end{proof}

If we combine the knowledge of Theorem~\ref{thm:convergenceDaskin} 
and~\ref{thm:convergenceDaskinRelaxation}, then we can deduce that the 
inequalities \eqref{cut:yixij} and \eqref{pc1:dxlifted} \elli{in \myref{APCLB}} 
(instead of the weaker 
version \eqref{pc1:dxliftedRelax} \elli{ in \myref{APCLB'}}) are responsible 
for the existence 
of~\eqref{eq:afsc1bD} \elli{in~\eqref{eq:setcoverDaskin}}.  
\elli{Thus, the inequalities~\eqref{eq:afsc1bD}, which are not present in a 
straight-forward generalization of the results of \cite{gaar2022scaleable} for 
the \PCP to the \APCP, are caused by the inequalities \eqref{cut:yixij} and 
\eqref{pc1:dxlifted}.}

Furthermore, with the help of Theorem~\ref{thm:convergenceDaskin}  
and~\ref{thm:convergenceDaskinRelaxation} it is easy to see that whenever 
$\mathcal{L}_\alpha(LB) = LB$ and $\mathcal{L}'_\alpha(LB) = LB$ holds for some 
lower bound $LB$, then also $\mathcal{L}_\alpha(LB') = LB'$ and 
$\mathcal{L}'_\alpha(LB') = LB'$ holds for any $LB' > LB$, \elli{that is, if 
the lower bound $LB$ can not be improved by adding the valid inequalities from 
Theorem \ref{thm:liftingValid}, then also no larger lower bound can be improved 
this way.} Thus, it makes sense 
to define the largest possible lower bounds one can obtain with iteratively 
adding the valid inequalities \elli{from Theorem \ref{thm:liftingValid}}.
Let $LB_\alpha^{\#} = \min \{LB \in \mathbb{R}: \mathcal{L}_\alpha(LB) = 
LB\}$ and 
let ${LB_\alpha^{\#}}' = \min \{LB \in \mathbb{R}: 
\mathcal{L}'_\alpha(LB) 
= LB\}$. Our results imply the following relationship.
\begin{corollary}\label{cor:boundComparisonDaskin}
	It holds that $LB_\alpha^{\#} \geq {LB_\alpha^{\#}}'$. 
\end{corollary}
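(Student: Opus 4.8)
The plan is to obtain the corollary directly from the two convergence characterizations, Theorem~\ref{thm:convergenceDaskin} and Theorem~\ref{thm:convergenceDaskinRelaxation}, together with the observation (made right after the definition of $\myref{AFSC}_\delta$) that $\myref{AFSC}_{LB}$ is a relaxation of the system~\eqref{eq:setcoverDaskin}.

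Concretely, I would fix an arbitrary $LB\in\mathbb{R}$ with $\mathcal{L}_\alpha(LB) = LB$. By Theorem~\ref{thm:convergenceDaskin} this is equivalent to the existence of a feasible solution $y$ of~\eqref{eq:setcoverDaskin} with $\sum_{j\in\pts} y_j \leq p$. Since every constraint of $\myref{AFSC}_{LB}$ appears among the constraints of~\eqref{eq:setcoverDaskin} (the inequalities~\eqref{eq:afsc1a} are exactly~\eqref{eq:afsc1bD} for $\beta=0$, and the box constraints coincide), this same $y$ is feasible for $\myref{AFSC}_{LB}$ with objective value at most $p$. Invoking Theorem~\ref{thm:convergenceDaskinRelaxation} in the other direction, we conclude $\mathcal{L}'_\alpha(LB) = LB$. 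Hence $\{LB\in\mathbb{R} : \mathcal{L}_\alpha(LB) = LB\} \subseteq \{LB\in\mathbb{R} : \mathcal{L}'_\alpha(LB) = LB\}$, and taking the minimum over the smaller set cannot yield a smaller value, so $LB_\alpha^{\#} \geq {LB_\alpha^{\#}}'$, which is the claim. If one wants to be careful about the minima being attained, I would point to the monotonicity remark immediately preceding the corollary, which shows both index sets are closed upward rays, so the minima exist and set inclusion translates into the stated inequality.

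There is essentially no obstacle here: all the substantive work is already carried out in Theorems~\ref{thm:convergenceDaskin} and~\ref{thm:convergenceDaskinRelaxation}. The only point requiring a moment of attention is verifying that a feasible point of~\eqref{eq:setcoverDaskin} is literally feasible for $\myref{AFSC}_{LB}$ — i.e., that $\myref{AFSC}_{LB}$ genuinely is a relaxation of~\eqref{eq:setcoverDaskin} — but this is immediate from comparing the two constraint systems and is stated explicitly in the text.
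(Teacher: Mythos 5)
Your argument is correct and is precisely the reasoning the paper intends: the paper's proof is the one-line statement that the corollary follows from Theorems~\ref{thm:convergenceDaskin} and~\ref{thm:convergenceDaskinRelaxation}, and your write-up simply makes explicit the chain (fixed point of $\mathcal{L}_\alpha$ $\Rightarrow$ feasible point of~\eqref{eq:setcoverDaskin} with value at most $p$ $\Rightarrow$ feasible point of the relaxation $\myref{AFSC}_{LB}$ $\Rightarrow$ fixed point of $\mathcal{L}'_\alpha$, hence inclusion of fixed-point sets and the inequality of their minima). No gaps; this matches the paper's approach.
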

\begin{proof}
	This is a consequence of Theorem~\ref{thm:convergenceDaskin}  
	and~\ref{thm:convergenceDaskinRelaxation}.
\end{proof}

\elli{Next}, we point out that both $LB_\alpha^{\#}$ and ${LB_\alpha^{\#}}'$ 
can be 
computed efficiently.
\begin{theorem}
	 $LB_\alpha^{\#}$ and ${LB_\alpha^{\#}}'$ can be 
	computed in polynomial time.
\end{theorem}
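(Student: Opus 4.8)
The plan is to compute both quantities by probing the finitely many candidate values in $D$ and, for each candidate, solving a single linear program of size polynomial in $|\pts|$. No separation oracle or ellipsoid machinery is needed, because $\mathcal{L}_\alpha(LB)$ and $\mathcal{L}'_\alpha(LB)$ are themselves the optima of compact linear programs.

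First I would record that, for any fixed $LB \in \R$, the program \myref{APCLB} (respectively \myref{APCLB'}) has $O(|\pts|^2)$ variables and $O(|\pts|^2)$ constraints, so $\mathcal{L}_\alpha(LB)$ (respectively $\mathcal{L}'_\alpha(LB)$) can be computed in polynomial time; moreover, since~\eqref{apclb:z} already forces $z \geq LB$, testing whether $\mathcal{L}_\alpha(LB) = LB$ is the same as checking whether this optimal value is at most $LB$.

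The second and essential step is to localise $LB_\alpha^{\#}$ and ${LB_\alpha^{\#}}'$ inside $D$. Here I would invoke Theorem~\ref{thm:convergenceDaskin} (respectively Theorem~\ref{thm:convergenceDaskinRelaxation}): the predicate ``$\mathcal{L}_\alpha(LB) = LB$'' is equivalent to the existence of a feasible point of~\eqref{eq:setcoverDaskin} (respectively $\myref{AFSC}_{LB}$) with objective value at most $p$, and this system depends on $LB$ only through which pairs satisfy $d_{ij} \leq LB$. Hence the truth value of the predicate is constant on every interval $[d_k, d_{k+1})$ between consecutive values of $D$, and it is false for $LB < d_1$, because there every sum $\sum_{j \in \pts \setminus \{i\}:\, d_{ij} \leq LB} y_j$ is empty, so~\eqref{eq:afsc1aD} reads $0 \geq \alpha(1-y_i)$, which forces $y_i = 1$ for all $i$ and thus $\sum_{j \in \pts} y_j = |\pts| > p$. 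Consequently $\min\{LB \in \R : \mathcal{L}_\alpha(LB) = LB\} = \min\{d_k \in D : \mathcal{L}_\alpha(d_k) = d_k\}$, and likewise for ${LB_\alpha^{\#}}'$; the right-hand set is nonempty because the optimal value of the \APCP lies in $D$ and $\mathcal{L}_\alpha$ evaluated there equals it (it is a lower bound on that optimum by construction, while~\eqref{apclb:z} forces it to be at least that value).

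The algorithm is then: sort $D$, and for $d_k$ running through $D$ in increasing order solve \myref{APCLB} (respectively \myref{APCLB'}) with $LB := d_k$, returning the first $d_k$ whose optimal value is at most $d_k$; by the monotonicity of these conditions in $LB$ noted before Corollary~\ref{cor:boundComparisonDaskin} one could equivalently binary-search over $D$. Since $|D| \leq (|\pts|-1)|\pts|$ and each iteration solves a polynomial-size linear program, the procedure runs in polynomial time. The only real content is the localisation step, i.e., extracting from Theorem~\ref{thm:convergenceDaskin}/\ref{thm:convergenceDaskinRelaxation} the fact that the sought thresholds lie in $D$; everything else is routine, and in particular the exponentially many inequalities~\eqref{eq:afsc1bD} pose no difficulty here, as they occur only in the combinatorial characterisation~\eqref{eq:setcoverDaskin} and not in the compact program \myref{APCLB} one actually solves.
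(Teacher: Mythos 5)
Your proposal is correct and follows essentially the same route as the paper: both arguments rest on the observation that the fixed-point condition $\mathcal{L}_\alpha(LB)=LB$ can only change at values in $D$ (via the combinatorial characterisations of Theorems~\ref{thm:convergenceDaskin} and~\ref{thm:convergenceDaskinRelaxation}), so that $LB_\alpha^{\#},{LB_\alpha^{\#}}'\in D$, and then evaluate the polynomial-size linear programs \myref{APCLB} and \myref{APCLB'} at polynomially many candidates from $D$. The only cosmetic difference is that the paper iteratively rounds each improved bound up to the next element of $D$ rather than scanning or binary-searching $D$ directly; your added remarks on monotonicity and on the behaviour for $LB<d_1$ are correct and, if anything, make the localisation step more explicit than in the paper.
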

\begin{proof}
A trivial lower bound $LB$ on the optimal objective function value of the \APCP 
is given by $d_1$, the smallest element of $D$. 
For any given lower bound $LB$, the 
computation of $\mathcal{L}_\alpha(LB)$ requires to solve a linear program with 
a polynomial number of variables and constraints, and thus can be done in 
polynomial time.
Furthermore, there are only a 
polynomial number of potential values for $LB_\alpha^{\#}$, as clearly 
$LB_\alpha^{\#} \in D$ holds, because only for values in $D$ the included 
variables in 
the sum in the left hand-side of~\eqref{eq:afsc1aD} and~\eqref{eq:afsc1bD} 
change. Thus, whenever we have obtained some new lower bound $LB_\alpha^{\#}$, 
we we know that also $\min_{d_k \in D}\{d_k \geq LB_\alpha^{\#}\}$ is a lower 
bound. Therefore, it is possible to compute $LB_\alpha^{\#}$ in polynomial time.

By same arguments also ${LB_\alpha^{\#}}' \in D$ and  
${LB_\alpha^{\#}}'$ can be computed in polynomial time.
\end{proof}

\elli{Thus, not only for the  \PCP, but also for the \APCP the iterative 
improvement 
of the lower bound leads to an ultimate lower bound $LB_\alpha^{\#}$, 
which can be computed in polynomial time}. 

Finally, we want to discuss another interesting aspect about $LB_\alpha^{\#}$ 
and 
${LB_\alpha^{\#}}'$. We have seen in Example~\ref{ex:DaskinBasic} that 
adding the optimality-preserving inequalities~\eqref{cut:assignedToLaterSet} to 
a relaxed version of \myref{APC1} improved the bound obtained from the 
relaxation. Thus, it is a natural question if the bounds $LB_\alpha^{\#}$ 
and 
${LB_\alpha^{\#}}'$ could be further improved by 
adding~\eqref{cut:assignedToLaterSet} 
to \myref{APCLB} and \myref{APCLB'}, respectively. It turns our that this is 
not the case.

\begin{theorem}\label{thm:convergenceWithOptimalityInequ}
	Let $LB$ \blue{be} a lower bound on the optimal objective function value 
	of the \APCP and let $UB$ be the objective function value of a feasible 
	solution of the \APCP.
	Let $\mytag{APCLB^\circ}$ 
	be \myref{APCLB} with~\eqref{cut:assignedToLaterSet} 
	and~\eqref{eq:optcuts}, and denote the 
	optimal objective function value with $\mathcal{L}^{\circ}_\alpha(LB)$. 
	Let $\mytag{APCLB^{\prime\circ}}$ 
be \myref{APCLB'} with~\eqref{cut:assignedToLaterSet} and~\eqref{eq:optcuts}, 
and denote the optimal 
objective function value with $\mathcal{L}^{\prime\circ}_\alpha(LB)$.	

Then
\begin{itemize}
	\itemsep0em
	\item[(a)] $\mathcal{L}^{\circ}_\alpha(LB) = LB$ if and only if 
	$\mathcal{L}_\alpha(LB) = LB$, and
	\item[(b)] $\mathcal{L}^{\prime\circ}_\alpha(LB) = LB$ if and only if 
	$\mathcal{L}^{\prime}_\alpha(LB) = LB$.
\end{itemize}
\end{theorem}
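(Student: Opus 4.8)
The plan is to prove both equivalences by the same two-step argument: one direction is immediate from the fact that the feasible region only shrinks, and the other direction is the substantive one, handled by taking an optimal solution of the smaller problem \myref{APCLB} (resp.\ \myref{APCLB'}) and \emph{repairing} its assignment variables so that it also satisfies \eqref{cut:assignedToLaterSet} and \eqref{eq:optcuts} without changing the $y$-variables or the value of $z$. Since \myref{APCLB^\circ} is \myref{APCLB} with extra constraints, $\mathcal{L}^{\circ}_\alpha(LB) \geq \mathcal{L}_\alpha(LB) \geq LB$ always holds; hence $\mathcal{L}^{\circ}_\alpha(LB) = LB$ immediately implies $\mathcal{L}_\alpha(LB) = LB$, and similarly for part (b). So the whole content is in the reverse implications.

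For the reverse implication of (a), I would start from an optimal solution $(x^*,y^*,z^*)$ of \myref{APCLB} with $z^* = LB$. The key observation is that Part~1 of the proof of Theorem~\ref{thm:convergenceDaskin} already shows that such a solution forces $x^*_{ij} = 0$ whenever $d_{ij} > LB$ and that $y^*$ is feasible for the set-cover system \eqref{eq:setcoverDaskin} with objective value $p$. Then I would invoke Part~2 of that same proof: given the set-cover-feasible $y^*$, Part~2 constructs $x$-variables $\tilde x$ (built from the componentwise-minimum vectors $\yNewOpti$ via the ordering $\sigma_i$) so that $(\tilde x, y^*, LB)$ is feasible for \myref{APCLB}. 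The point is that this constructed solution assigns each $i$ only to facilities within distance $LB$, and in fact in a ``greedy along $\sigma_i$'' fashion — exactly the structure that \eqref{cut:assignedToLaterSet} demands. I would therefore argue that $(\tilde x, y^*, LB)$ also satisfies \eqref{cut:assignedToLaterSet}: for any $i$ and any $\pts_\alpha$ with all $y^*_{j}$-values relevant, the construction fills up the assignment from the $\sigma_i$-closest facilities first, so once $\alpha$ units of assignment are used up on facilities at least as close as the farthest one in $\pts_\alpha$, nothing is left for facilities strictly farther than $\pts_\alpha$ — which is precisely \eqref{cut:assignedToLaterSet}. For \eqref{eq:optcuts}, note that $\sum_{j: d_{ij}\le UB} y^*_j \ge \sum_{j: d_{ij}\le LB} y^*_j \ge \alpha(1-y^*_i)$ using $LB \le UB$ and \eqref{eq:afsc1aD}, so \eqref{eq:optcuts} holds for $y^*$ outright. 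Hence $(\tilde x, y^*, LB)$ is feasible for \myref{APCLB^\circ} with objective $LB$, giving $\mathcal{L}^{\circ}_\alpha(LB) \le LB$, and combined with $\mathcal{L}^{\circ}_\alpha(LB) \ge LB$ we get equality.

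For part (b), the argument is the same but simpler, mirroring the relationship between Theorem~\ref{thm:convergenceDaskinRelaxation} and Theorem~\ref{thm:convergenceDaskin}: here $\mathcal{L}'_\alpha(LB) = LB$ gives (via the simplified Part~2 with $\yNewOpti = y^*$) a feasible point of \myref{APCLB'} with the same greedy-along-$\sigma_i$ assignment structure, which again satisfies \eqref{cut:assignedToLaterSet} and \eqref{eq:optcuts} for the reasons above, so $\mathcal{L}^{\prime\circ}_\alpha(LB) = LB$. One should double-check that removing \eqref{cut:yixij} and weakening \eqref{pc1:dxlifted} to \eqref{pc1:dxliftedRelax} does not interfere — it does not, since adding the extra optimality-preserving inequalities only removes solutions, and we are exhibiting a specific surviving one.

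\textbf{Main obstacle.} The delicate step is verifying that the solution $(\tilde x, y^*, LB)$ manufactured in Part~2 of the proof of Theorem~\ref{thm:convergenceDaskin} genuinely satisfies \eqref{cut:assignedToLaterSet} for \emph{every} choice of $i$ and every $\pts_\alpha \subseteq \pts$ with $|\pts_\alpha| = \alpha$, not merely for the particular $\pts^i_\alpha$ used in the construction. This requires carefully unwinding the definition of the set $\pts \setminus \bigcup_{j' \in \pts_\alpha}(S_{ij'}\cup\{i,j'\})$ (the ``strictly farther than $\pts_\alpha$ along $\sigma_i$'' facilities) and showing that the greedy construction never places positive assignment weight there once $\alpha(1-y^*_i)$ units are accounted for among the closer facilities. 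This is essentially a monotonicity-of-prefix-sums argument along the order $\sigma_i$, but it is the one place where a genuine (if short) argument is needed rather than a citation; I would likely factor it out as a small lemma or remark to keep the main proof clean, and I expect the write-up to lean heavily on the phrase ``by construction'' from the earlier proof.
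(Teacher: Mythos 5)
Your overall route is the same as the paper's: both directions are funneled through the set-cover characterizations of Theorems~\ref{thm:convergenceDaskin} and~\ref{thm:convergenceDaskinRelaxation}, the easy implication follows because adding \eqref{cut:assignedToLaterSet} and \eqref{eq:optcuts} only shrinks the feasible region while $z\geq LB$ is retained, and the substantive implication reuses the Part~2 construction and checks the two extra families of inequalities on the constructed point. Your verification of \eqref{eq:optcuts} via \eqref{eq:afsc1aD} and $LB\leq UB$ is exactly what the paper does, and your remark that part (b) goes through with the simplified construction $\yNewOpti=y^*$ is also how the paper handles it.

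However, the step you defer as the ``main obstacle'' is precisely where your sketch, as written, would fail: checking \eqref{cut:assignedToLaterSet} is \emph{not} only a prefix-sum monotonicity argument along $\sigma_i$. Fix $i$ with $y^*_i>0$, let $j_\alpha$ be the element of $\pts_\alpha$ farthest from $i$ according to $\sigma_i$, and let $j_i$ be the cutoff index of the greedy construction. If $j_i$ precedes (or equals) $j_\alpha$, then indeed $x^*_{ij}=0$ for all $j$ beyond $j_\alpha$ and the inequality reduces to $\sum_{j\in\pts_\alpha}y^*_j\leq\alpha$; this is the only case your ``once $\alpha(1-y^*_i)$ units are used up, nothing is left farther out'' argument covers. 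But if $j_\alpha$ strictly precedes $j_i$, the construction \emph{does} place positive weight on facilities farther than $\pts_\alpha$, and the left-hand side of \eqref{cut:assignedToLaterSet} mixes $y$-values on $\pts_\alpha$ with $x$-values beyond it; knowing only that the total assignment mass equals $\alpha(1-y^*_i)$ gives the bound $\alpha(1-y^*_i)+\sum_{j\in\pts_\alpha}(y^*_j-x^*_{ij})$, which can exceed $\alpha$ unless $y^*_j-x^*_{ij}$ is controlled. The paper closes this case by exploiting the capping $x^*_{ij}=\yNewOpti_j=\min\{y^*_j,1-y^*_i\}$ for $j$ preceding $j_i$: writing $y^*_j=x^*_{ij}+\varepsilon_j$ yields $0\leq\varepsilon_j\leq y^*_i$, so the left-hand side is at most $\sum_{j\in\pts_\alpha}\varepsilon_j+\sum_{j\in\pts\setminus\{i\}}x^*_{ij}\leq\alpha y^*_i+\alpha(1-y^*_i)=\alpha$. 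This bound $\varepsilon_j\leq y^*_i$ — which ultimately traces back to the presence of \eqref{cut:yixij} in \myref{APCLB} — is the missing idea in your proposal; without it the second case does not close. (In part (b), where $\yNewOpti=y^*$ and hence $\varepsilon_j=0$, your simpler argument does suffice.)
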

\begin{proof}
	To prove (a), it is enough to show that 
	$\mathcal{L}^{\circ}_\alpha(LB) = LB$ if and only if there is a feasible 
	solution for~\eqref{eq:setcoverDaskin} with objective function value at 
	most $p$, because of 
	Theorem~\ref{thm:convergenceDaskin}. 
	To do so, we can follow the proof of 	
	Theorem~\ref{thm:convergenceDaskin}. In particular, Part 1 can be used 
	without modifications. Also steps one, two and three of Part~2 can be used 
	without changes. Only in step four we have \blue{to} additionally show that 
	$(x^*,y^*,z^*)$ fulfills~\eqref{cut:assignedToLaterSet} 
	and~\eqref{eq:optcuts}.
	Clearly $(x^*,y^*,z^*)$ satisfies~\eqref{eq:optcuts} due 
	to~\eqref{eq:afsc1aD} and the fact that $LB \leq UB$. 
	 
	To show that also~\eqref{cut:assignedToLaterSet} is fulfilled we fix some 
	$i \in \pts$ and some $\pts_\alpha 
	\subseteq \pts$ with $|\pts_\alpha| = \alpha$. Let $j_{\alpha} \in 
	\pts_\alpha$ be the 
	maximum 
	entry of $\pts_\alpha$ according to $\sigma_i$, i.e., such that 
	$\pts_\alpha \subseteq (S_{ij_\alpha} \cup \{j_\alpha\})$. 
	Then~\eqref{cut:assignedToLaterSet} can be reformulated to
	\begin{alignat}{3}\label{proof:reformulationAssignedToLater}
	\sum_{j \in \pts_\alpha} y_{j} + \sum_{j \in \pts \setminus  
		(S_{ij_\alpha} \cup \{i,j_\alpha\} ) } x_{ij} 
	& \leq \alpha,
	\end{alignat}
	so it is enough to show that~\eqref{proof:reformulationAssignedToLater} 
	holds 
	for $(x^*,y^*,z^*)$. 	
	
	If $j_i \in (S_{ij_\alpha} \cup \{j_\alpha\})$, i.e., if $j_i$ is before 
	$j_\alpha$ according to the order $\sigma_i$ and thus $j_i$ is closer or at 
	the same distance to $i$
	than $j_\alpha$, then we can deduce that $x^*_{ij} = 0$ for all $j \in \pts 
	\setminus  
	(S_{ij_\alpha} \cup \{i,j_\alpha\})$ by construction, because all of these 
	$j$ are further away from $i$ than $j_i$ is. Thus, this implies that 
	\eqref{proof:reformulationAssignedToLater} is fulfilled in this case, 
	as $|\pts_\alpha| = 
	\alpha$ and $y^*_j \leq 1$ for all $j \in \pts$.
	
	If $j_i \not\in (S_{ij_\alpha} \cup \{j_\alpha\})$, i.e., if $j_i$ is 
	further away to $i$ than $j_\alpha$ is, 
	then $x^*_{ij} = \yNewOpti_j = 
	\min\{y^*_j,1-y^*_i\}$ holds for all $j \in \pts_\alpha$ by construction.
	Thus, we can define $\varepsilon_j$ such that $y^*_j = x^*_{ij} + 
	\varepsilon_j$ 
	for each $j \in \pts_\alpha$, because either $y^*_j = x^*_{ij}$ and  
	$\varepsilon_j = 0$, or $y^*_j > x^*_{ij} = 1-y^*_i$ and  
	$\varepsilon_j = y^*_j - (1- y^*_i)$. 
	In any case, $0 \leq \varepsilon_j$ and $\varepsilon_j \leq y^*_i$, as 
	$y^*_j 
	\leq 1$. 
	This, together with the already shown~\eqref{pc1:sumx}, implies that
	\begin{alignat*}{3}
	\sum_{j \in \pts_\alpha} y^*_{j} 
	+ \sum_{j \in \pts \setminus  (S_{ij_\alpha} \cup \{i,j_\alpha\} ) } 
	x^*_{ij} 
	&= 
	\sum_{j \in \pts_\alpha} (x^*_{ij} + \varepsilon_j) 
	+ \sum_{j \in \pts \setminus (S_{ij_\alpha} \cup \{i,j_\alpha\} ) } x^*_{ij}
	\\
	&\leq 
	\sum_{j \in \pts_\alpha} \varepsilon_j
	+
	\sum_{j \in \pts \setminus \{i\} } x^*_{ij}
	\leq
	\alpha y^*_i
	+ \alpha(1-y^*_i)
	= \alpha,
	\end{alignat*}	
	so~\eqref{proof:reformulationAssignedToLater} holds also in this case.
	Thus $(x^*,y^*,z^*)$ fulfills~\eqref{cut:assignedToLaterSet}, which 
	finishes the proof of (a).
	
	The proof of (b) can be done analogously with the help of 
	Theorem~\ref{thm:convergenceDaskinRelaxation} and is therefore skipped.
\end{proof}

Theorem~\ref{thm:convergenceWithOptimalityInequ} shows that 
adding the optimality-preserving inequalities~\eqref{cut:assignedToLaterSet} 
and~\eqref{eq:optcuts} to the iterative 
lifting does not improve 
the best lower bounds obtained $LB_\alpha^{\#}$ 
and 
${LB_\alpha^{\#}}'$.

\subsection{Best lower bound based on (APC2)}
\label{sec:comparison:e}

Next, we analyze \myref{APC2} for the \APCP in a similar
way \cite{elloumi2004} and \cite{ales2018} have done with \myref{PCE} and 
\myref{PCA} for the  \PCP. To do so, we introduce a 
\emph{semi-relaxation} 
$\mytag{APC2-Ry}$ of \myref{APC2} which is defined as \myref{APC2} with relaxed 
$y$-variables, i.e., \myref{APC2-Ry} is
\myref{APC2} without~\eqref{pcE:ybin} and with the constraints $0 \leq y_j \leq 
1$ 
for all $j \in \pts$ instead.
In the same fashion, let $\mytag{PCE-Ry}$ be 
the formulation 
\myref{PCE} without the constraints $y_j \in \{0,1\}$ and with the constraints 
$0 \leq y_j \leq 
1$ 
for all $j \in \pts$. In case of the  \PCP, the semi-relaxation 
\myref{PCE-Ry} of 
\cite{elloumi2004} has several interesting properties, which we now investigate 
in analogous form for the \APCP. 

\subsubsection{Computation in polynomial time}

First, for the  \PCP the optimal objective function value of the 
semi-relaxation 
\myref{PCE-Ry} can be 
computed in polynomial time as shown by \cite{elloumi2004}. 
Our next aim is to present a procedure for the \APCP to compute also the 
optimal value of 
the 
semi-relaxation \myref{APC2-Ry} in polynomial time. To do so, we first need the 
following result.

\begin{lemma}\label{thm:firstFractionalFixed}
	Let $k'$ be such that 
	\begin{alignat}{3}
	u_k & = 1 \qquad && \forall k \in \{2, \dots, K\}, k \leq k'
	\label{eq:condition}
	\end{alignat}
	are valid equalities for both \myref{APC2} and \myref{APC2-Ry}.
	Let $(y^*,u^*)$ be an optimal solution of \myref{APC2-R} 
	with~\eqref{eq:condition}.
	If $u^*$ is binary, let $k^*$ be the largest $k$ such that $u^*_k = 1$. 
	If $u^*$ is not binary, let 
	$k^*$ be the 
	smallest $k$ such that $u^*_k < 1$, i.e.,  
	$u^*_{k^*}$ is the first fractional 
	entry of $u^*$. Then the constraints 
	\begin{alignat}{3}
	u_k & = 1 \qquad && \forall k \in \{2, \dots, K\}, k \leq k^*
	\label{eq:lifting_uk}
	\end{alignat}
	are valid equalities for both \myref{APC2} and \myref{APC2-Ry}, i.e., when 
	adding~\eqref{eq:lifting_uk}  
	to \myref{APC2} and \myref{APC2-Ry}, the respective sets of feasible 
	solutions do not change.
\end{lemma}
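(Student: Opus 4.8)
The plan is to extract from the objective value of $(y^*,u^*)$ the bound that the optimal objective function values of both \myref{APC2} and \myref{APC2-Ry} are at least $d_{k^*}$, and then to finish by invoking Theorem~\ref{thm:feascutsElloumi} with $LB=d_{k^*}$.

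First I would set up the comparison of optimal values. Write $\mathcal L = d_1 + \sum_{k=2}^{K}(d_k-d_{k-1})u^*_k$ for the objective value of $(y^*,u^*)$, i.e., the optimal value of \myref{APC2-R} with~\eqref{eq:condition}. Since~\eqref{eq:condition} is by hypothesis a valid equality for both \myref{APC2} and \myref{APC2-Ry}, adding it leaves both feasible sets unchanged, and \myref{APC2-R} with~\eqref{eq:condition} is a relaxation of each of those formulations with~\eqref{eq:condition} added; hence $\mathcal L$ is a lower bound on the optimal objective function value of \myref{APC2} and on that of \myref{APC2-Ry}. I would also record that both of these optimal values lie in $D$: for \myref{APC2} this is observed in Section~\ref{sec:formulationE:formulation}, and for \myref{APC2-Ry} it holds because the $u$-variables there are binary, so~\eqref{pc3:ulink} forces $u$ to have the shape $(1,\dots,1,0,\dots,0)$ and the objective~\eqref{pc3:ojb} then telescopes to $d_m$, where $m$ is the largest index with $u_m=1$ (and to $d_1$ if there is none).

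Next I would show that both optimal values are at least $d_{k^*}$, treating the two cases of the statement. If $u^*$ is binary, then~\eqref{pc3:ulink} forces $u^*=(1,\dots,1,0,\dots,0)$ with $u^*_k=1$ exactly for $k\le k^*$, so $\mathcal L$ telescopes to $\mathcal L=d_{k^*}$, and hence both optimal values are $\ge d_{k^*}$. If $u^*$ is not binary, then $u^*_k=1$ for $k<k^*$ and $0<u^*_{k^*}<1$, so
\[
\mathcal L = d_{k^*-1} + (d_{k^*}-d_{k^*-1})\,u^*_{k^*} + \sum_{k>k^*}(d_k-d_{k-1})\,u^*_k > d_{k^*-1},
\]
using $u^*_{k^*}>0$ and $d_{k^*}>d_{k^*-1}$. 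Since both optimal values are $\ge\mathcal L>d_{k^*-1}$ and lie in $D$, and $D$ contains no element strictly between $d_{k^*-1}$ and $d_{k^*}$, both are again $\ge d_{k^*}$.

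It then suffices to apply Theorem~\ref{thm:feascutsElloumi} with $LB=d_{k^*}$: since $d_{k^*}$ is a lower bound on the optimal value of \myref{APC2}, it yields that $u_k=1$ for all $k$ with $d_k\le d_{k^*}$ is a valid equality for \myref{APC2}, which — as the $d_k$ are strictly increasing — is precisely~\eqref{eq:lifting_uk}. For \myref{APC2-Ry} the same conclusion follows by the same reasoning, since the argument of Theorem~\ref{thm:feascutsElloumi} relies only on the $u$-variables being binary, the chain constraints~\eqref{pc3:ulink}, and $d_{k^*}$ being a lower bound on the optimal value, all of which hold for \myref{APC2-Ry}. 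The only delicate point is the non-binary case: there $\mathcal L$ itself need not reach $d_{k^*}$, so one has to combine the strict inequality $\mathcal L>d_{k^*-1}$ with the fact that the \emph{true} optimum lies in $D$ to round up to $d_{k^*}$; the telescoping computations themselves are routine.
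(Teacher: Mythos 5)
Your proposal is correct and follows essentially the same route as the paper's proof: both argue that \myref{APC2-R} with~\eqref{eq:condition} is a relaxation of \myref{APC2} and \myref{APC2-Ry}, handle the binary case by telescoping to $d_{k^*}$, and in the fractional case combine the strict bound $> d_{k^*-1}$ with the fact that both optimal values lie in $D$ to round up to $d_{k^*}$. The only (harmless) difference is that you finish by explicitly invoking Theorem~\ref{thm:feascutsElloumi}, whereas the paper draws the same conclusion directly.
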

\begin{proof}
%
	If $u^*$ is binary, $k^*$ is chosen in such a way that the optimal 
	objective function value of \myref{APC2-R} 
	with~\eqref{eq:condition} is $d_{k^*}$.
	If $u^*$ is not binary, then $k^*$ is chosen in such a way that the optimal 
	objective function value of \myref{APC2-R} 
	with~\eqref{eq:condition} is larger than $d_{k^*-1}$. 
	Thus, in any case, the optimal 
	objective function value of \myref{APC2-R} 
	with~\eqref{eq:condition} is larger than $d_{k^*-1}$.
	
	\orange{
	Assume that~\eqref{eq:lifting_uk} is not a valid equality for 
	\myref{APC2}. Then there is a feasible solution $(u^\circ,y^\circ)$ of 
	\myref{APC2} and there is a $k^\circ \leq k^*$ such that 
	$u^\circ_{k^\circ} = 0$. Then $u^\circ_k = 0$ for all $k \geq k^\circ$ 
	because of~\eqref{pc3:ulink} and $u^\circ_k \leq 1$ for all  $k < k^\circ$. 
	Thus, the objective function value of $(u^\circ,y^\circ)$ for 
	\myref{APC2-R} 
	with~\eqref{eq:condition}, which is equal to $d_1 + \sum_{k=2}^{K} (d_k - 
	d_{k-1})u^\circ_k$, is at most $d_{k^\circ-1}$ and therefore it is at most 
	$d_{k^*-1}$. 
	Furthermore, $(u^\circ,y^\circ)$ is feasible for \myref{APC2-R} 
	with~\eqref{eq:condition}, because \myref{APC2-R} with~\eqref{eq:condition} 
	is a relaxation of \myref{APC2}.
	Thus, the optimal objective function value of \myref{APC2-R} 
	with~\eqref{eq:condition} is at most $d_{k^*-1}$, a contradiction.
	Therefore, the assumption was wrong and~\eqref{eq:lifting_uk} is a valid 
	equality for \myref{APC2}.
	
	The fact that~\eqref{eq:lifting_uk} is a valid equality for 
	\myref{APC2-Ry} can be shown analogously.}	
\end{proof}


As a consequence, by applying 
Lemma~\ref{thm:firstFractionalFixed} in an iterative fashion, we 
can compute \myref{APC2-Ry} in polynomial time, as the next results shows.

\begin{theorem}\label{thm:semiRelaxPolyTime}
	An optimal solution of the semi-relaxation \myref{APC2-Ry} can be computed 
	in polynomial time.
\end{theorem}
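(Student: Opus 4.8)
The plan is to use Lemma~\ref{thm:firstFractionalFixed} as the engine of an iterative procedure that, starting from no fixed $u$-variables (i.e.\ $k' = 1$), repeatedly solves \myref{APC2-R} with the currently known equalities $u_k = 1$ for $k \le k'$, then reads off $k^*$ as in the lemma and updates the set of fixed variables to include all $u_k = 1$ for $k \le k^*$. The lemma guarantees that this is valid for both \myref{APC2} and \myref{APC2-Ry}, so the lower bound never overshoots the true semi-relaxation optimum. The key observation driving termination is that $k^*$ is \emph{strictly} larger than the previous $k'$: in the binary case $u^*_{k'+1}, \dots$ are all at their bound and $k^* \ge k'$ with the optimal value pinned down exactly, and in the fractional case the first fractional entry $u^*_{k^*}$ satisfies $k^* \ge k'+1$ because $u_k = 1$ is already enforced for $k \le k'$. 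So each iteration advances $k'$ by at least one, and since $k' \le K \le (|\pts|-1)|\pts|$, the procedure terminates after polynomially many iterations.

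\textbf{Correctness at termination.} When the procedure stops, we are in the case where $u^*$ (the optimal solution of \myref{APC2-R} with the current equalities) is binary — otherwise $k^*$ would strictly increase and we would continue. I would argue that once $u^*$ is binary and we have fixed $u_k = 1$ exactly for $k \le k^*$, the pair $(y^*, u^*)$ is in fact feasible and optimal for \myref{APC2-Ry} itself. Feasibility: the constraints \eqref{pc3:sumy}, \eqref{pc3:ulink}, \eqref{pc3:sumyu} of \myref{APC2-Ry} are exactly those of \myref{APC2-R} together with $y_j \le 1$, and the $y$-variables are already allowed to be fractional in \myref{APC2-Ry}; since $u^*$ is binary it satisfies \eqref{pcE:ubin}. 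Optimality: \myref{APC2-R} with valid equalities is a relaxation of \myref{APC2-Ry} (dropping $u \in \{0,1\}$ can only lower the optimum), so its optimal value is a lower bound on that of \myref{APC2-Ry}; but we have exhibited a feasible solution of \myref{APC2-Ry} attaining that value, so the two optima coincide and $(y^*, u^*)$ is optimal for \myref{APC2-Ry}.

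\textbf{Polynomial-time bound.} Each iteration solves one linear program \myref{APC2-R} augmented with a set of equality constraints — this has $O(|\pts|^2)$ variables and $O(|\pts|^2)$ constraints and is solvable in polynomial time. Identifying $k^*$ from $u^*$ is a linear scan. The number of iterations is at most $K \le (|\pts|-1)|\pts|$ since $k'$ strictly increases each time. Hence the total running time is polynomial, which establishes the theorem.

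\textbf{The main obstacle} I anticipate is the argument that the procedure actually \emph{reaches} a state where $u^*$ is binary, rather than grinding through all $K$ iterations while always seeing fractional solutions — but this is handled for free by the strict-increase observation: after at most $K$ steps we must either have hit a binary $u^*$ or have forced $u_k = 1$ for all $k$, and the latter is a (trivially binary) feasible point whose objective value $d_K$ is then necessarily optimal for the semi-relaxation. A secondary subtlety is making sure Lemma~\ref{thm:firstFractionalFixed}'s hypothesis — that the current equalities are valid for \emph{both} \myref{APC2} and \myref{APC2-Ry} — is maintained as an invariant; this holds at the start (no equalities, or $u_2 = 1$ when $d_2 \le d_1$ trivially never happens so really the empty set) and is preserved by each application of the lemma, which is precisely what the lemma asserts.
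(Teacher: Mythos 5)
Your proposal is correct and follows essentially the same route as the paper: iteratively solve \myref{APC2-R} with the current equalities, invoke Lemma~\ref{thm:firstFractionalFixed} to advance the fixed prefix of $u$-variables, and stop when $u^*$ is binary, with the same strict-increase termination argument and the same count of $O(|\pts|^2)$ iterations of polynomial-size linear programs. The extra justification you give for optimality at termination (relaxation lower bound met by a feasible point of \myref{APC2-Ry}) is exactly the reasoning the paper leaves implicit.
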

\begin{proof}
	We can compute an optimal solution of \myref{APC2-Ry} as follows. First, we 
	set $k'  = 1$. Then we solve \myref{APC2-R} 
	with~\eqref{eq:condition}, which is equivalent to \myref{APC2-R} in the 
	case that $k' 
	= 1$ holds. Let $(y^*,u^*)$ be the obtained optimal
	solution. If $u^*$ is binary, it is an optimal 
	solution of \myref{APC2-Ry}. Otherwise, we can apply  
	Lemma~\ref{thm:firstFractionalFixed} to obtain $k^*$, update $k' = k^*$ and 
	solve 
	\myref{APC2-R} with~\eqref{eq:condition} again. We 
	repeat this, until we obtain a binary $u^*$. 
	
	Note that $k'$ increases at least by one in each iteration, and there are 
	$O(|\pts|^2)$ many potential values of $k'$. Furthermore, in each 
	iteration a linear program with a polynomial number of variables and 
	constraints 
	has to be solved. Thus, this procedure computes an optimal solution of 
	\myref{APC2-Ry} in polynomial time.	
\end{proof}

\subsubsection{Combinatorial interpretation}

A second interesting property of the semi-relaxation \myref{PCE-Ry} for the 
 \PCP is that  
\cite{gaar2022scaleable} proved that it is connected to the optimal solution of 
a 
set cover problem. In particular, the optimal objective function value of 
\myref{PCE-Ry} 
is equal to $d^* \in D$ if and only if there is a fractional set cover solution 
with radius $d^*$ that uses at most $p$ sets. It turns out that the following  
analogous result is also true for  \myref{APC2} for the \APCP.

\begin{theorem}
	\label{thm:convergence}
	Let $d^* \in D$. 
	Then the optimal objective function value of 
	\myref{APC2-Ry} is equal to $d^*$ 	
	if and only if 
	$d^*$ is the smallest possible value of $\delta$ such that
	there 
	is a feasible solution for $\myref{AFSC}_{\delta}$ 
	with objective function value at most $p$.
\end{theorem}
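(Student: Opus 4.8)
The plan is to reduce both directions to a single auxiliary equivalence relating the $y$-part of \myref{APC2-Ry} (for a fixed binary $u$) to feasibility of $\myref{AFSC}_\delta$, and then to combine it with the trivial structure of the objective of \myref{APC2-Ry} and a monotonicity argument in $\delta$.

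\textbf{Structure of the semi-relaxation.} Since $u$ is binary in \myref{APC2-Ry} and \eqref{pc3:ulink} forces $u_2 \geq u_3 \geq \dots \geq u_K$, every feasible $u$ has the form $u_2 = \dots = u_{k^*} = 1$ and $u_{k^*+1} = \dots = u_K = 0$ for some $k^* \in \{1,\dots,K\}$ (with $k^* = 1$ meaning all $u_k = 0$), and then the objective \eqref{pc3:ojb} telescopes to $d_{k^*} \in D$. Hence the optimal value of \myref{APC2-Ry} lies in $D$, and it equals $d^*$ if and only if (i) \myref{APC2-Ry} has a feasible solution of objective value $\leq d^*$, and (ii) it has no feasible solution of objective value $< d^*$.

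\textbf{The auxiliary equivalence.} Fix $\delta = d_{k^*} \in D$ and the corresponding $u$. I claim that there is a $y$ with $0 \leq y_j \leq 1$ for all $j \in \pts$ and $\sum_{j \in \pts} y_j = p$ such that $(y,u)$ is feasible for \myref{APC2-Ry} if and only if $\myref{AFSC}_\delta$ has a feasible solution of objective value at most $p$. For the constraints \eqref{pc3:sumyu} with $d_k \leq \delta$ we have $u_k = 1$, so they hold trivially; for $d_k \in D_i$ with $d_k > \delta$ we have $u_k = 0$, and among these the strongest is the one for the smallest $d_k \in D_i$ exceeding $\delta$. As $\delta \geq d_1$, no distance $d_{ij}$ with $j \neq i$ lies strictly between $\delta$ and that smallest $d_k$, so $\{j \in \pts \setminus \{i\} : d_{ij} < d_k\} = \{j \in \pts \setminus \{i\} : d_{ij} \leq \delta\}$ and \eqref{pc3:sumyu} becomes exactly \eqref{eq:afsc1a} for $\delta$; and if $D_i$ has no value exceeding $\delta$, that left-hand side equals $p - y_i$, so \eqref{eq:afsc1a} holds automatically because $\alpha \leq p$. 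Thus $(y,u)$ feasible with $\sum_j y_j = p$ is the same as $y$ being feasible for $\myref{AFSC}_\delta$ with objective value exactly $p$. To pass between objective $=p$ and objective $\leq p$, I would reuse the padding trick from the proof of Theorem~\ref{thm:convergenceDaskin}: from a feasible $\yNewOpt$ of $\myref{AFSC}_\delta$ with $\sum_j \yNewOpt_j = p^{\circ} \leq p$, set $y^*_j = \yNewOpt_j + (1-\yNewOpt_j)\frac{p-p^{\circ}}{|\pts|-p^{\circ}}$; then $\sum_j y^*_j = p$ and $y^* \geq \yNewOpt$ componentwise, and since the left-hand side of \eqref{eq:afsc1a} is nondecreasing and its right-hand side nonincreasing in $y$, $y^*$ stays feasible for $\myref{AFSC}_\delta$.

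\textbf{Monotonicity and assembly.} Note that $\myref{AFSC}_\delta$ depends on $\delta$ only through the sets $\{j : d_{ij} \leq \delta\}$, which are nondecreasing in $\delta$; hence if $\myref{AFSC}_\delta$ is feasible with value $\leq p$, so is $\myref{AFSC}_{\delta'}$ for every $\delta' \geq \delta$. Combining this with the previous two steps, \myref{APC2-Ry} has a feasible solution of objective $\leq d^*$ if and only if $\myref{AFSC}_{\delta'}$ is feasible with value $\leq p$ for some $\delta' \in D$ with $\delta' \leq d^*$, which by monotonicity holds if and only if $\myref{AFSC}_{d^*}$ is feasible with value $\leq p$; and \myref{APC2-Ry} has no feasible solution of objective $< d^*$ if and only if $\myref{AFSC}_\delta$ is infeasible with value $\leq p$ for all $\delta < d^*$. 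By (i)--(ii), the optimal value of \myref{APC2-Ry} equals $d^*$ exactly when $\myref{AFSC}_{d^*}$ is feasible with value $\leq p$ and $\myref{AFSC}_\delta$ is not for any $\delta < d^*$, i.e.\ exactly when $d^*$ is the smallest value of $\delta$ for which $\myref{AFSC}_\delta$ has a feasible solution of objective value at most $p$. The main obstacle is the auxiliary equivalence, in particular the identification of the binding constraint \eqref{pc3:sumyu} and the index-set identity $\{j: d_{ij}<d_k\} = \{j: d_{ij}\leq\delta\}$, together with the degenerate case where $i$ has no relevant distance exceeding $\delta$ (so \eqref{pc3:sumyu} is vacuous but \eqref{eq:afsc1a} must still be checked) and the bookkeeping needed to move between $\sum_j y_j = p$ in \myref{APC2-Ry} and $\sum_j y_j \leq p$ in $\myref{AFSC}_\delta$; the monotonicity of $\myref{AFSC}_\delta$ in $\delta$ is routine but is exactly what makes "the smallest possible value of $\delta$'' well posed.
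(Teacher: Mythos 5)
Your proposal is correct and follows essentially the same route as the paper's proof: reduce to a pointwise equivalence (for each $\delta \in D$) between feasibility of \myref{APC2-Ry} at objective value $\delta$ and feasibility of $\myref{AFSC}_{\delta}$ with value at most $p$, using the telescoping structure of the $u$-variables, the identification of the binding constraint~\eqref{pc3:sumyu} via the smallest element of $D_i$ exceeding $\delta$, the degenerate case where $D_i$ has no such element, and the padding construction from Theorem~\ref{thm:convergenceDaskin}. The only cosmetic difference is that you assemble the two directions via an explicit monotonicity argument in $\delta$, whereas the paper equates the two minima over $D$ directly.
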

\begin{proof}
	 As \myref{APC2-Ry} requires the 
	$u$-variables to be binary and \eqref{pc3:ulink} has to hold, it is clear 
	that the optimal objective function value of \myref{APC2-Ry} is a value 
	from $D$. Furthermore, it is clear that the smallest possible value of 
	$\delta$ such that
	there 
	is a feasible solution for $\myref{AFSC}_{\delta}$ 
	with objective function value at most $p$ is a value 
	from $D$, because only for such values the problem
	$\myref{AFSC}_{\delta}$ changes. Thus, in order to prove the result it is 
	enough to show that for any $\delta \in D$
	there is a feasible solution for \myref{APC2-Ry} with objective function 
	value $\delta$ if and only if there 
	is a feasible solution for $\myref{AFSC}_{\delta}$ 
	with objective function value at most $p$.
	We will finish the proof by showing each side of this equivalence in a 
	separate part.
	
	\textbf{Part 1:}
	Let $(u^*,y^*)$ be a feasible solution of \myref{APC2-Ry} with objective 
	function value $\delta \in D$. We will finish this part of the proof by 
	showing 
	that $y^*$ is a feasible solution for $\myref{AFSC}_{\delta}$ 
	with objective function value at most $p$.
	
	Towards this end, let $\ell$ be such that $\delta = d_\ell$. 
	Then~\eqref{pc3:ulink} together with~\eqref{pcE:ubin} imply that $u^*_k = 
	1$ for all $k \in \{1, \dots, \ell\}$ and $u^*_k = 0$ for all $k \in 
	\{\ell+1, \dots, K\}$. 
	Now we fix some $i \in \pts$ and distinguish two cases.
	
	If there is an element in $D_i$ that is larger than $d_\ell$, then let 
	$d_{\ell'} = \min_{k \in \{1, \dots, K\}}\{d_k \in D_i: d_k > d_\ell\}$, 
	i.e., $d_{\ell'}$ is the smallest entry of $D_i$ that is larger than 
	$d_\ell$. By construction $u^*_{\ell'} = 0$ holds, so~\eqref{pc3:sumyu} for 
	$d_k = d_{\ell'}$ implies that
	\begin{alignat*}{3}
	\alpha(1- 
	y^*_i) 
	\leq   \sum_{j \in \pts \setminus \{i\} :  d_{ij} < d_{\ell'}} y^*_{j}  
	= \sum_{j \in \pts \setminus \{i\} :  d_{ij} \leq d_{\ell}} y^*_{j}
	= \sum_{j \in \pts \setminus \{i\} :  d_{ij} \leq \delta} y^*_{j},
\end{alignat*}
	so in this case~\eqref{eq:afsc1a} is satisfied for $i$.
	
	If there is no element in $D_i$ that is larger than $d_\ell$, then $d_{ij} 
	\leq d_{\ell}$ for all $j \in \pts \setminus \{i\}$ and 
	with~\eqref{pc3:sumy} \elli{and $1 \leq \alpha \leq p$} this
	implies that 
	\begin{alignat*}{3}
\sum_{j \in \pts \setminus \{i\} :  d_{ij} \leq \delta} y^*_{j}
= \sum_{j \in \pts \setminus \{i\} :  d_{ij} \leq d_{\ell}} y^*_{j}
= \sum_{j \in \pts \setminus \{i\}} y^*_{j}
= p - y^*_i  
\geq \alpha(1- 
y^*_i),
\end{alignat*}
so also in this case~\eqref{eq:afsc1a} is satisfied for $i$.

As a result, the inequality~\eqref{eq:afsc1a} is satisfied by $y^*$ in any 
case. 
Furthermore, $y^*$ fulfills~\eqref{eq:afsc2} because it satisfies the 
relaxation 
of~\eqref{pcE:ybin}. The objective function value~\eqref{eq:afsco} of $y^*$ is 
equal to $p$ 
because of~\eqref{pc3:sumy}. Thus, $y^*$ is feasible for 
$\myref{AFSC}_{\delta}$ with objective function value at most $p$. 
	
	\textbf{Part 2:}
	Assume $\delta \in D$ is such that there is a feasible solution $y\newOpt$  
	for $\myref{AFSC}_{\delta}$ 
	with objective function value at most $p$. 
	We will finish this part of the proof by constructing a
	feasible solution $(u^*,y^*)$ for 
	\myref{APC2-Ry} with objective function 
	value $\delta$.
	
	Towards this end, let $u^*_1 = 1$ if $d_k 
	\leq \delta$ and let $u^*_k = 0$ otherwise. 
	Furthermore, we construct $y^*$ from $y\newOpt$ in the same 
	fashion as in Part 2 of the proof of 
	Theorem~\ref{thm:convergenceDaskin}. In particular, let $\pNewOpt$ be the 
	objective function value~\eqref{eq:afsco} of $\yNewOpt$, so $\pNewOpt = 
	\sum_{j \in \pts} \yNewOpt_j$ and construct $y^*$ as $y^*_j = 
	\yNewOpt_j + 
	(1-\yNewOpt_j)\frac{p-\pNewOpt 
	}{|\pts| - \pNewOpt}$ for all $j \in \pts$. With the same arguments as in 
	the proof of Theorem~\ref{thm:convergenceDaskin} it follows that $y^*$ is 
	feasible for $\myref{AFSC}_{\delta}$ and has objective function value $p$.

	By construction, $u^*$ fulfills~\eqref{pc3:ulink} 
	and~\eqref{pcE:ubin}. Furthermore $y^*$ fulfills the relaxation 
	of \eqref{pcE:ybin} because of~\eqref{eq:afsc2}, and it 
	satisfies~\eqref{pc3:sumy} because it has  objective function value $p$ for 
	$\myref{AFSC}_{\delta}$. 
	Next we consider the inequalities~\eqref{pc3:sumyu} for some $i \in \pts$. 
	This inequality is clearly satisfied for any $d_k$ such that $u^*_k = 1$. 
	If $d_k \in D_i$ is such that $u^*_k = 0$, then by construction $d_k > 
	\delta$. This together with \eqref{eq:afsc1a} implies that
	\begin{alignat*}{3}
	\alpha(1-y^*_i) 
	\leq  \sum_{j \in \pts \setminus \{i\}:  d_{ij}  \leq \delta}y^*_{j}
	\leq  \sum_{j \in \pts \setminus \{i\}:  d_{ij}  < d_k}y^*_{j}
	= \sum_{j \in \pts \setminus \{i\}:  d_{ij}  < d_k}y^*_{j} + \alpha u^*_k, 
\end{alignat*}
	so~\eqref{pc3:sumyu} holds in any case.
	
	As a consequence, $(u^*,y^*)$ is feasible for \myref{APC2-Ry}. By 
	construction, and because $\delta \in D$, it follows that the objective 
	function value of $(u^*,y^*)$ for \myref{APC2-Ry} is $\delta$, which closes 
	this part of the proof.
\end{proof}

\subsection{Comparison of the best lower bounds}
\label{sec:comparison:comparede}

Finally, we compare the best lower bounds obtainable 
with the two formulations. For the  \PCP, \cite{gaar2022scaleable} 
proved that 
iteratively using the lower bound information for \myref{PC1} yields a bound, 
which coincides with the bounds obtained by the semi-relaxation \myref{PCE-Ry}. 

It turns out that this may not the case anymore for the \APCP. 
Towards this end, let $LB^*_\alpha$ be the optimal objective function of 
\myref{APC2-Ry}. Then we can deduce the following result.
\begin{theorem}
	It holds that $LB_\alpha^{\#} \geq {LB_\alpha^{\#}}' = LB^*_\alpha$. 
\end{theorem}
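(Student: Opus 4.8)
The plan is to decompose the statement into the inequality $LB_\alpha^{\#} \geq {LB_\alpha^{\#}}'$ and the equality ${LB_\alpha^{\#}}' = LB^*_\alpha$. The inequality is nothing new: it is exactly Corollary~\ref{cor:boundComparisonDaskin}. So all of the actual work lies in establishing the equality, and for that I would only need to combine Theorem~\ref{thm:convergenceDaskinRelaxation} with Theorem~\ref{thm:convergence}, together with a short monotonicity observation about $\myref{AFSC}_{\delta}$.

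First I would record the monotonicity observation: if $\delta \leq \delta'$, then every feasible solution of $\myref{AFSC}_{\delta}$ is also feasible for $\myref{AFSC}_{\delta'}$, since passing from $\delta$ to $\delta'$ only adds further non-negative terms $y_j$ to the left-hand sides of~\eqref{eq:afsc1a} while leaving the objective and~\eqref{eq:afsc2} untouched. Consequently the set of $\delta \in \mathbb{R}$ for which $\myref{AFSC}_{\delta}$ admits a feasible solution of objective value at most $p$ is an up-set; since $\myref{AFSC}_{\delta}$ only changes as $\delta$ crosses a value of $D$ and $D$ is finite, this up-set is of the form $[\delta^*,\infty)$ for some $\delta^* \in D$ (nonemptiness, e.g.\ for $\delta$ larger than all distances, is immediate from $\alpha \le p$).

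Next I would invoke Theorem~\ref{thm:convergenceDaskinRelaxation}, which gives $\mathcal{L}'_\alpha(LB) = LB$ if and only if $\myref{AFSC}_{LB}$ has a feasible solution of objective value at most $p$. By the monotonicity just observed, $\{LB \in \mathbb{R} : \mathcal{L}'_\alpha(LB) = LB\} = [\delta^*,\infty)$, so by definition ${LB_\alpha^{\#}}' = \delta^*$. Finally, Theorem~\ref{thm:convergence} identifies the optimal objective function value $LB^*_\alpha$ of \myref{APC2-Ry} as precisely the smallest $\delta$ for which $\myref{AFSC}_{\delta}$ has a feasible solution of objective value at most $p$, i.e.\ $LB^*_\alpha = \delta^*$. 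Chaining the two identities yields ${LB_\alpha^{\#}}' = \delta^* = LB^*_\alpha$, and combining this with Corollary~\ref{cor:boundComparisonDaskin} gives $LB_\alpha^{\#} \geq {LB_\alpha^{\#}}' = LB^*_\alpha$, as claimed.

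The only slightly delicate point — and hence the main (mild) obstacle — is making the up-set argument precise enough to conclude that $\{LB : \mathcal{L}'_\alpha(LB) = LB\}$ is genuinely a closed half-line whose minimum is attained and lies in $D$, rather than an open interval or something whose infimum is not attained; this rests on $\myref{AFSC}_{\delta}$ being constant between consecutive elements of $D$ together with finiteness of $D$, which is the same reasoning already used in the paper to argue $LB_\alpha^{\#}, {LB_\alpha^{\#}}' \in D$. Everything else is a direct appeal to the three cited results.
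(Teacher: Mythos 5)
Your proposal is correct and follows exactly the same route as the paper, whose entire proof is the one-line citation of Corollary~\ref{cor:boundComparisonDaskin} and Theorems~\ref{thm:convergenceDaskinRelaxation} and~\ref{thm:convergence}; you have merely made explicit the monotonicity of $\myref{AFSC}_{\delta}$ in $\delta$ and the resulting identification of both ${LB_\alpha^{\#}}'$ and $LB^*_\alpha$ with the same threshold $\delta^*\in D$, which the paper leaves implicit.
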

\begin{proof}
	This is a consequence of Corollary~\ref{cor:boundComparisonDaskin} and 
	Theorem~\ref{thm:convergenceDaskinRelaxation} and~\ref{thm:convergence}. 
\end{proof}

As a consequence, for the \APCP, when all our valid inequalities are included, 
the model \myref{APC1} 
produces as least as 
good bounds as the semi-relaxation of \myref{APC2}, and might produce better 
bounds.

\section{Implementation details \label{sec:implementation}}

Since both formulations are of polynomial size, they could be directly given to 
an integer programming solver for moderately-sized instances. However, we have 
implemented \BC approaches based on them which incorporate our valid 
inequalities and the lifted versions of it, our optimality-preserving 
inequalities, a starting heuristic and a primal heuristic, and variable 
fixing 
procedures. Moreover, for both formulations, we do not start the solution 
process with all the inequalities of the formulation, but add some of 
them 
on-the-fly when needed using separation procedures. 

We first describe the 
starting heuristic and the primal heuristic, which are used by both \BC 
algorithms in Section~\ref{sec:heu}. Then, we give a description 
of our \BC based on \myref{APC1} 
in Section \ref{sec:bc1}, and a description of our \BC based on 
\myref{APC2} in Section \ref{sec:bc2}. We evaluate the effects of the 
different 
ingredients of the \BC algorithms on the performance in Section 
\ref{sec:ingredients}.

\subsection{Starting heuristic and primal heuristic}
\label{sec:heu}

Our starting heuristic is a greedy heuristic. We initialize the (partial) 
solution $P$ by randomly picking a location $j \in \pts$ to open a 
facility. We then grow $P$ by iteratively adding additional locations to 
$P$ in a greedy fashion until $|P|=p$. As a greedy criterion to 
choose the location to add to $P$, we take the location $j \in \pts 
\setminus 
P$ which has the largest $\alpha$-distance to $|P|$. We note that 
if $|P|<\alpha$ this criterion is not well-defined, and thus in 
this 
case we use the $|P|$-distance. We run this heuristic \texttt{startHeur} 
times before we start with the \BC and initialize the \BC with the best 
solution found.

Our primal heuristic is a greedy heuristic driven by the values $y^*$ of the 
$y$-variables of the linear relaxation at the nodes of the \BC tree. The 
heuristic simply sorts the locations $j \in \pts$ in descending order according 
to $y_j^*$ and picks the $p$-largest ones as a solution. The primal heuristic 
is implemented within the \elli{\texttt{HeuristicCallback}} of CPLEX, which is 
the 
mixed-integer programming solver we are using.

\subsection{Implementation details of the branch-and-cut based on 
(APC1)
\label{sec:bc1}} 

\subsubsection{Variable fixing}

We use the solution value $UB$ from the solution obtained by the starting 
heuristic to fix the $x$-variables to zero as described in Theorem 
\ref{thm:optcuts} at initialization. During the \BC we continue with this 
variable fixing procedure by adding these fixings in the 
\blue{\texttt{UserCutCallback}} of CPLEX in case an improved primal solution 
found 
during the \BC allows additional fixings. This callback gets called by CPLEX 
whenever the solver encounters a fractional solution during the solution 
process.

\subsubsection{Overall separation scheme}

We separate the following inequalities in the branch-and-cut, where the 
order below gives the order in which we do the separation.

\begin{enumerate}
	\itemsep0em
	\item valid inequalities \eqref{cut:sumdijxij_alphaz}/their lifted version \eqref{cut:sumdijxij_alphazlifted}
	\item inequalities \eqref{pc1:dx}/their lifted version 
	\eqref{pc1:dxlifted}  from the original formulation
	\item inequalities \eqref{pc1:xy} from the original formulation
	\item optimality-based inequalities \eqref{eq:optcuts}
	\item optimality-based inequalities \eqref{cut:assignedToLaterSet}
\end{enumerate}

The inequalities listed above are separated within the 
\elli{\texttt{UserCutCallback}}. Inequalities \eqref{pc1:dx} and \eqref{pc1:xy} 
from 
the formulation, which are needed for the correctness of our algorithm, are 
also separated within the \texttt{LazyConstraintCallback}, which gets called by 
CPLEX for each integer solution (i.e., each potential new feasible solution). 
We perform at most \texttt{maxSepRoot} separation-rounds at the root-node and 
at most \texttt{maxSepTree} separation-rounds at the other nodes of the \BC 
tree. In the root-node, we add at most \texttt{maxIneqsRoot} violated 
inequalities in a separation-round and at the other nodes, we add at most 
\texttt{maxIneqsTree} violated inequalities. The parameter-values we used in 
our computations are given in Section \ref{sec:results}. Note that depending on 
the setting selected, in the computational study not all the inequalities above 
are actually used. For more details see Section \ref{sec:ingredients}.

\subsubsection{Details about the separation procedures}

All inequalities except \eqref{cut:assignedToLaterSet} are separated by 
enumeration. We note that the lifted inequalities 
\eqref{cut:sumdijxij_alphazlifted} and \eqref{pc1:dxlifted} depend on the 
current lower bound $LB$ and the inequalities \eqref{eq:optcuts} depend 
on the 
current upper bound $UB$. Thus, these inequalities can potentially be added 
again in a stronger version for fixed $i$ and $j$ or for a fixed $i$, 
when 
an improved bound becomes available. For this reason, we add them with the 
CPLEX-option \texttt{purgeable}, which allows CPLEX to remove added 
inequalities 
if they are deemed no longer useful by CPLEX. Moreover, during the \BC tree, we 
can use the local lower bounds from the nodes of the \BC tree as $LB$ for 
the inequalities \eqref{pc1:dxlifted} and 
\eqref{cut:sumdijxij_alphazlifted}. 
Naturally, the inequalities are then only valid for the subtree starting at 
this node. CPLEX allows to add such locally valid inequalities with the method 
\texttt{addLocal}\footnote{Unfortunately it is not possible to combine 
\texttt{addLocal} with \texttt{purgeable}. Thus, outside of the root-node, the 
linear programs solved within the \BC tree can contain redundant 
inequalities.}. When 
separating the inequalities \eqref{pc1:xy} and \eqref{pc1:dx}, for each 
point 
$i$ we add the ones corresponding to the \texttt{numInitAPC1} nearest locations 
$j$ at initialization.

The separation routine for the inequalities 
\eqref{cut:assignedToLaterSet} is a heuristic. For a given location $i \in 
\pts$, our goal is to find a set $X \subseteq \pts 
\setminus \{i\}$ 
and a set $N_\alpha \subseteq \pts \setminus (X \cup \{i\})$, with
$|N_\alpha|=\alpha$
and such that $N_\alpha$ contains only $j \in \pts \setminus (X \cup \{i\})$ 
with $d_{ij}<\min_{j' \in X} d_{ij'}$.
For any such $X$ and $N_\alpha$
\begin{align}\label{cut:assignedToLaterSetRelaxed}
\sum_{j \in N_\alpha} y_{j}+ \sum_{j \in X} x_{ij} \leq \alpha
\end{align}
is a relaxation of the valid inequality~\eqref{cut:assignedToLaterSet}, and 
hence also~\eqref{cut:assignedToLaterSetRelaxed} is a valid inequality.
Thus, we want to heuristically find such sets $X$ and $N_\alpha$
which maximize $\sum_{j \in N_\alpha} y^*_{j}+ \sum_{j \in X} x^*_{ij}$,  
where $(x^*,y^*)$ is the solution of the linear programming-relaxation at 
the nodes of the \BC tree. Then, if we have that $\sum_{j \in 
N_\alpha} y^*_{j}+ 
\sum_{j \in X} x^*_{ij}>\alpha$, we have obtained a violated 
inequality~\eqref{cut:assignedToLaterSetRelaxed} and thus also a violated 
inequality~\eqref{cut:assignedToLaterSet}. 

The heuristic proceeds as follows: Let 
$\pts_i$ be the locations $j \in \pts \setminus \{i\}$ sorted in 
descending order 
according to $d_{ij}$. We initialize $X$ with the first entry 
of $\pts_i$. Based on $X$, all potential candidates of $N_\alpha$ are 
all $j \in \pts \setminus (X \cup \{i\})$ with $d_{ij}<\min_{j' \in X} 
d_{ij'}$. To obtain $N_\alpha$, we sort all candidates $j$ according to 
their $y_j^*$-value in descending order, and take 
the $\alpha$ largest ones. If the 
inequality~\eqref{cut:assignedToLaterSetRelaxed}  
implied by $X$ and $N_\alpha$ is violated, we stop and 
add~\eqref{cut:assignedToLaterSetRelaxed} for $N_\alpha$ and this $X$, if 
not, we 
continue by adding the next entry from $\pts_i$ to $X$ and repeat the 
procedure.

\subsubsection{Branching priorities}

CPLEX allows to set branching priorities on the variables, which it then takes 
into account during the \BC. We set the priorities of the $y$-variables to 
100\footnote{Every nonegative value should 
	already be fine to give 
	higher 
	priority, the documentation is unfortunately not very clear about this, see 
	\url{https://www.ibm.com/docs/en/icos/20.1.0?topic=cm-setpriority-method}.} 
	and the priorities of the $x$-variables are 
left at the default value of zero in 
order to force CPLEX to branch on the $y$-variables first. This is done, as 
fixing $y$-variables is likely to have more structural impact on the 
linear programming relaxations compared to fixing $x$-variables. 

\subsection{Implementation details of the branch-and-cut based on 
(APC2)\label{sec:bc2}} 

\subsubsection{Variable fixings}

Similar to our approach for \myref{APC1}, we use the solution value $UB$ from 
the solution obtained by the starting heuristic for variable fixing, i.e., we 
fix the $u$-variables to zero as described in Theorem \ref{thm:optcutsElloumi} 
at initialization. Moreover, we also continue these fixings in the 
\elli{\texttt{UserCutCallback}} whenever an improved incumbent is found. 

Furthermore, we 
also fix the $u$-variables to one in the \elli{\texttt{UserCutCallback}} using 
the 
available (local) lower 
bound $LB$ at the current branch-and-cut node and the theory provided in 
Lemma \ref{thm:firstFractionalFixed}. Note that 
Lemma~\ref{thm:firstFractionalFixed} allows us to fix one fractional 
$u$-variable in each 
separation round. 
Thus, to speed-up the fixing, we first check if there are $k$ such 
that $u_k$ is fractional and $d_k \leq LB$, i.e., we check if there are  
$u$-variables that we can fix according to 
Theorem~\ref{thm:feascutsElloumi}. If yes, 
under all the $u$-variables fulfilling the conditions, we fix the one 
corresponding 
to the largest distance. By constraints \eqref{pc3:ulink} this setting will 
also set all variables corresponding to smaller distances to one. If there is 
no variable fulfilling this condition, then we use 
Lemma \ref{thm:firstFractionalFixed} for fixing. As we use the local lower 
bound for fixing, we add the fixing with the method \texttt{addLocal}.

\subsubsection{Details about the separation scheme}

We have implemented a separation routine for the inequalities 
\eqref{pc3:sumyu}. This allows us to dynamically add them when needed instead 
of adding all of them at initialization. This is an attractive option due to 
the structure of the formulation (in particular constraints \eqref{pc3:sumyu}) 
in combination with Lemma \ref{thm:firstFractionalFixed}. As this lemma 
provides results to fix $u$-variables to one, we may not need to add all 
inequalities \eqref{pc3:sumyu} to correctly measure the objective function 
value. 

Our separation routine is based on enumeration. However, we add at most 
one violated inequality \eqref{pc3:sumyu} per location $i \in \pts$ in each 
round of separation. In order to determine which inequality we add, if there is 
more than one inequality \eqref{pc3:sumyu} violated for a location $i$, we 
 compute 
$violation(u^*,y^*,i,k)=\alpha u^*_k +\sum_{j \in 
N\setminus\{i\}:d_{ij}<d_k}y^*_j-\alpha(1- y^*_i)$, where $(u^*,y^*)$ is the 
solution of 
the linear programming-relaxation at the nodes of the \BC tree. All 
inequalities with $violation(u^*,y^*,i,k)<0$ are violated.
Then we calculate the score $s=-violation(u^*,y^*,i,k) \cdot d_{ik}$. 
 With the score, we 
try to find a $k$ which gives a good balance between violation and effect on 
the objective function value.
When we apply the separation-approach, we initialize our \BC with all the 
inequalities \eqref{pc3:sumyu} corresponding to the \texttt{nInitAPC2} smallest 
distances of the instance. Since the inequalities \eqref{pc3:sumyu} are needed 
for correctness of the formulation, we call the separation routine both in the 
\elli{\texttt{UserCutCallback}} and also the 
\elli{\texttt{LazyConstraintCallback}}.

Regarding the number of separation rounds and the number of added violated 
 inequalities, we use the same strategy as described in Section \ref{sec:bc1}.

\subsubsection{Branching priorities}
Similar to the \BC for \myref{APC1}, we set the values of the branching 
priorities of the $y$-variables to 100, and the priorities of the $u$-variables 
are 
left at the 
default value of zero.

\section{Computational results} 
\label{sec:results}

We implemented our \BC algorithms in C++ using CPLEX 20.1. The runs were made on a single core of an Intel Xeon E5-2670v2 machine with 
2.5GHz and 6GB of RAM, and all CPLEX settings were 
left on their default values, except the branching priorities which we set as 
described in 
Section~\ref{sec:implementation}. We have set a time limit of 1800 seconds.

\subsection{Instances}

We considered two sets of instances from the literature in our computational study. The details of these sets are given below.

\begin{itemize}
	\itemsep0em
\item  \texttt{TSPLIB}: This  instance set is based on the TSP-library 
\citep{reinelt1991tsplib} and was used in \citet{sanchez2022grasp} with 
$\alpha=2,3$. In particular, the instances \texttt{att48}, \texttt{eil101}, 
\texttt{ch150}, \texttt{pr439}, \texttt{rat575}, 
\texttt{rat783}, \texttt{pr1002} and \texttt{rl1323} were used with $p \in 
\{10,20,\ldots,130,140\}$. The number in the instance-name gives the number of 
locations $|\pts|$. In these instances all locations are given as 
two-dimensional coordinates, and the Euclidean distance is used as a distance 
function. The instance set contains 154 instances.

We note that \citet{sanchez2022grasp} did not use all values of $p$ 
for all instances. In our computational study we considered the same 
combinations of instances and $p$ as \citet{sanchez2022grasp}. For the  
used values of $p$ for each instance see e.g., Tables~\ref{ta:tsplib1} 
and~\ref{ta:tsplib2}.

\item \texttt{pmedian}: This instance set is based on the OR-library 
\citep{beasley1990or}. It was used in \citet{mousavi2023exploiting} with 
$\alpha=2$. Each instance is given as a graph, and to obtain the distances 
between all the locations $\pts$ (nodes in the graph) an all-pair shortest-path 
computation needs to be done. In these instances, all the distances are 
integer. The number of locations $|\pts|$ is between $100$ and $900$, and $p$ 
is between $5$ and $200$. Each of these instances has a value of $p$ encoded in 
the instance. For the concrete values of $|\pts|$ and $p$ for each instance see 
Table~\ref{ta:pmed}. The instance set contains 40 instances.
\end{itemize}

\newcommand{\aaa}{\texttt{1}\xspace}
\newcommand{\ah}{\texttt{1H}\xspace}
\newcommand{\aha}{\texttt{1HS}\xspace}
\newcommand{\ahav}{\texttt{1HSV}\xspace}
\newcommand{\ahavl}{\texttt{1HSVL}\xspace}
\newcommand{\ahavlo}{\texttt{1HSVLO}\xspace}

\newcommand{\bbb}{\texttt{2}\xspace}
\newcommand{\bh}{\texttt{2H}\xspace}
\newcommand{\bhv}{\texttt{2HV}\xspace}
\newcommand{\bhvs}{\texttt{2HVS}\xspace}
\newcommand{\bhvsl}{\texttt{2HVSL}\xspace}

\subsection{Analysis of the ingredients of our branch-and-cut algorithms\label{sec:ingredients}}

To analyze the effect of the ingredients of our \BC algorithms, we performed a 
computational study on a subset of the instances, namely the instances 
\texttt{att48}, \texttt{eil101}, \texttt{ch150}.
We compare the following different settings for the \BC based on \myref{APC1}:

\begin{itemize}
	\itemsep0em
	\item \aaa: Directly solving \myref{APC1} without any additional ingredients
	\item \ah: Adding the starting heuristic, the primal heuristic and the 
	variable fixing based on the upper bound according to Theorem 
	\ref{thm:optcuts}
	\item \aha: Setting \ah with separation of the inequalities \eqref{pc1:xy} 
	and the inequalities \eqref{pc1:dx} (instead of adding them in the 
	beginning) as described in Section \ref{sec:bc1} 
	\item \ahav: Setting \aha together with the valid inequalities \eqref{cut:sumdijxij_alphaz} and \eqref{cut:yixij}
	\item \ahavl: Setting \ahav together with 
	the lifted version \eqref{pc1:dxlifted} of the inequalities \eqref{pc1:dx}
	and also 
	the lifted version \eqref{cut:sumdijxij_alphazlifted} of the inequalities 
	\eqref{cut:sumdijxij_alphaz}
	\item \ahavlo: Setting \ahavl together with the optimality-preserving inequalities \eqref{cut:assignedToLaterSet} and \eqref{eq:optcuts} 
\end{itemize}

For the \BC based on \myref{APC2} the following settings are considered:

\begin{itemize}
	\itemsep0em
	\item \bbb: Directly solving \myref{APC2} without any additional ingredients
	\item \bh: Adding the starting heuristic, the primal heuristic and the 
	variable fixing based on the upper bound according to Theorem 
	\ref{thm:optcutsElloumi}
	\item \bhv: Setting \bh with the valid inequalities \eqref{cut:ukyj} 
	replacing the corresponding inequalities \eqref{pc3:sumyu} according to 
	Observation~\ref{obs:elConstraintsReplacement}
	\item \bhvs: Setting \bhv with separation of the inequalities 
	\eqref{pc3:ulink} (instead of adding them in the beginning) as described in 
	Section \ref{sec:bc2}
	\item \bhvsl: Setting \bhvs with the variable fixing based on the lower bound according to Theorem \ref{thm:feascutsElloumi} and Lemma \ref{thm:firstFractionalFixed} as described in Section \ref{sec:bc2}
 	\end{itemize}

The following parameter 
values were used for the \BC algorithms, they were determined in preliminary 
computations: \texttt{startHeur}: 10, \texttt{maxIneqsRoot}: 50, 
\texttt{maxIneqsTree}: 20, \texttt{maxSepRoot}: 100, \texttt{maxSepTree}: 1, 
\texttt{numInitAPC1}: 10, \texttt{numInitAPC2}: 100 for the instance set 
\texttt{TSPLIB} and 10 for the instance set \texttt{pmedian}.
%
We have used a different parameter for \texttt{numIntiAPC2} depending on the 
instance set, as the distance-structure of the instances is quite different. In 
particular, for \texttt{TSPLIB} the distances are essentially unique (as they 
are Euclidean distances) while for \texttt{pmedian} many are similar (as they 
are shortest path distances on a graph). Thus, for \texttt{pmedian} we would 
often add all inequalities \eqref{pc3:sumyu} at initialization for a parameter 
value of 100, as there are usually less than 100 different distances in an 
instance.

In Figures \ref{fig:apc1} and \ref{fig:apc2} we show a plot of the runtimes. We 
see that for both formulations the largest positive effect is achieved by 
adding the heuristics with the associated variable fixing based on the upper 
bound. This can be explained by the fact that with the variable fixing the 
linear programs 
which are needed to be solved are getting much smaller. Moreover, the lifting 
procedures for \myref{APC1} and the variable fixing based on the lower bound 
for \myref{APC2} also have a discernible (incremental) effect. This is in line 
with both the computational results in \citet{gaar2022scaleable} for a 
similar lifting procedure for the \PCP, and the theoretical 
result 
provided in Section \ref{sec:polyhedral}. 

Starting to use separation of 
inequalities which are needed in the formulations (i.e., settings \aha  and 
\bhvs) instead of adding all of these inequalities at initialization has a 
rather neutral effect on the selected instances. This can be explained by the 
fact that these instances are quite small, for the larger instances in our 
sets, preliminary computations showed that we cannot even solve the 
root-relaxation (for both \myref{APC1} and \myref{APC2}) due to either running 
into the time limit or due to exceeding the available memory. 

The valid 
inequalities \eqref{cut:ukyj} also have no visible effect. A potential 
explanation of this is that modern mixed-integer programming solvers like CPLEX 
are quite effective in strengthening given inequalities and may already 
transform \eqref{pc3:ulink} into \eqref{cut:ukyj} automatically whenever it is 
possible.
Finally, adding the optimality-preserving inequalities for \myref{APC1} has a 
negative effect. This is consistent with 
Theorem~\ref{thm:convergenceWithOptimalityInequ}, which shows that at 
convergence the inequalities~\eqref{cut:assignedToLaterSet} 
and~\eqref{eq:optcuts} are not further 
improving the bound. 
 
\begin{figure}[tbh]
	\begin{center}

	\begin{subfigure}[b]{0.7\linewidth}
		\centering
		\includegraphics[width=\textwidth]{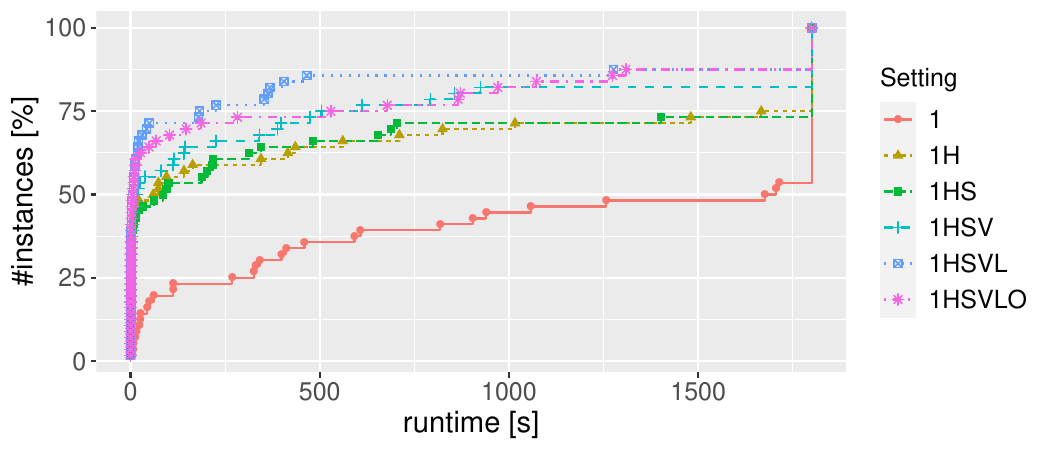}
		\caption{\myref{APC1}\label{fig:apc1}}
	\end{subfigure}

	\begin{subfigure}[b]{0.7\linewidth}
		\centering
		\includegraphics[width=\textwidth]{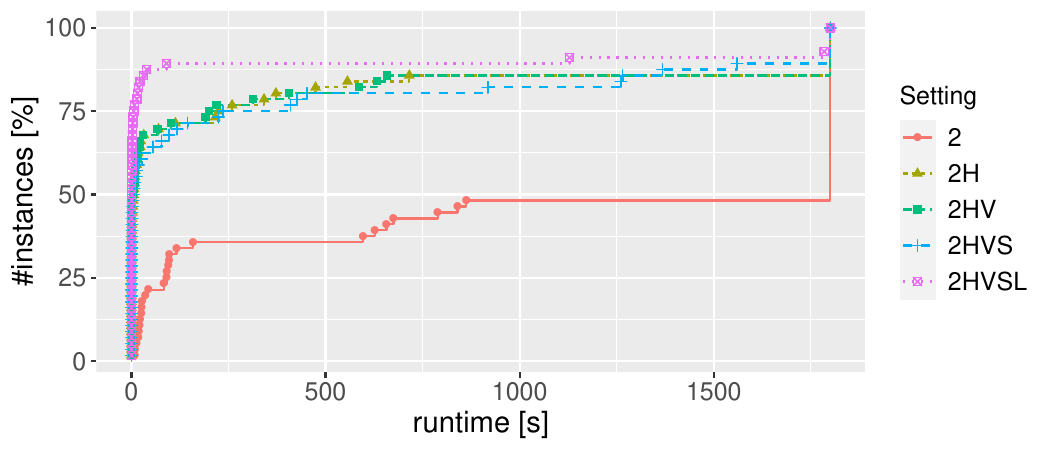}
		\caption{\myref{APC2}\label{fig:apc2}}
	\end{subfigure}
	\caption{Runtime for different settings of our \BC algorithms on a subset 
	of the instances.\label{fig:runtime}}
\end{center}
\end{figure}

\subsection{Comparison with approaches from the literature 
\label{sec:resultdetails}}

In this section we provide a detailed comparison with the existing approaches 
from literature, namely the GRASP of \citet{sanchez2022grasp} and the local 
search of \citet{mousavi2023exploiting} on the instances used in the respective 
works. We compare the existing approaches with the best settings for both of 
our \BC algorithms, i.e., \ahavl for the one based on \myref{APC1} and \bhvsl 
for the one based on \myref{APC2}.

In Tables \ref{ta:tsplib1}-\ref{ta:tsplib4} we give the comparison with 
\citet{sanchez2022grasp}. For our approaches we report the runtime (columns 
t[s] with entry TL 
indicating that the time limit of 1800 seconds was reached), the obtained upper 
bound (i.e., the objective function value of the best obtained solution, 
columns UB) and lower bound (columns LB) and the number of nodes in the \BC 
tree (columns nBC). Since the approach of \citet{sanchez2022grasp} is a 
heuristic, only 
upper bounds and runtime can be reported for their approach. We note that the 
runs in 
\citet{sanchez2022grasp} were made on a AMD Ryzen 5 3600 with 2.2 GHz and 16GB 
RAM. The best values for UB, LB and runtime are indicated in bold in the 
tables. For the runtime, we just consider our branch-and-cut approaches, while 
for the UB we consider all three approaches to determine these best values.

The tables show that for 114 out of 154 instances our approaches improve on the 
best solution value obtained in \citet{sanchez2022grasp} and additionally for 7 
instances, we match the best solution value. Our approaches manage to solve 76 
instances to proven optimality. For some of the instances, our approaches are 
more than two orders of magnitude faster than the GRASP (e.g., instance  
\texttt{pr439} with $p=30$ and $\alpha=2$). Comparing \ahavl with \bhvsl, we 
can see that \bhvsl performs better overall, in particular for larger 
instances. This can be explained by the fact that due to the structure of the 
formulations, the variable fixing procedures can fix much more variables when 
using \myref{APC2} compared to \myref{APC1}. We can also see that for 
$\alpha=3$ the problem is harder than for $\alpha=2$.  


\begin{table}[tbh]
	\centering
	\caption{Detailed results for instance set \texttt{TSPLIB} with $\alpha=2$, 
	part one. 
	\label{ta:tsplib1}} 
	\begingroup\footnotesize
	\begin{tabular}{lrr|rrrr|rrrr|rr}
		\toprule
		& & & \multicolumn{4}{|c|}{$\ahavl$} & \multicolumn{4}{|c|}{$\bhvsl$} & 
		\multicolumn{2}{|c}{\cite{sanchez2022grasp}} \\ name & $|N|$ & $p$ & UB 
		& LB & t[s] & nBC & UB & LB & t[s]  & nBC & UB & t[s] \\ \midrule
		att48 & 48 &  10 & \textbf{1592.12} & \textbf{1592.12} & 1.08 & 1 & 
		\textbf{1592.12} & \textbf{1592.12} & \textbf{0.26} & 0 & 
		\textbf{1592.12} &    5.14 \\ 
		att48 & 48 &  20 & \textbf{1061.69} & \textbf{1061.69} & 0.51 & 0 & 
		\textbf{1061.69} & \textbf{1061.69} & \textbf{0.08} & 0 & 1130.85 &    
		1.21 \\ 
		att48 & 48 &  30 & \textbf{729.90} & \textbf{729.90} & 0.34 & 0 & 
		\textbf{729.90} & \textbf{729.90} & \textbf{0.09} & 0 & 936.38 &    
		0.42 \\ 
		att48 & 48 &  40 & \textbf{485.06} & \textbf{485.06} & 0.07 & 0 & 
		\textbf{485.06} & \textbf{485.06} & \textbf{0.02} & 0 & 532.08 &    
		0.07 \\ 
		eil101 & 101 &  10 & \textbf{21.21} & \textbf{21.21} & 30.29 & 85 & 
		\textbf{21.21} & \textbf{21.21} & \textbf{1.79} & 13 & \textbf{21.21} 
		&   68.12 \\ 
		eil101 & 101 &  20 & \textbf{13.60} & \textbf{13.60} & 49.58 & 576 & 
		\textbf{13.60} & \textbf{13.60} & \textbf{2.07} & 35 & 14.14 &   28.27 
		\\ 
		eil101 & 101 &  30 & \textbf{11.05} & \textbf{11.05} & 20.66 & 364 & 
		\textbf{11.05} & \textbf{11.05} & \textbf{1.90} & 358 & 12.00 &   10.63 
		\\ 
		eil101 & 101 &  40 & \textbf{9.06} & \textbf{9.06} & 7.59 & 50 & 
		\textbf{9.06} & \textbf{9.06} & \textbf{0.54} & 68 & 9.43 &    6.19 \\ 
		eil101 & 101 &  50 & \textbf{8.06} & \textbf{8.06} & 8.48 & 340 & 
		\textbf{8.06} & \textbf{8.06} & \textbf{0.19} & 0 & 8.60 &    3.19 \\ 
		eil101 & 101 &  60 & \textbf{7.07} & \textbf{7.07} & 0.84 & 0 & 
		\textbf{7.07} & \textbf{7.07} & \textbf{0.14} & 0 & 8.25 &    1.94 \\ 
		eil101 & 101 &  70 & \textbf{6.32} & \textbf{6.32} & 0.78 & 0 & 
		\textbf{6.32} & \textbf{6.32} & \textbf{0.10} & 0 & 7.28 &    0.96 \\ 
		eil101 & 101 &  80 & \textbf{5.10} & \textbf{5.10} & 0.63 & 0 & 
		\textbf{5.10} & \textbf{5.10} & \textbf{0.08} & 0 & 6.32 &    0.43 \\ 
		eil101 & 101 &  90 & \textbf{4.12} & \textbf{4.12} & 0.67 & 1 & 
		\textbf{4.12} & \textbf{4.12} & \textbf{0.08} & 0 & 5.00 &    0.11 \\ 
		eil101 & 101 & 100 & \textbf{2.24} & \textbf{2.24} & 0.14 & 0 & 
		\textbf{2.24} & \textbf{2.24} & \textbf{0.12} & 0 & 2.83 &    0.05 \\ 
		ch150 & 150 &  10 & \textbf{205.66} & \textbf{205.66} & 1276.32 & 1364 
		& \textbf{205.66} & \textbf{205.66} & \textbf{4.27} & 0 & 
		\textbf{205.66} &  223.16 \\ 
		ch150 & 150 &  20 & \textbf{138.69} & \textbf{138.69} & 465.80 & 3235 & 
		\textbf{138.69} & \textbf{138.69} & \textbf{7.83} & 297 & 141.53 &   
		94.75 \\ 
		ch150 & 150 &  30 & \textbf{108.03} & \textbf{108.03} & 353.45 & 5441 & 
		\textbf{108.03} & \textbf{108.03} & \textbf{22.43} & 3614 & 112.51 &   
		55.58 \\ 
		ch150 & 150 &  40 & \textbf{92.67} & \textbf{92.67} & 226.43 & 5596 & 
		\textbf{92.67} & \textbf{92.67} & \textbf{6.02} & 932 & 96.42 &   31.74 
		\\ 
		ch150 & 150 &  50 & \textbf{82.11} & \textbf{82.11} & 42.61 & 935 & 
		\textbf{82.11} & \textbf{82.11} & \textbf{2.71} & 746 & 87.69 &   18.10 
		\\ 
		ch150 & 150 &  60 & \textbf{70.71} & \textbf{70.71} & 12.19 & 29 & 
		\textbf{70.71} & \textbf{70.71} & \textbf{0.73} & 0 & 78.42 &   12.24 
		\\ 
		ch150 & 150 &  70 & \textbf{64.45} & \textbf{64.45} & 3.90 & 1 & 
		\textbf{64.45} & \textbf{64.45} & \textbf{0.55} & 0 & 68.23 &    8.20 
		\\ 
		ch150 & 150 &  80 & \textbf{58.37} & \textbf{58.37} & 2.80 & 0 & 
		\textbf{58.37} & \textbf{58.37} & \textbf{0.50} & 0 & 64.45 &    5.57 
		\\ 
		ch150 & 150 &  90 & \textbf{51.50} & \textbf{51.50} & 2.08 & 0 & 
		\textbf{51.50} & \textbf{51.50} & \textbf{0.35} & 0 & 62.04 &    3.63 
		\\ 
		ch150 & 150 & 100 & \textbf{46.49} & \textbf{46.49} & 1.48 & 0 & 
		\textbf{46.49} & \textbf{46.49} & \textbf{0.30} & 0 & 53.21 &    2.35 
		\\ 
		ch150 & 150 & 110 & \textbf{43.77} & \textbf{43.77} & 1.29 & 0 & 
		\textbf{43.77} & \textbf{43.77} & \textbf{0.31} & 0 & 51.65 &    1.36 
		\\ 
		ch150 & 150 & 120 & \textbf{39.32} & \textbf{39.32} & 0.98 & 0 & 
		\textbf{39.32} & \textbf{39.32} & \textbf{0.27} & 0 & 50.30 &    0.72 
		\\ 
		ch150 & 150 & 130 & \textbf{36.02} & \textbf{36.02} & 0.52 & 0 & 
		\textbf{36.02} & \textbf{36.02} & \textbf{0.28} & 0 & 46.63 &    0.31 
		\\ 
		ch150 & 150 & 140 & \textbf{29.69} & \textbf{29.69} & 0.48 & 0 & 
		\textbf{29.69} & \textbf{29.69} & \textbf{0.29} & 0 & 42.30 &    0.14 
		\\ 
				\bottomrule
	\end{tabular}
\endgroup
\end{table}

		\begin{table}[tbh]
			\centering
			\caption{Detailed results for instance set \texttt{TSPLIB} with 
			$\alpha=2$, part two. 
				\label{ta:tsplib2}} 
			\begingroup\footnotesize
			\begin{tabular}{lrr|rrrr|rrrr|rr}
				\toprule
				& & & \multicolumn{4}{|c|}{$\ahavl$} & 
				\multicolumn{4}{|c|}{$\bhvsl$} & 
				\multicolumn{2}{|c}{\cite{sanchez2022grasp}} \\ name & $|N|$ & 
				$p$ & UB 
				& LB & t[s] & nBC & UB & LB & t[s]  & nBC & UB & t[s] \\ 
				\midrule
		pr439 & 439 &  10 & 4134.01 & 1944.76 & TL & 606 & \textbf{3146.63} & 
		\textbf{3146.63} & \textbf{12.29} & 0 & \textbf{3146.63} & TL \\ 
		pr439 & 439 &  20 & 2579.00 & 1337.89 & TL & 809 & \textbf{2177.44} & 
		\textbf{2177.44} & \textbf{27.75} & 136 & 2226.26 & TL \\ 
		pr439 & 439 &  30 & 1950.00 & 1020.79 & TL & 1082 & \textbf{1475.85} & 
		\textbf{1475.85} & \textbf{7.73} & 0 & 1500.21 & TL \\ 
		pr439 & 439 &  40 & 1614.00 & 865.38 & TL & 1097 & \textbf{1185.59} & 
		\textbf{1185.59} & \textbf{12.41} & 10 & 1253.99 & TL \\ 
		pr439 & 439 &  50 & 1308.63 & 722.83 & TL & 1182 & \textbf{984.89} & 
		\textbf{984.89} & \textbf{9.58} & 0 & 1068.00 & 1327.63 \\ 
		pr439 & 439 &  60 & 1116.08 & 653.73 & TL & 1201 & \textbf{883.88} & 
		\textbf{867.64} & TL & 32210 & 975.00 &  918.13 \\ 
		pr439 & 439 &  70 & 976.28 & 596.96 & TL & 1288 & \textbf{726.72} & 
		\textbf{726.72} & \textbf{1050.05} & 18676 & 905.54 &  639.50 \\ 
		pr439 & 439 &  80 & 855.13 & 554.50 & TL & 1629 & \textbf{637.38} & 
		\textbf{637.38} & \textbf{178.91} & 2513 & 731.86 &  509.87 \\ 
		pr439 & 439 &  90 & 742.04 & 516.31 & TL & 1121 & \textbf{583.10} & 
		\textbf{575.54} & TL & 43141 & 715.89 &  405.64 \\ 
		rat575 & 575 &  10 & 160.80 & 43.39 & TL & 293 & \textbf{116.10} & 
		\textbf{116.10} & \textbf{24.75} & 0 & 116.87 & 1773.45 \\ 
		rat575 & 575 &  20 & 97.45 & 37.73 & TL & 328 & \textbf{72.62} & 
		\textbf{72.62} & \textbf{265.55} & 1572 & 74.25 &  988.02 \\ 
		rat575 & 575 &  30 & 76.03 & 34.82 & TL & 433 & \textbf{59.14} & 
		\textbf{56.38} & TL & 8426 & 60.67 &  666.00 \\ 
		rat575 & 575 &  40 & 64.14 & 32.93 & TL & 396 & \textbf{50.25} & 
		\textbf{47.71} & TL & 9941 & 51.40 &  565.12 \\ 
		rat575 & 575 &  50 & 55.15 & 31.31 & TL & 491 & \textbf{45.88} & 
		\textbf{41.79} & TL & 9826 & 46.52 &  402.00 \\ 
		rat575 & 575 &  60 & 49.25 & 30.40 & TL & 468 & \textbf{41.15} & 
		\textbf{37.48} & TL & 12053 & 41.59 &  290.62 \\ 
		rat575 & 575 &  70 & 44.55 & 29.24 & TL & 376 & \textbf{37.48} & 
		\textbf{34.41} & TL & 13237 & 37.70 &  268.17 \\ 
		rat575 & 575 &  80 & 41.01 & 28.45 & TL & 354 & \textbf{34.99} & 
		\textbf{32.02} & TL & 19747 & 35.90 &  221.25 \\ 
		rat575 & 575 &  90 & 37.59 & 27.54 & TL & 357 & \textbf{32.45} & 
		\textbf{29.70} & TL & 23787 & 33.60 &  158.91 \\ 
		rat575 & 575 & 100 & 36.00 & 26.68 & TL & 300 & \textbf{30.00} & 
		\textbf{27.86} & TL & 27393 & 31.38 &  122.60 \\ 
		rat783 & 783 &  10 & 193.26 & 41.83 & TL & 45 & \textbf{135.25} & 
		\textbf{135.25} & \textbf{34.92} & 0 & 138.60 & TL \\ 
		rat783 & 783 &  20 & 109.42 & 38.17 & TL & 141 & \textbf{83.10} & 
		\textbf{83.10} & \textbf{25.68} & 0 & 86.38 & TL \\ 
		rat783 & 783 &  30 & 92.05 & 34.79 & TL & 134 & \textbf{67.88} & 
		\textbf{66.21} & TL & 2819 & 70.84 & 1717.02 \\ 
		rat783 & 783 &  40 & 75.72 & 32.85 & TL & 160 & \textbf{57.43} & 
		\textbf{55.60} & TL & 4464 & 60.14 & 1695.86 \\ 
		rat783 & 783 &  50 & 65.19 & 31.76 & TL & 100 & 55.04 & \textbf{49.20} 
		& TL & 2802 & \textbf{52.80} & 1212.41 \\ 
		rat783 & 783 &  60 & 56.59 & 31.17 & TL & 151 & 49.04 & \textbf{44.05} 
		& TL & 3872 & \textbf{48.75} & 1044.99 \\ 
		rat783 & 783 &  70 & 53.23 & 30.48 & TL & 90 & \textbf{44.20} & 
		\textbf{40.31} & TL & 4542 & 44.41 & 1038.25 \\ 
		rat783 & 783 &  80 & 49.65 & 29.97 & TL & 107 & \textbf{41.68} & 
		\textbf{37.36} & TL & 5300 & 42.43 &  748.93 \\ 
		rat783 & 783 &  90 & 46.39 & 29.26 & TL & 99 & 40.36 & \textbf{35.00} & 
		TL & 6154 & \textbf{39.20} &  722.81 \\ 
		rat783 & 783 & 100 & 42.64 & 28.76 & TL & 99 & 37.64 & \textbf{32.98} & 
		TL & 7309 & \textbf{37.48} &  536.23 \\ 
		pr1002 & 1002 &  10 & 5481.79 & 1223.80 & TL & 0 & \textbf{3853.89} & 
		\textbf{3853.89} & \textbf{48.07} & 0 & \textbf{3853.89} & TL \\ 
		pr1002 & 1002 &  20 & 3479.22 & 1130.48 & TL & 0 & \textbf{2593.26} & 
		\textbf{2543.32} & TL & 3359 & 2710.17 & TL \\ 
		pr1002 & 1002 &  30 & 2731.30 & 1009.07 & TL & 0 & \textbf{2059.73} & 
		\textbf{2008.76} & TL & 4002 & 2150.58 & TL \\ 
		pr1002 & 1002 &  40 & 2214.16 & 992.40 & TL & 18 & \textbf{1746.42} & 
		\textbf{1702.84} & TL & 7920 & 1811.77 & TL \\ 
		pr1002 & 1002 &  50 & 1990.60 & 949.95 & TL & 3 & \textbf{1523.15} & 
		\textbf{1478.03} & TL & 3555 & 1619.41 & TL \\ 
		pr1002 & 1002 &  60 & 1733.49 & 914.11 & TL & 0 & \textbf{1403.57} & 
		\textbf{1315.17} & TL & 6229 & 1431.78 & TL \\ 
		pr1002 & 1002 &  70 & 1555.63 & 851.41 & TL & 0 & 1372.95 & 
		\textbf{1204.16} & TL & 5818 & \textbf{1346.29} & TL \\ 
		pr1002 & 1002 &  80 & 1443.09 & 825.63 & TL & 3 & 1253.99 & 
		\textbf{1104.54} & TL & 6891 & \textbf{1253.00} & TL \\ 
		pr1002 & 1002 &  90 & 1365.65 & 803.99 & TL & 22 & \textbf{1131.37} & 
		\textbf{1033.32} & TL & 9010 & 1170.47 & 1696.72 \\ 
		pr1002 & 1002 & 100 & 1270.99 & 777.90 & TL & 0 & \textbf{1070.05} & 
		\textbf{982.98} & TL & 15670 & 1079.35 & 1337.80 \\ 
		rl1323 & 1323 &  10 & 6657.78 & 1138.05 & TL & 0 & \textbf{4554.09} & 
		\textbf{4554.09} & \textbf{660.59} & 0 & 4694.15 & TL \\ 
		rl1323 & 1323 &  20 & 4025.01 & 1020.45 & TL & 0 & \textbf{3055.56} & 
		\textbf{3016.84} & TL & 252 & 3227.00 & TL \\ 
		rl1323 & 1323 &  30 & 3209.07 & 737.33 & TL & 0 & 2913.42 & 
		\textbf{2372.10} & TL & 224 & \textbf{2563.30} & TL \\ 
		rl1323 & 1323 &  40 & 2592.84 & 916.48 & TL & 0 & \textbf{2039.56} & 
		\textbf{1972.47} & TL & 482 & 2166.96 & TL \\ 
		rl1323 & 1323 &  50 & 2248.99 & 888.87 & TL & 0 & 1958.61 & 
		\textbf{1745.58} & TL & 1390 & \textbf{1907.69} & TL \\ 
		rl1323 & 1323 &  60 & 2027.10 & 861.87 & TL & 0 & \textbf{1710.60} & 
		\textbf{1566.59} & TL & 867 & 1735.40 & TL \\ 
		rl1323 & 1323 &  70 & 1868.99 & 823.70 & TL & 0 & 1647.07 & 
		\textbf{1415.82} & TL & 1425 & \textbf{1595.20} & TL \\ 
		rl1323 & 1323 &  80 & 1702.98 & 802.17 & TL & 0 & 1536.00 & 
		\textbf{1302.31} & TL & 1300 & \textbf{1440.89} & TL \\ 
		rl1323 & 1323 &  90 & 1576.08 & 782.66 & TL & 0 & \textbf{1329.66} & 
		\textbf{1210.04} & TL & 1833 & 1374.72 & TL \\ 
		rl1323 & 1323 & 100 & 1468.95 & 768.94 & TL & 0 & \textbf{1278.10} & 
		\textbf{1126.16} & TL & 2060 & 1293.63 & TL \\ 
		\bottomrule
	\end{tabular}
	\endgroup
\end{table}

\begin{table}[tbh]
	\centering
	\caption{Detailed results for instance set \texttt{TSPLIB} with $\alpha=3$, 
	part one. \label{ta:tsplib3}} 
	\begingroup\footnotesize
	\begin{tabular}{lrr|rrrr|rrrr|rr}
		\toprule
		& & & \multicolumn{4}{|c|}{$\ahavl$} & \multicolumn{4}{|c|}{$\bhvsl$} & 
		\multicolumn{2}{|c}{\cite{sanchez2022grasp}} \\ name & $|N|$ & $p$ & UB 
		& LB & t[s] & nBC & UB & LB & t[s]  & nBC & UB & t[s] \\ \midrule
		att48 & 48 &  10 & \textbf{2081.57} & \textbf{2081.57} & 10.41 & 31 & 
		\textbf{2081.57} & \textbf{2081.57} & \textbf{0.89} & 103 & 2186.31 
		&    6.72 \\ 
		att48 & 48 &  20 & \textbf{1283.35} & \textbf{1283.35} & 3.47 & 14 & 
		\textbf{1283.35} & \textbf{1283.35} & \textbf{0.53} & 48 & 1374.48 &    
		1.61 \\ 
		att48 & 48 &  30 & \textbf{949.29} & \textbf{949.29} & 1.53 & 0 & 
		\textbf{949.29} & \textbf{949.29} & \textbf{0.10} & 0 & 1011.66 &    
		0.54 \\ 
		att48 & 48 &  40 & \textbf{645.88} & \textbf{645.88} & 0.13 & 0 & 
		\textbf{645.88} & \textbf{645.88} & \textbf{0.06} & 0 & 675.00 &    
		0.08 \\ 
		eil101 & 101 &  10 & \textbf{29.43} & \textbf{29.43} & 368.66 & 1338 & 
		\textbf{29.43} & \textbf{29.43} & \textbf{15.66} & 1092 & 
		\textbf{29.43} &   92.44 \\ 
		eil101 & 101 &  20 & \textbf{17.80} & \textbf{17.80} & 363.10 & 3745 & 
		\textbf{17.80} & \textbf{17.80} & \textbf{18.45} & 2253 & 18.03 &   
		43.73 \\ 
		eil101 & 101 &  30 & \textbf{13.15} & \textbf{13.15} & 405.18 & 9499 & 
		\textbf{13.15} & \textbf{13.15} & \textbf{14.06} & 1567 & 14.14 &   
		19.37 \\ 
		eil101 & 101 &  40 & \textbf{11.18} & \textbf{11.18} & 179.44 & 6421 & 
		\textbf{11.18} & \textbf{11.18} & \textbf{3.39} & 800 & 12.04 &    9.71 
		\\ 
		eil101 & 101 &  50 & \textbf{9.43} & \textbf{9.43} & 20.20 & 424 & 
		\textbf{9.43} & \textbf{9.43} & \textbf{1.15} & 449 & 10.63 &    4.74 
		\\ 
		eil101 & 101 &  60 & \textbf{8.06} & \textbf{8.06} & 6.01 & 156 & 
		\textbf{8.06} & \textbf{8.06} & \textbf{0.45} & 220 & 9.06 &    2.22 \\ 
		eil101 & 101 &  70 & \textbf{7.28} & \textbf{7.28} & 2.28 & 0 & 
		\textbf{7.28} & \textbf{7.28} & \textbf{0.22} & 25 & 8.54 &    1.06 \\ 
		eil101 & 101 &  80 & \textbf{6.40} & \textbf{6.40} & 0.84 & 0 & 
		\textbf{6.40} & \textbf{6.40} & \textbf{0.12} & 0 & 7.28 &    0.44 \\ 
		eil101 & 101 &  90 & \textbf{5.00} & \textbf{5.00} & 0.43 & 0 & 
		\textbf{5.00} & \textbf{5.00} & \textbf{0.09} & 0 & 6.08 &    0.11 \\ 
		eil101 & 101 & 100 & \textbf{2.83} & \textbf{2.83} & 0.14 & 0 & 
		\textbf{2.83} & \textbf{2.83} & \textbf{0.07} & 0 & \textbf{2.83} &    
		0.05 \\ 
		ch150 & 150 &  10 & \textbf{297.96} & 205.79 & TL & 1556 & 
		\textbf{297.96} & \textbf{297.96} & \textbf{30.88} & 828 & 298.56 &  
		398.03 \\ 
		ch150 & 150 &  20 & 178.21 & 143.05 & TL & 3732 & \textbf{176.47} & 
		\textbf{176.47} & \textbf{90.44} & 4446 & 179.71 &  150.94 \\ 
		ch150 & 150 &  30 & 140.06 & 121.29 & TL & 11797 & \textbf{137.46} & 
		\textbf{137.46} & \textbf{1128.29} & 75456 & 146.41 &   78.08 \\ 
		ch150 & 150 &  40 & 114.58 & 108.32 & TL & 30854 & \textbf{114.47} & 
		\textbf{111.55} & TL & 157239 & 119.22 &   52.10 \\ 
		ch150 & 150 &  50 & \textbf{100.34} & 96.71 & TL & 38009 & 100.47 & 
		\textbf{98.04} & TL & 244637 & 108.03 &   26.70 \\ 
		ch150 & 150 &  60 & \textbf{90.58} & 86.79 & TL & 55403 & 
		\textbf{90.58} & \textbf{89.09} & TL & 379556 & 97.46 &   17.78 \\ 
		ch150 & 150 &  70 & \textbf{83.19} & 79.80 & TL & 78518 & 83.33 & 
		\textbf{81.91} & TL & 475149 & 92.82 &   13.10 \\ 
		ch150 & 150 &  80 & \textbf{74.93} & \textbf{74.93} & \textbf{182.44} & 
		6308 & \textbf{74.93} & \textbf{74.93} & 1784.19 & 621412 & 83.38 &    
		8.34 \\ 
		ch150 & 150 &  90 & \textbf{67.73} & \textbf{67.73} & \textbf{18.17} & 
		487 & \textbf{67.73} & \textbf{67.73} & 39.35 & 27266 & 79.81 &    4.75 
		\\ 
		ch150 & 150 & 100 & \textbf{63.42} & \textbf{63.42} & 8.67 & 19 & 
		\textbf{63.42} & \textbf{63.42} & \textbf{1.34} & 517 & 69.35 &    3.23 
		\\ 
		ch150 & 150 & 110 & \textbf{59.04} & \textbf{59.04} & 13.41 & 424 & 
		\textbf{59.04} & \textbf{59.04} & \textbf{2.39} & 1753 & 67.22 &    
		1.85 \\ 
		ch150 & 150 & 120 & \textbf{52.97} & \textbf{52.97} & 2.33 & 0 & 
		\textbf{52.97} & \textbf{52.97} & \textbf{0.55} & 0 & 61.29 &    0.95 
		\\ 
		ch150 & 150 & 130 & \textbf{44.46} & \textbf{44.46} & 1.09 & 0 & 
		\textbf{44.46} & \textbf{44.46} & \textbf{0.33} & 0 & 57.50 &    0.41 
		\\ 
		ch150 & 150 & 140 & \textbf{38.56} & \textbf{38.56} & 0.55 & 0 & 
		\textbf{38.56} & \textbf{38.56} & \textbf{0.30} & 0 & 52.20 &    0.16 
		\\ 
		\bottomrule
\end{tabular}
\endgroup
\end{table}
	
		\begin{table}[tbh]
			\centering
			\caption{Detailed results for instance set \texttt{TSPLIB} with 
			$\alpha=3$, 
				part two. \label{ta:tsplib4}} 
			\begingroup\footnotesize
			\begin{tabular}{lrr|rrrr|rrrr|rr}
				\toprule
				& & & \multicolumn{4}{|c|}{$\ahavl$} & 
				\multicolumn{4}{|c|}{$\bhvsl$} & 
				\multicolumn{2}{|c}{\cite{sanchez2022grasp}} \\ name & $|N|$ & 
				$p$ & UB 
				& LB & t[s] & nBC & UB & LB & t[s]  & nBC & UB & t[s] \\ 
				\midrule
		pr439 & 439 &  10 & 5997.08 & 2410.47 & TL & 582 & \textbf{4050.31} & 
		\textbf{4050.31} & \textbf{53.02} & 179 & 4076.23 & TL \\ 
		pr439 & 439 &  20 & 3505.89 & 1548.18 & TL & 769 & \textbf{2683.28} & 
		\textbf{2683.28} & \textbf{79.62} & 786 & 2726.03 & TL \\ 
		pr439 & 439 &  30 & 2520.17 & 1209.18 & TL & 1111 & \textbf{2065.49} & 
		\textbf{2065.49} & \textbf{1082.90} & 7280 & 2231.73 & TL \\ 
		pr439 & 439 &  40 & 2102.38 & 1014.28 & TL & 897 & \textbf{1600.78} & 
		\textbf{1600.78} & \textbf{1304.52} & 24066 & 1644.88 & TL \\ 
		pr439 & 439 &  50 & 1760.86 & 899.70 & TL & 1108 & \textbf{1350.00} & 
		\textbf{1350.00} & \textbf{302.98} & 4413 & 1467.35 & TL \\ 
		pr439 & 439 &  60 & 1550.00 & 753.42 & TL & 1040 & \textbf{1150.27} & 
		\textbf{1120.79} & TL & 19888 & 1340.01 & TL \\ 
		pr439 & 439 &  70 & 1308.63 & 707.85 & TL & 985 & \textbf{1006.23} & 
		\textbf{982.66} & TL & 20694 & 1231.11 & 1316.50 \\ 
		pr439 & 439 &  80 & 1129.71 & 651.34 & TL & 1396 & \textbf{915.49} & 
		\textbf{873.76} & TL & 18587 & 1217.58 &  955.74 \\ 
		pr439 & 439 &  90 & 1025.91 & 603.73 & TL & 1421 & \textbf{813.94} & 
		\textbf{752.26} & TL & 17207 & 986.47 &  723.38 \\ 
		rat575 & 575 &  10 & 220.06 & 44.70 & TL & 242 & \textbf{138.85} & 
		\textbf{138.85} & \textbf{19.41} & 0 & 140.52 & TL \\ 
		rat575 & 575 &  20 & 137.20 & 39.93 & TL & 300 & \textbf{93.43} & 
		\textbf{93.43} & \textbf{705.69} & 2335 & 94.64 & TL \\ 
		rat575 & 575 &  30 & 94.94 & 36.76 & TL & 396 & \textbf{72.09} & 
		\textbf{71.31} & TL & 8725 & 74.52 & 1101.33 \\ 
		rat575 & 575 &  40 & 82.04 & 34.21 & TL & 387 & 66.61 & \textbf{59.49} 
		& TL & 4803 & \textbf{64.88} &  950.51 \\ 
		rat575 & 575 &  50 & 73.00 & 32.58 & TL & 490 & 57.25 & \textbf{51.90} 
		& TL & 6189 & \textbf{56.94} &  717.39 \\ 
		rat575 & 575 &  60 & 63.29 & 31.91 & TL & 489 & 53.74 & \textbf{46.82} 
		& TL & 4661 & \textbf{51.35} &  595.10 \\ 
		rat575 & 575 &  70 & 55.15 & 30.95 & TL & 597 & \textbf{47.42} & 
		\textbf{42.44} & TL & 7099 & 47.85 &  494.17 \\ 
		rat575 & 575 &  80 & 50.96 & 30.29 & TL & 309 & 45.28 & \textbf{39.05} 
		& TL & 7689 & \textbf{44.29} &  448.23 \\ 
		rat575 & 575 &  90 & 48.30 & 29.73 & TL & 493 & 44.69 & \textbf{36.25} 
		& TL & 9866 & \textbf{41.11} &  319.13 \\ 
		rat575 & 575 & 100 & 44.27 & 29.16 & TL & 409 & 40.79 & \textbf{34.13} 
		& TL & 10831 & \textbf{38.63} &  247.96 \\ 
		rat783 & 783 &  10 & 254.92 & 42.13 & TL & 0 & \textbf{163.68} & 
		\textbf{163.68} & \textbf{54.58} & 0 & 166.23 & TL \\ 
		rat783 & 783 &  20 & 162.75 & 40.00 & TL & 46 & \textbf{109.57} & 
		\textbf{109.57} & \textbf{841.93} & 1016 & 112.70 & TL \\ 
		rat783 & 783 &  30 & 111.57 & 37.33 & TL & 83 & \textbf{83.55} & 
		\textbf{83.49} & TL & 3938 & 88.57 & TL \\ 
		rat783 & 783 &  40 & 97.00 & 35.01 & TL & 77 & 76.90 & \textbf{70.18} & 
		TL & 2296 & \textbf{76.03} & TL \\ 
		rat783 & 783 &  50 & 86.58 & 33.37 & TL & 67 & 68.66 & \textbf{60.76} & 
		TL & 2219 & \textbf{66.10} & TL \\ 
		rat783 & 783 &  60 & 74.33 & 32.64 & TL & 161 & 61.40 & \textbf{54.92} 
		& TL & 2864 & \textbf{60.02} & 1617.55 \\ 
		rat783 & 783 &  70 & 65.37 & 31.94 & TL & 70 & 59.03 & \textbf{50.25} & 
		TL & 2403 & \textbf{55.44} & 1642.05 \\ 
		rat783 & 783 &  80 & 60.61 & 31.34 & TL & 91 & 56.14 & \textbf{46.10} & 
		TL & 3000 & \textbf{51.66} & 1420.24 \\ 
		rat783 & 783 &  90 & 56.04 & 30.95 & TL & 70 & 50.49 & \textbf{43.09} & 
		TL & 4618 & \textbf{48.47} & 1211.55 \\ 
		rat783 & 783 & 100 & 53.14 & 30.47 & TL & 65 & 47.76 & \textbf{40.36} & 
		TL & 4175 & \textbf{45.88} & 1019.60 \\ 
		pr1002 & 1002 &  10 & 6435.06 & 1251.44 & TL & 0 & \textbf{5202.16} & 
		\textbf{5202.16} & \textbf{107.49} & 0 & 5331.28 & TL \\ 
		pr1002 & 1002 &  20 & 4606.52 & 1169.60 & TL & 0 & \textbf{3170.57} & 
		\textbf{3170.57} & \textbf{135.59} & 44 & 3290.14 & TL \\ 
		pr1002 & 1002 &  30 & 3431.11 & 1073.20 & TL & 0 & \textbf{2631.54} & 
		\textbf{2502.77} & TL & 2470 & 2644.33 & TL \\ 
		pr1002 & 1002 &  40 & 2983.29 & 1042.25 & TL & 0 & \textbf{2210.20} & 
		\textbf{2140.22} & TL & 3850 & 2304.89 & TL \\ 
		pr1002 & 1002 &  50 & 2562.23 & 996.93 & TL & 11 & 2015.56 & 
		\textbf{1841.90} & TL & 3006 & \textbf{2013.08} & TL \\ 
		pr1002 & 1002 &  60 & 2241.09 & 952.64 & TL & 0 & 1874.17 & 
		\textbf{1681.42} & TL & 3965 & \textbf{1838.48} & TL \\ 
		pr1002 & 1002 &  70 & 2015.56 & 935.16 & TL & 15 & 1732.77 & 
		\textbf{1507.48} & TL & 2889 & \textbf{1710.26} & TL \\ 
		pr1002 & 1002 &  80 & 1860.78 & 899.33 & TL & 3 & 1565.25 & 
		\textbf{1391.70} & TL & 3657 & \textbf{1518.22} & TL \\ 
		pr1002 & 1002 &  90 & 1718.28 & 862.03 & TL & 9 & \textbf{1431.36} & 
		\textbf{1283.52} & TL & 5200 & 1442.22 & TL \\ 
		pr1002 & 1002 & 100 & 1569.24 & 842.84 & TL & 9 & 1414.21 & 
		\textbf{1208.88} & TL & 5803 & \textbf{1353.70} & TL \\ 
		rl1323 & 1323 &  10 & 8524.65 & 1470.07 & TL & 0 & \textbf{6229.60} & 
		\textbf{6193.80} & TL & 0 & 6313.82 & TL \\ 
		rl1323 & 1323 &  20 & 5699.21 & 1041.99 & TL & 0 & \textbf{3845.66} & 
		\textbf{3832.86} & TL & 440 & 4032.83 & TL \\ 
		rl1323 & 1323 &  30 & 3992.96 & 973.42 & TL & 0 & 3906.16 & 
		\textbf{2984.30} & TL & 111 & \textbf{3204.16} & TL \\ 
		rl1323 & 1323 &  40 & 3375.04 & 923.26 & TL & 0 & \textbf{2652.14} & 
		\textbf{2502.04} & TL & 388 & 2774.72 & TL \\ 
		rl1323 & 1323 &  50 & 2963.63 & 913.79 & TL & 0 & \textbf{2308.32} & 
		\textbf{2198.20} & TL & 766 & 2430.27 & TL \\ 
		rl1323 & 1323 &  60 & 2505.42 & 848.05 & TL & 0 & 2495.02 & 
		\textbf{1947.60} & TL & 407 & \textbf{2149.14} & TL \\ 
		rl1323 & 1323 &  70 & 2317.57 & 876.96 & TL & 0 & \textbf{1918.35} & 
		\textbf{1778.52} & TL & 1047 & 1997.22 & TL \\ 
		rl1323 & 1323 &  80 & 2144.00 & 863.91 & TL & 0 & 1973.72 & 
		\textbf{1646.13} & TL & 1105 & \textbf{1842.10} & TL \\ 
		rl1323 & 1323 &  90 & 2025.01 & 835.69 & TL & 0 & 1751.21 & 
		\textbf{1530.92} & TL & 1031 & \textbf{1745.58} & TL \\ 
		rl1323 & 1323 & 100 & 1890.05 & 811.18 & TL & 0 & 1624.22 & 
		\textbf{1429.61} & TL & 1185 & \textbf{1620.92} & TL \\ 
		\bottomrule
	\end{tabular}
	\endgroup
\end{table}

In Table \ref{ta:pmed} we provide a comparison with the local search of 
\cite{mousavi2023exploiting}. The runs in \cite{mousavi2023exploiting} were 
made on an Intel Core i5-6200 with 2.3 GHz CPU and 8 GB of RAM. We note that 
\cite{mousavi2023exploiting} presents runtimes for different version of their 
developed heuristics, in the table we show the fastest runtime and the best 
objective function value found by the heuristics.
The results show that \bhvsl can solve all instances to optimality under one 
minute, while for two of the instances the heuristics of 
\cite{mousavi2023exploiting} do not manage to find the optimal solution. 
Similar to the instance set \texttt{TSPLIB}, the setting \ahavl performs worse 
than \bhvsl.

\begin{table}[tbh]
	\centering
	\caption{Detailed results for instance set \texttt{pmed} \blue{with 
	$\alpha=2$}. \label{ta:pmed}} 
	\begingroup\footnotesize
	\begin{tabular}{lrr|rrrr|rrrr|rr}
		\toprule
		& & & \multicolumn{4}{|c|}{$\ahavl$} & \multicolumn{4}{|c|}{$\bhvsl$} & 
		\multicolumn{2}{|c}{\cite{mousavi2023exploiting}} \\ name & $|N|$ & $p$ 
		& UB & LB & t[s] & nBC & UB & LB & t[s] & nBC & UB & t[s] \\ \midrule
		pmed1 & 100 &   5 & \textbf{150} & \textbf{150} & 24.16 & 103 & 
		\textbf{150} & \textbf{150} & \textbf{0.34} & 0 & \textbf{150} & 0.01 
		\\ 
		pmed2 & 100 &  10 & \textbf{121} & \textbf{121} & 20.73 & 12 & 
		\textbf{121} & \textbf{121} & \textbf{0.31} & 0 & \textbf{121} & 0.20 
		\\ 
		pmed3 & 100 &  10 & \textbf{121} & \textbf{121} & 55.80 & 325 & 
		\textbf{121} & \textbf{121} & \textbf{0.54} & 32 & \textbf{121} & 0.26 
		\\ 
		pmed4 & 100 &  20 & \textbf{ 97} & \textbf{ 97} & 51.19 & 545 & 
		\textbf{ 97} & \textbf{ 97} & \textbf{0.91} & 345 & \textbf{ 97} & 8.19 
		\\ 
		pmed5 & 100 &  33 & \textbf{ 63} & \textbf{ 63} & 4.83 & 0 & \textbf{ 
		63} & \textbf{ 63} & \textbf{0.23} & 0 & \textbf{ 63} & 0.02 \\ 
		pmed6 & 200 &   5 & \textbf{ 99} & \textbf{ 99} & 1009.08 & 2451 & 
		\textbf{ 99} & \textbf{ 99} & \textbf{0.37} & 0 & \textbf{ 99} & 0.03 
		\\ 
		pmed7 & 200 &  10 &  85 &  74 & TL & 4207 & \textbf{ 80} & \textbf{ 80} 
		& \textbf{0.91} & 7 & \textbf{ 80} & 0.09 \\ 
		pmed8 & 200 &  20 & \textbf{ 70} &  66 & TL & 3839 & \textbf{ 70} & 
		\textbf{ 70} & \textbf{1.10} & 63 & \textbf{ 70} & 0.03 \\ 
		pmed9 & 200 &  40 & \textbf{ 49} & \textbf{ 49} & 1725.43 & 3205 & 
		\textbf{ 49} & \textbf{ 49} & \textbf{0.89} & 54 & \textbf{ 49} & 0.73 
		\\ 
		pmed10 & 200 &  67 & \textbf{ 28} & \textbf{ 28} & 22.72 & 7 & \textbf{ 
		28} & \textbf{ 28} & \textbf{0.51} & 0 & \textbf{ 28} & 0.62 \\ 
		pmed11 & 300 &   5 &  73 &  55 & TL & 1793 & \textbf{ 68} & \textbf{ 
		68} & \textbf{0.59} & 0 & \textbf{ 68} & 0.00 \\ 
		pmed12 & 300 &  10 &  72 &  53 & TL & 1851 & \textbf{ 60} & \textbf{ 
		60} & \textbf{1.16} & 0 & \textbf{ 60} & 0.27 \\ 
		pmed13 & 300 &  30 &  47 &  41 & TL & 1760 & \textbf{ 43} & \textbf{ 
		43} & \textbf{2.38} & 70 & \textbf{ 43} & 2.07 \\ 
		pmed14 & 300 &  60 &  38 &  33 & TL & 3177 & \textbf{ 34} & \textbf{ 
		34} & \textbf{2.99} & 148 & \textbf{ 34} & 0.93 \\ 
		pmed15 & 300 & 100 &  24 & \textbf{ 23} & TL & 4287 & \textbf{ 23} & 
		\textbf{ 23} & \textbf{1.85} & 156 & \textbf{ 23} & 6.86 \\ 
		pmed16 & 400 &   5 &  56 &  46 & TL & 894 & \textbf{ 52} & \textbf{ 52} 
		& \textbf{0.94} & 0 & \textbf{ 52} & 0.24 \\ 
		pmed17 & 400 &  10 &  56 &  39 & TL & 499 & \textbf{ 45} & \textbf{ 45} 
		& \textbf{3.00} & 48 & \textbf{ 45} & 0.04 \\ 
		pmed18 & 400 &  40 &  44 &  32 & TL & 903 & \textbf{ 34} & \textbf{ 34} 
		& \textbf{4.25} & 75 & \textbf{ 34} & 17.76 \\ 
		pmed19 & 400 &  80 &  29 &  23 & TL & 1242 & \textbf{ 24} & \textbf{ 
		24} & \textbf{12.88} & 836 &  25 & 0.17 \\ 
		pmed20 & 400 & 133 &  22 &  18 & TL & 1941 & \textbf{ 19} & \textbf{ 
		19} & \textbf{3.88} & 273 & \textbf{ 19} & 1.24 \\ 
		pmed21 & 500 &   5 &  59 &  34 & TL & 310 & \textbf{ 45} & \textbf{ 45} 
		& \textbf{1.96} & 4 & \textbf{ 45} & 1.20 \\ 
		pmed22 & 500 &  10 &  52 &  34 & TL & 247 & \textbf{ 44} & \textbf{ 44} 
		& \textbf{4.22} & 10 & \textbf{ 44} & 0.42 \\ 
		pmed23 & 500 &  50 &  36 &  25 & TL & 399 & \textbf{ 27} & \textbf{ 27} 
		& \textbf{7.08} & 52 & \textbf{ 27} & 11.18 \\ 
		pmed24 & 500 & 100 &  23 & \textbf{ 19} & TL & 511 & \textbf{ 19} & 
		\textbf{ 19} & \textbf{27.88} & 1081 &  20 & 0.54 \\ 
		pmed25 & 500 & 167 &  19 & \textbf{ 15} & TL & 559 & \textbf{ 15} & 
		\textbf{ 15} & \textbf{15.84} & 1501 & \textbf{ 15} & 33.68 \\ 
		pmed26 & 600 &   5 &  57 &  35 & TL & 202 & \textbf{ 43} & \textbf{ 43} 
		& \textbf{2.21} & 0 & \textbf{ 43} & 0.24 \\ 
		pmed27 & 600 &  10 &  44 &  29 & TL & 198 & \textbf{ 36} & \textbf{ 36} 
		& \textbf{3.42} & 0 & \textbf{ 36} & 0.09 \\ 
		pmed28 & 600 &  60 &  28 &  21 & TL & 199 & \textbf{ 22} & \textbf{ 22} 
		& \textbf{7.46} & 30 & \textbf{ 22} & 0.59 \\ 
		pmed29 & 600 & 120 &  22 &  16 & TL & 301 & \textbf{ 17} & \textbf{ 17} 
		& \textbf{11.25} & 78 & \textbf{ 17} & 0.32 \\ 
		pmed30 & 600 & 200 &  17 & \textbf{ 13} & TL & 494 & \textbf{ 13} & 
		\textbf{ 13} & \textbf{11.08} & 500 & \textbf{ 13} & 2.89 \\ 
		pmed31 & 700 &   5 &  47 &  28 & TL & 0 & \textbf{ 34} & \textbf{ 34} & 
		\textbf{3.26} & 0 & \textbf{ 34} & 0.05 \\ 
		pmed32 & 700 &  10 &  46 &  26 & TL & 0 & \textbf{ 33} & \textbf{ 33} & 
		\textbf{5.59} & 3 & \textbf{ 33} & 0.21 \\ 
		pmed33 & 700 &  70 &  24 &  17 & TL & 131 & \textbf{ 19} & \textbf{ 19} 
		& \textbf{13.94} & 40 & \textbf{ 19} & 10.28 \\ 
		pmed34 & 700 & 140 &  18 &  13 & TL & 158 & \textbf{ 14} & \textbf{ 14} 
		& \textbf{54.78} & 981 & \textbf{ 14} & 97.77 \\ 
		pmed35 & 800 &   5 &  43 &  26 & TL & 0 & \textbf{ 34} & \textbf{ 34} & 
		\textbf{4.37} & 0 & \textbf{ 34} & 0.54 \\ 
		pmed36 & 800 &  10 &  49 &   0 & TL & 0 & \textbf{ 31} & \textbf{ 31} & 
		\textbf{9.74} & 3 & \textbf{ 31} & 0.25 \\ 
		pmed37 & 800 &  80 &  24 &  17 & TL & 25 & \textbf{ 18} & \textbf{ 18} 
		& \textbf{35.20} & 210 &  19 & 0.12 \\ 
		pmed38 & 900 &   5 &  54 &  23 & TL & 0 & \textbf{ 33} & \textbf{ 33} & 
		\textbf{7.89} & 0 & \textbf{ 33} & 0.09 \\ 
		pmed39 & 900 &  10 &  39 &  21 & TL & 0 & \textbf{ 26} & \textbf{ 26} & 
		\textbf{11.40} & 13 & \textbf{ 26} & 0.18 \\ 
		pmed40 & 900 &  90 &  22 &  14 & TL & 0 & \textbf{ 16} & \textbf{ 16} & 
		\textbf{19.91} & 44 & \textbf{ 16} & 3.04 \\ 
		\bottomrule
	\end{tabular}
	\endgroup
\end{table}
\section{Conclusions} 
In this work, we present two integer programming formulations for the discrete 
version of the $\alpha$-neighbor $p$-center problem (\APCP), which is an 
emerging variant of the classical discrete $p$-center problem (\PCP), which 
recently 
got attention in literature. We also present lifting procedures for 
inequalities in the formulations, valid 
inequalities, 
optimality-preserving inequalities and variable fixing procedures. We provide 
theoretical results on the strength of the formulations and convergence results 
for the lower bounds obtained after applying the lifting procedures or the 
variable fixing procedures in an iterative fashion. These results extend 
results obtained by \citet{elloumi2004} and \citet{gaar2022scaleable} for the 
\PCP. 
Based on these results we provide two branch-and-cut algorithms, namely one 
based on each of the two formulations. 

We assess the efficacy of our branch-and-cut algorithms in a computational 
study on instances from the literature. The results show that our exact 
algorithms 
outperforms existing algorithms for the \APCP. These existing algorithms are 
heuristics, 
namely a GRASP by \citet{sanchez2022grasp} and a local search by 
\citet{mousavi2023exploiting}. Our algorithms manage to solve 116 of 194 
instances from literature to proven optimality within a time limit of 1800 
seconds, in fact many of them are solved to optimality within 60 seconds. They 
also provide improved best solution values for 116 instances from literature. 
Note that these 116 instances are not the same instances as the instances where 
optimality is proven, as for some of the latter instances the existing 
heuristics already manage to find the optimal solution (but of course can not 
prove optimality, as they are heuristics).

There are various directions for further work. One direction could be to try to 
derive 
further valid inequalities. In particular it could be interesting to 
investigate if there are inequalities which ensure that the best possible 
bounds 
of both formulations coincide, i.e., if the second formulation can be 
further strengthened, as our current results show that the best bound of the 
first formulation could be better for some instances. 
Another interesting avenue could be the development of a 
projection-based approach similar to the one of \citet{gaar2022scaleable} for 
the \PCP, in which a lower number of variables suffices to model the problem 
and which is therefore better suited for large scale instances.  

Furthermore, trying to extend the approaches 
including the lifting schemes to other variants of the \PCP such as robust 
versions (see, e.g., \citep{lu2013robust}), capacitated versions (see, e.g., 
\citep{scaparra2004large}) or the 
$p$-next center problem (see, 
e.g., 
\citep{lopez2019grasp}) could be fruitful. Moreover, while we managed to 
improve 
many of the 
best known solution values for the instances from literature, there are also 
some instances where the existing heuristic work better. Thus further 
developments of heuristics can also be interesting, including matheuristics 
such as local branching (see, e.g., \citep{fischetti2003local}) which could 
exploit our 
formulations.

\label{sec:conclusion}

\section*{Acknowledgments}

This research was funded in whole, or in part, by the Austrian Science Fund 
(FWF)
[P 35160-N]. For the purpose of open access, the author has applied a CC BY 
public 
copyright licence to any Author Accepted Manuscript version arising from this submission. It is also supported by the Johannes Kepler University Linz, Linz Institute
of Technology (Project LIT-2021-10-YOU-216).

\clearpage

\bibliographystyle{elsarticle-harv}
\bibliography{biblio}

\appendix
\newpage
\blue{
\section{Formulations for the \PCP from the literature \label{sec:app1}}

A formal definition of the \PCP is as follows. Given an integer $p$, a 
set of customer 
demand points $\cus$ with cardinality $|\cus|=n$, a set of potential facility 
locations $\loc$ of cardinality $|\loc|=m \geq p$ and a distance $d_{ij}$ from 
a 
customer 
demand point $i$ to the potential facility location $j$ for every $i \in \cus$ 
and 
$j \in \loc$,
find a subset $S \subseteq \loc$ with cardinality $|S|=p$ of facilities to 
\emph{open} such that the maximum 
distance between a customer demand point and its closest 
open facility is minimized, i.e., such that 
$\max_{i \in \cus} \min_{j \in S} \{d_{ij} \}$ is minimized. \green{We note 
that in the \APCP, we have $N=I=J$ by definition of the problem. This is 
necessary as the set of demand points (i.e., customers) in the \APCP depends on 
a given feasible solution and is defined 
as all points where no facility is opened in the solution. Due to this 
difference, slightly 
modified definitions of $D$ and $D_i$ \orange{are necessary for the \PCP below 
as compared to the ones of \APCP above.}}

Let the binary 
variables $y_j$ for all $j\in \loc$ indicate whether a facility 
is 
opened at location $j$. Let the binary variables $x_{ij}$ for all $i\in \cus, j 
\in \loc$ indicate whether the 
customer $i \in \cus$ 
is assigned to the open facility $j$. Let the continuous variables $z$ measure 
the 
distance in the objective function.
The classical textbook formulation \elli{of the \PCP} (see for 
example~\citet{daskin2013network}) is as follows.
\begin{subequations}
	\begin{alignat}{3}
	\mytag{PC1} \qquad
	& \min & \obj \phantom{iiiii} \label{xxx:pc1:z}  \\ 
	& \st~ &  \sum_{j \in \loc} y_j &= p \label{xxx:pc1:sumy} \\       
	&& \sum_{j \in \loc} x_{ij} & =  1 && \forall i \in \cus 
	\label{xxx:pc1:sumx} \\
	&& x_{ij} &\leq y_j && \forall i \in \cus, \forall j \in 
	\loc\label{xxx:pc1:xy}\\   
	&& \sum_{j \in \loc} d_{ij} x_{ij} & \leq  \obj && \forall i \in \cus 
	\label{pc1:sumdx}\\
	&& x_{ij} &\in  \{0,1\} \qquad&& \forall i \in \cus, \forall j \in \loc  
	\label{xxx:pc1:xbin}\\
	&& y_{j} &\in  \{0,1\} && \forall j \in \loc  \label{xxx:pc1:ybin}\\
	&& \obj & \in \mathbb{R}
	\end{alignat}
\end{subequations}


In \cite{elloumi2004} another 
formulation was introduced: 
let $D=\{d_{ij}: i \in 
\cus, j \in \loc\}$  denote the set of all possible distances 
and let $d_1$, $\ldots$, $d_{K}$ be the values contained in $D$, so 
$D=\{d_1, \ldots, d_{K}\}$. 
Furthermore there is a binary variable for each 
value in $D$ that indicates whether the optimal value of 
\elli{the} \PCP is less or equal than this value. 
Towards this end let  $u_k = 0$ if all customers have an open facility with 
distance at most $d_{k-1}$, otherwise $u_k = 1$ for all 
$k \in \{2, \dots, K\}$. \elli{Then the} formulation reads as follows.
\begin{subequations}
	\begin{alignat}{3}
	\mytag{PCE} \qquad
	& \min & d_1 + \sum_{k=2}^{K} (d_k &- d_{k-1})u_k  
	\label{xxx:pcE:ojb}  
	\\ 
	& \st~ &  \sum_{j \in \loc} y_j &\leq p \label{xxx:pcE:sumyp} \\       
	&&\sum_{j \in \loc} y_j &\geq 1 \label{xxx:pcE:sumy1} \\  
	&& u_k + \sum_{j\colon d_{ij} < d_k} y_{j} &\geq  1 && \forall i \in \cus, 
	\forall k \in \{2, \dots, K\} \label{xxx:pcE:sumyu} \\
	&& u_{k} &\in  \{0,1\} \qquad&& \forall  k \in \{2, \dots, 
	K\}\label{xxx:pcE:ubin}\\
	&& y_{j} &\in  \{0,1\} && \forall j \in \loc  
	\end{alignat}
\end{subequations}

\green{Let $D_i = \{d_{ij}: j \in \loc \}\setminus \{d_1\}$ for $i \in \cus$.}
The modified variant of \myref{PCE} proposed by \citet{ales2018} is as
follows.
\begin{subequations}
	\begin{alignat}{3}
\mytag{PCA} \qquad
& \min & d_1 + \sum_{k=2}^{K} (d_k &- d_{k-1})u_k  \label{pcA:ojb}  \\ 
& \st~ &  \sum_{j \in \loc} y_j &\leq p \label{pcA:sumyp} \\       
&&\sum_{j \in \loc} y_j &\geq 1 \label{pcA:sumy1} \\ 
&& u_{k-1} &\geq u_{k} && \forall k \in \{3, \dots, K\} \label{pcA:ulink} 
\\  
&& u_k + \sum_{j \in \loc :  d_{ij} < d_k} 
y_{j} &\geq 1\qquad && \forall i \in \cus, 
\forall d_k \in D_i \elli{\cup \{K\}} \label{pcA:sumyu} \\
&& u_{k} &\in  \{0,1\} \qquad&& \forall  k \in \{2, \dots, K\} 
\label{pcA:ubin}	\\
&& y_{j} &\in  \{0,1\} && \forall j \in \loc
\label{pcA:ybin}
\end{alignat}
\end{subequations}
}

\end{document}